\documentclass{article}

\usepackage{PRIMEarxiv}
\usepackage[numbers,sort&compress]{natbib}
\usepackage[utf8]{inputenc} 
\usepackage[T1]{fontenc}    
\usepackage{hyperref}       
\usepackage{url}            
\usepackage{booktabs}       
\usepackage{amsfonts}       
\usepackage{nicefrac}       
\usepackage{microtype}      
\usepackage{lipsum}
\usepackage{fancyhdr}       
\usepackage{graphicx}       
\usepackage{amsmath}
\usepackage{lyu}
\usepackage{amsthm}
\usepackage{amssymb}
\usepackage{xcolor}
\usepackage{subfigure}
\newtheorem{theorem}{Theorem}[section]

\newtheorem{lemma}[theorem]{Lemma}
\newtheorem{proposition}[theorem]{Proposition}
\theoremstyle{definition}

\newtheorem{assumption}{Assumption}
\newtheorem{remark}[theorem]{Remark}

\title{
MVNN: A Measure-Valued Neural Network for Learning McKean-Vlasov Dynamics from Particle Data
}

\author{
  Liyao Lyu\thanks{Equal contribution.}\\
  Department of Mathematics \\
  University of California, Los Angeles \\
  Los Angeles, CA 90024, USA\\
  \texttt{lyuliyao@math.ucla.edu} \\
  \AND
   Xinyue Yu\footnotemark[1] \\
  Department of Mathematics \\
  University of California, Los Angeles \\
  Los Angeles, CA 90024, USA\\
  \texttt{tracy@math.ucla.edu} \\
  \AND
   Hayden Schaeffer  \\
  Department of Mathematics \\
  University of California, Los Angeles \\
  Los Angeles, CA 90024, USA\\
  \texttt{hayden@math.ucla.edu} \\
}

\begin{document}
\maketitle

\begin{abstract}
Collective behaviors that emerge from interactions are fundamental to numerous biological systems. To learn such interacting forces from observations, we introduce a measure-valued neural network that infers measure-dependent interaction (drift) terms directly from particle-trajectory observations. The proposed architecture generalizes standard neural networks to operate on probability measures by learning cylindrical features, using an embedding network that produces scalable distribution-to-vector representations. On the theory side, we establish well-posedness of the resulting dynamics and prove propagation-of-chaos for the associated interacting-particle system. We further show universal approximation and quantitative approximation rates under a low-dimensional measure-dependence assumption. Numerical experiments on first and second order systems, including deterministic and stochastic Motsch-Tadmor dynamics, two-dimensional attraction-repulsion aggregation, Cucker-Smale dynamics, and a hierarchical multi-group system, demonstrate accurate prediction and strong out-of-distribution generalization.
\end{abstract}

\keywords{mean field approximation\and interacting particle models \and data-driven modeling \and McKean-Valsov SDE \and Wasserstein space \and approximation theory}

\section{Introduction}
Interacting particle and agent systems are widely used to model collective dynamics in physics, biology, and the social sciences~\citep{carrillo2017review, albi2019vehicular, kolokolnikov2011stability, vicsek2012collective, shoham2008multiagent}. Since first-principles approaches to modeling interacting forces are often challenging, a key problem is to infer the underlying interaction mechanisms from trajectory observations. Recent data-driven methods address this by assuming a binary-interaction ansatz and estimating a pairwise interaction kernel via least-squares fitting of observed (finite-difference) velocities as a function of pairwise distances~\citep{lu2019nonparametric, liu2023random, lang2022learning, gu2023data, chu2024inference, maggioni2021learning}. However, the pairwise assumption is often insufficient for complex, realistic systems. In many biological, social, and physical scenarios, the effective dynamics are governed by a drift of the mean field form, a nonlinear functional of the population distribution, rather than a superposition of two-body forces. Popular examples include crowd dynamics governed by local density constraints rather than individual repulsion~\citep{maury2010macroscopic}, vehicular traffic where speed is a nonlinear functional of the flow density~\citep{lighthill1955kinematic}, and cell migration influenced by chemical concentration fields~\citep{keller1970initiation}. Restricting the hypothesis class to pairwise interaction kernels can therefore limit the ability to capture emergent collective behaviors.
Beyond modeling fidelity, computational efficiency is another major difficulty since direct simulation of interacting particle systems typically scales as \(O(N^2)\) in the number of particles. Mean-field limits and related numerical techniques, such as the random batch method~\citep{jin2020random,gan2025random}, provide scalable approximations for large populations~\citep{kadanoff2009more,golse2003mean,spohn2012large} and have been used in aggregation and swarming, Motsch-Tadmor dynamics, and optimization~\citep{bresch2023mean,carrillo2010particle,carrillo2010asymptotic,albi2014boltzmann,motsch2014heterophilious,carrillo2018analytical,carrillo2021consensus,huang2022mean,lyu2025consensus}. This motivates the development of drift models whose evaluation scales linearly with \(N\), by utilizing the symmetry arising from agent indistinguishability.

In recent years, machine learning and optimization-based methods have attracted growing attention for discovering and modeling governing equations of ODEs~\citep{schmidt2009distilling, chen2018neural,brunton2016discovering, schaeffer2017sparse, 
schaeffer2018extracting}, SDEs~\citep{xie2024ab,lyu2023construction,ge2024data,chen2024learning,liu2025training,yeo2025model}, and PDEs~\citep{zheng2025sindy,lu2021learning,lifourier,schaeffer2013sparse,long2018pde,churchill2023dnn, sun2020neupde, schaeffer2017learning, schaeffer2020extracting, messenger2021weak}; see also~\cite{chen2025due}. Extending these techniques to mean-field limits remains challenging because the drift is an operator on the space of probability measures rather than a function of finite-dimensional states. Neural operators such as DeepONet~\citep{lu2021learning} and FNO~\citep{lifourier} provide powerful tools for learning mappings between function spaces. More broadly, recent developments include graph neural operators~\citep{li2020multipole}, kernel-based operator learning frameworks~\citep{batlle2024kernel, yu2025regularized}, discretization-free operators~\citep{zhang2023belnet}, and attention-based operator models~\citep{sun2025towards}, further advancing operator learning in infinite-dimensional settings. However, these approaches are typically designed for function-valued inputs defined on structured domains, and are not inherently suited to the Wasserstein setting considered here, where the input is a discrete empirical measure represented as an unordered point cloud.
The emerging topic of PDE foundation models have been proposed to learn unified latent representations across multiple families of PDE datasets in a single, shared framework \citep{zhu2025pi, weihs2025deep, hmida2026compno, negrini2025multimodal, ye2024pdeformer, liu2024prose, cao2024vicon, morel2025disco, yang2023context, jollie2024time, liu2024prosefd, sun2025towards, zhou2024unisolver,ye2025pdeformer}. By capturing common structure across a wide range of systems, these models enable improved generalization to unseen regimes and more robust transfer across tasks. While these methods are applicable to a wide range of PDEs \citep{sun2025towards}, their utilization for particle dynamics is limited. To the best of our knowledge, there are currently no end-to-end frameworks that can infer such measure-dependent drifts directly from particle trajectories while providing theoretical guarantees. To address these challenges, we propose a data-driven framework to learn the measure-dependent drift in mean-field dynamics directly from particle observation.

Unlike ordinary or stochastic differential equations, where the evolution of each particle depends explicitly on its relative positions or pairwise distances, the evolution of particles (or agents) in the mean-field limit depends only on the distribution of the system. The resulting dynamics take the form of a McKean–Vlasov stochastic differential equation for particles $(\bX_t)$:
\[
    \intd \bX_t = \bb(\bX_t,f_t) \intd t + \sigma(\bX_t,f_t)\intd \mathbf{B}_t, 
\]
where $f_t = \mathcal L(\bX_t)$ describes the law of the $\bX_t$ and $\mathbf{B}_t$ is a $d\mhyphen$dimensional Wiener process.
Numerically, the McKean–Vlasov dynamics can be approximated by a system of interacting particles 
 $(\bX_t^{i,N})_{1\leq i\leq N}$ with a sufficiently large number of particles $N$. Formally, the initial condition \(
(\bX_0^{i,N})_{1\leq i \leq N} 
\) are independent and identically distributed with law $\mu_0$ and each particle evolves according to: \[
\intd \bX^{i,N}_t = \bb(\bX^{i,N}_t,\mu^N_t)\intd t + \sigma(\bX^{i,N}_t,\mu^N_t)\intd \mathbf{B}^{i,N}_t
\]
where $\mathbf{B}_t^{i,N}$ are  $d\mhyphen$dimensional Wiener processes, \(\mu_t^N = \frac{1}{N} \sum_{j=1}^N \delta_{\bX^{j,N}_t}\) denotes the empirical measure of the particle system, and $\delta$ is the Dirac measure.
For simplicity, we only focus on the drift term $\bb:\mathbb R ^d \times \mathcal P(\mathbb R^d) \to \mathbb R^d$ in this work, where $\mathcal P (\mathbb R^d)$ is the space of probability measures on $\mathbb R^d$. The diffusion term $\sigma$ is treated as a known constant or omitted in the present work. Its extension to state- and measure-dependent cases will be explored in future work.

To represent such measure-dependent functions, we propose a \emph{measure-valued neural network} (MVNN), which extends conventional neural networks to take probability measures as inputs in a permutation-invariant manner. The architecture is motivated by the cylindrical functional framework~\citep{guo2023ito,Protter2005}, which characterizes functionals on spaces of probability measures via finitely many test-function integrals. We generalize this idea by learning the test functions and their subsequent interaction map with neural networks, yielding a scalable model that preserves permutation invariance and naturally extends to multi-group interactions.

We summarize our contributions as follows:
\begin{itemize}
    \item  
    We propose MVNN, a permutation-invariant architecture for learning measure-dependent drifts $b(X,\mu)$ directly from particle-trajectory data. In this setting, permutation-invariant means that the representation depends only on the empirical measure, i.e., on the unordered collection of particle states, so reordering the particles does not change the aggregated feature or the resulting drift.
    \item We establish well-posedness of the neural network induced McKean-Vlasov dynamics and prove propagation-of-chaos guarantees for the corresponding learned particle system.
    \item We prove a universal approximation result for MVNNs on $\mathbb R^d \times \mathcal P_2(\mathbb R^d)$ and establish approximation rates under a low-dimensional (order-parameter) measure-dependence assumption.
    \item We demonstrate accurate forward simulations and out-of-distribution generalization on first and second order systems, including deterministic/stochastic Motsch-Tadmor dynamics, 2D attraction-repulsion aggregation, Cucker-Smale dynamics, and a hierarchical multi-group system.
\end{itemize}

\section{Learning McKean–Vlasov Drifts from Particle Data}
We consider a standard interacting particle/agent system as a motivating example. Note that the learning framework developed here does not rely on the pairwise interaction kernel and can be applied to more complex interactions. Consider $N$ agents with states
$\bX_t^i\in\mathbb R^d$ and the (possibly stochastic) dynamics:
\begin{equation}\label{eq:micro_sde}
\mathrm d \bX_t^i
= \frac{1}{N}\sum_{j=1}^N \phi\!\left(\|\bX_t^j-\bX_t^i\|\right)\big(\bX_t^j-\bX_t^i\big)\,\mathrm dt
+ \sigma\, \mathrm d\mathbf B_t^i,\qquad i=1,\ldots,N,
\end{equation}
where $\|\cdot\|$ is the Euclidean norm, $\phi:\mathbb R_+\to\mathbb R$ is the pairwise interaction kernel,
and $\{\mathbf B_t^i\}_{i=1}^N$ are independent $d$-dimensional Wiener processes (the deterministic case corresponds to $\sigma=0$).
Let $\bX_t=(\bX_t^1,\ldots,\bX_t^N)$ and denote the empirical measure by:
\[
\mu_t^N := \frac{1}{N}\sum_{j=1}^N \delta_{\bX_t^j}.
\]
In the mean-field limit, such interactions can be represented through a drift $\bb(\bx,\mu)$ acting on a single state $\bx$
and the population distribution $\mu$ ~\citep{sznitman2006topics}. For instance, following Section 2.2.1 in ~\cite{chaintron2021propagation}, for the pairwise model in (1), the corresponding drift is
\[
\bb(\bx,\mu)=\int \phi(\|\by-\bx\|)(\by-\bx)\,\mu(\intd \by).
\]
In order to infer the drift, rather than deriving it from explicit particle-level interactions, we instead rely on observations of the agents, which are typically more accessible in practical settings.
We observe $M$ independent trajectories, at discrete time instances $0 = t_0 < t_1 < \cdots < t_L = T$:
\[
\bX_{\mathrm{tr}} := \{\bX_{t_\ell,m}^{i}\}_{i=1,\ldots,N;\,\ell=0,\ldots,L;\,m=1,\ldots,M},
\]
and calculate the corresponding velocities by first-order finite differences:
\[
\bV_{t_\ell,m}^{i} := \frac{\bX_{t_{\ell+1},m}^{i}-\bX_{t_\ell,m}^{i}}{t_{\ell+1}-t_\ell},
\qquad \ell = 0,\ldots,L-1.
\]
Each trajectory, indexed by \(m\), is initialized by independent and identically distributed samples drawn from an (a priori unknown) initial distribution \(\mu_{0,m}\).
Our goal is to infer a measure-dependent drift
\(
\bb:\mathbb R ^d \times \mathcal P_2(\mathbb R^d) \to \mathbb R^d
\)
directly from trajectory data, where $\mathcal P_2(\mathbb R^d)$ denotes the set of probability measures with finite
second moment. In this work, we focus on learning the drift term; the diffusion coefficient is treated as constant (or zero).
Extensions to state- and measure-dependent diffusion are left for future work.

\subsection{Measure-Valued Neural Network}
One way to represent functionals on Wasserstein space is via cylindrical dependence, i.e., a functional depending on a measure only through finitely many test-function integrals. Concretely, a cylindrical functional has the following form:
\[
\mu \ \longmapsto\  f\bigl(\langle g_1,\mu\rangle,\ldots,\langle g_n,\mu\rangle\bigr),
\]
for some test functions $g_i$ and a finite-dimensional map $f$ (e.g.\ polynomial/smooth), see, e.g.,~\cite{guo2023ito,Protter2005}.
On compact subsets of Wasserstein space, such cylindrical classes are dense in appropriate smoothness spaces; see Lemma~3.12 and
Definition~2.4 in~\cite{guo2023ito} for a representative result.
The key computational advantage is that $\langle g,\mu_t^N\rangle$ can be evaluated by averaging over particles, which is permutation invariant and scales linearly in $N$. Motivated by this formulation, we approximate the drift by a composition of two neural networks:
(1) an embedding network and (2) an interaction network. We define the MVNN drift as
\begin{equation}\label{eq:mvnn-drift}
\bb_\theta(\bx, \mu)
:=
\varphi_\mathrm{int}\!\left(\bx,\ \big\langle \varphi_\mathrm{emb}(\cdot; \theta_\mathrm{emb}),\, \mu \big\rangle;\ \theta_\mathrm{int}\right),
\end{equation}
where $\mu \in \mathcal{P}_2(\mathbb{R}^d)$,
\(
\varphi_\mathrm{emb}(\cdot; \theta_\mathrm{emb}): \mathbb{R}^d \to \mathbb{R}^k
\)
extracts feature representations, and
\(
\varphi_\mathrm{int}(\cdot,\cdot;\theta_\mathrm{int}): \mathbb{R}^d \times \mathbb{R}^k \to \mathbb{R}^d
\)
maps the local state and the global (measure) feature to the drift.
Here $\langle\cdot,\cdot\rangle$ denotes integration with respect to $\mu$:
\[
\langle \varphi_{\mathrm{emb}},\mu\rangle
:= \int_{\mathbb{R}^d} \varphi_{\mathrm{emb}}\left(\bx;\theta_{\mathrm{emb}}\right)\,\mu(\mathrm d \bx).
\]
When $\mu=\mu_t^N=\frac{1}{N} \sum_{j=1}^N \delta_{\bX^{j,N}_t}$, we have:
\[
\langle \varphi_{\mathrm{emb}},\mu_t^N\rangle
= 
\frac{1}{N}\sum_{j=1}^N \varphi_{\mathrm{emb}}\left(\bX^{j,N}_t;\theta_{\mathrm{emb}}\right),
\]
and thus:
\begin{equation}\label{eq:mvnn-empirical}
\bb_\theta(\bX,\mu_t^N)
=
\varphi_{\mathrm{int}}
\left(
\bX,\;
\frac{1}{N}\sum_{j=1}^N \varphi_{\mathrm{emb}}\left(\bX^{j,N}_t;\theta_{\mathrm{emb}}\right);\,
\theta_{\mathrm{int}}
\right).
\end{equation}
Evaluating \eqref{eq:mvnn-empirical} requires a single forward pass of $\varphi_{\mathrm{emb}}$ per particle and a single forward pass of $\varphi_{\mathrm{int}}$ per particle. Consequently, the computational complexity of evaluating the learned drift scales linearly with the number of particles, i.e., $O(N)$. This is in contrast to explicit pairwise interaction models, which incur a quadratic computational cost of order $O(N^2)$.

The model jointly learns (i) a finite collection of test functions
$\varphi_{\mathrm{emb}}$ that extract informative features from the measure, and (ii) an interaction map
$\varphi_{\mathrm{int}}$ that aggregates these features with the local state to approximate the drift. 
The embedding part can be interpreted as learning an optimal, data-driven test function basis on the space of measures. The embedding network $\varphi_\mathrm{emb}$ plays the role of the functions $g_i$ in the cylindrical
functional representation, but unlike polynomial or finite-element bases, it is learned from data instead
of fixed a priori. 
Given the MVNN drift \eqref{eq:mvnn-drift}, the associated particle system with $N$ agents is governed by:
\begin{equation}\label{equ:learned_dynamics}\begin{aligned}
\intd \bX^{\theta,i,N}_t = \bb_\theta\left(\bX^{\theta,i,N}_t,\mu^{\theta,N}_t\right)\intd t + \sigma\intd \mathbf{B}^{i,N}_t,
\end{aligned} 
\end{equation}
where \(\mu^{\theta,N}_t = \frac{1}{N} \sum_{i=1}^N \delta_{\bX^{\theta,i,N}_t}\) is the empirical law of the interacting particles.
Formally, the limiting McKean-Vlasov dynamics for a single representative particle $\bX_ t$ is given by:
\begin{equation}\label{equ:learned_mean_field}
      \begin{aligned}
    \intd \bX^\theta_t =& \bb_\theta\left(\bX^\theta_t, f^\theta_t\right) \intd t + \sigma \intd \mathbf{B}_t,
    \\
    = & \varphi_\mathrm{int}\left(\bX^\theta_t,\int  \varphi_\mathrm{emb}(\bx ) f^\theta_t(\bx)\intd\bx\right)\intd t + \sigma \intd \mathbf{B}_t,
\end{aligned} 
\end{equation}
where $f^\theta_t = \mathrm{Law}(\bX^\theta_t)$ is the law of the random variable $\bX^\theta_t$ at time $t$. The corresponding
Fokker-Planck equation for $(f^\theta_t)_{t\ge0}$ in weak form is
\begin{equation}\label{equ:weak_form_fk}
\begin{aligned}
   \frac{\intd }{\intd t} \left\langle f^\theta_t,\psi \right\rangle =& 
    \left\langle f^\theta_t, \bb_\theta(\bx,f_t^\theta) \cdot \nabla \psi + \frac{1}{2}\sigma^2 \Delta \psi\right\rangle
\\
=& \left\langle f^\theta_t, \varphi_\mathrm{int}\left(\bx,\int  \varphi_\mathrm{emb}(\bx ) f^\theta_t(\bx)\intd\bx\right)\cdot \nabla \psi + \frac{1}{2}\sigma^2 \Delta \psi\right\rangle, 
\end{aligned}
\end{equation}
for all smooth test functions $\psi:\mathbb{R}^d\to\mathbb{R}$ with compact support~\citep{mckean1967propagation}.
\begin{proposition}[Well-Posedness of MVNN-Induced McKean--Vlasov Dynamics]\label{prop:well-possness}
    Assume that $\varphi_{\mathrm{int}}$ and $\varphi_{\mathrm{emb}}$ are globally Lipschitz: there exist $C_i,C_e>0$, such that for all $\bx,\bx'\in \mathbb R^d , \by,\by'\in \mathbb R^{k}$, it holds that:
    \[
    \begin{aligned}
        \left\|\varphi_{\mathrm{emb}}(\bx)-\varphi_{\mathrm{emb}}(\bx')\right\|&\leq C_e\left\|\bx-\bx'\right\|,\\ 
        \left\|\varphi_{\mathrm{int}}(\bx,\by)-\varphi_{\mathrm{int}}(\bx',\by')\right\|&\leq C_i\left(\|\bx-\bx'\|+\left\|\by-\by'\right\|\right).
    \end{aligned}
    \]
    Assume that $f_0\in \mathcal P_2(\mathbb R^d)$,  for any $T>0$, the SDE \eqref{equ:learned_mean_field} has a unique strong solution on [0,T] and consequently, its law is the unique weak solution to the Fokker-Planck equation \eqref{equ:weak_form_fk}.
\end{proposition}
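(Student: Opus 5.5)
The plan is to reduce the statement to the classical Lipschitz theory of McKean--Vlasov SDEs, in the spirit of Sznitman, and argue by a Picard/contraction argument on path laws in the Wasserstein-2 topology.

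\textbf{Step 1: Lipschitz bound for the drift.} First I would show that $\bb_\theta(\bx,\mu)=\varphi_{\mathrm{int}}(\bx,\langle\varphi_{\mathrm{emb}},\mu\rangle)$ is jointly Lipschitz on $\mathbb R^d\times\mathcal P_2(\mathbb R^d)$ with respect to $\|\cdot\|+W_2$. For $\mu,\nu\in\mathcal P_2$, take any coupling $\pi$ of $(\mu,\nu)$. By the Lipschitz bound on $\varphi_{\mathrm{emb}}$,
\[
\left\|\langle\varphi_{\mathrm{emb}},\mu\rangle-\langle\varphi_{\mathrm{emb}},\nu\rangle\right\|\le\int C_e\|\bx-\by\|\,\pi(\intd\bx,\intd\by).
\]
Taking the infimum over couplings gives the bound $C_eW_1(\mu,\nu)\le C_eW_2(\mu,\nu)$. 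This integral is finite for $\mu\in\mathcal P_2$, because $\varphi_{\mathrm{emb}}$ has linear growth. Hence
\[
\|\bb_\theta(\bx,\mu)-\bb_\theta(\bx',\nu)\|\le C_i\|\bx-\bx'\|+C_iC_eW_2(\mu,\nu).
\]
The drift also has linear growth: $\|\bb_\theta(\bx,\mu)\|\le C\bigl(1+\|\bx\|+(\int\|\by\|^2\mu(\intd\by))^{1/2}\bigr)$.

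\textbf{Step 2: Fixed point on path measures.} Let $\mathcal C=C([0,T];\mathbb R^d)$ and consider $\mathcal P_2(\mathcal C)$. For $m\in\mathcal P_2(\mathcal C)$ with time marginals $m_t$, the frozen-measure SDE
\[
\intd \bX_t=\bb_\theta(\bX_t,m_t)\intd t+\sigma\intd\mathbf B_t,\qquad \bX_0\sim f_0,
\]
has a Lipschitz drift in $\bx$ that is measurable in $t$. Continuity in $t$ follows from $W_2(m_t,m_s)\le(\mathbb E^m\sup|\omega_t-\omega_s|^2)^{1/2}$. So this SDE has a unique strong solution with finite second moments, by standard Itô theory and Gronwall. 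Define $\Phi(m)=\mathrm{Law}(\bX)$.

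For two inputs $m,m'$, drive both SDEs with the same Brownian motion and the same initial condition. Using $W_2(m_s,m'_s)^2\le D_s(m,m')^2$, where
\[
D_t(m,m')^2=\inf_\pi\mathbb E\sup_{s\le t}\|\omega_s-\omega'_s\|^2,
\]
Gronwall gives
\[
D_t(\Phi m,\Phi m')^2\le C_T\int_0^t D_s(m,m')^2\,\intd s .
\]
Iterating yields $D_T(\Phi^k m,\Phi^k m')^2\le (C_TT)^k/k!\,D_T(m,m')^2$. Thus some power of $\Phi$ is a contraction on the complete space $(\mathcal P_2(\mathcal C),D_T)$, and $\Phi$ has a unique fixed point. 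The fixed point gives existence of a strong solution. Uniqueness follows because any strong solution's path law in $\mathcal P_2$ is a fixed point. Pathwise uniqueness then follows by applying Gronwall to two solutions with the same law.

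\textbf{Step 3: Fokker--Planck equation.} Applying Itô's formula to $\psi(\bX_t)$ with $\psi\in C_c^\infty$ and taking expectations shows that $f^\theta_t$ satisfies \eqref{equ:weak_form_fk}. For uniqueness of weak solutions, take any weak solution $(\tilde f_t)$ that is continuous in $t$ with values in $\mathcal P_2$. Freeze the drift $\bb_\theta(\cdot,\tilde f_t)$; it is Lipschitz in $\bx$, so the resulting linear Fokker--Planck equation has a unique solution within this class. This uses the standard uniqueness result for linear FPEs with Lipschitz coefficients, for example via the superposition principle of Figalli/Trevisan or a duality argument with the backward Kolmogorov equation. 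That solution is the law of the frozen SDE, which makes $\tilde f$ a fixed point of $\Phi$'s marginal map. Hence $\tilde f=f^\theta$.

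\textbf{Main obstacle.} I expect the delicate part to be this last uniqueness step for the nonlinear PDE. It requires fixing the solution class (narrowly continuous curves in $\mathcal P_2$ with bounded second moments) and invoking a linear uniqueness theorem. The SDE part is routine once Step 1 is in hand.
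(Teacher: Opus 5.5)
Your proposal is correct and follows the paper's route. The paper's proof is exactly your Step 1: it establishes $\|b_\theta(x,\mu)-b_\theta(x',\mu')\|\le C_i\|x-x'\|+C_iC_eW_2(\mu,\mu')$ via a coupling and Cauchy--Schwarz rather than $W_1\le W_2$. It then cites Theorem 1.7 of Carmona's lecture notes, whose proof is the path-space contraction you reproduce in Step 2.

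Your Step 3 goes beyond the paper, which asserts uniqueness of the weak Fokker--Planck solution as an immediate consequence without argument. Your reduction to uniqueness for the frozen linear equation, posed in a specified class of curves in $\mathcal P_2$, is what makes that claim precise.
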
 

Although Proposition~\ref{prop:well-possness} is a standard consequence of Lipschitz continuity for McKean-Vlasov SDEs,
it guarantees that our learned MVNN drift induces a well-posed nonlinear Fokker-Planck evolution at the continuum level. This establishes our method as a rigorous framework for learning continuous governing equations directly from discretized particle-level observations, without requiring any smoothing or filtering procedures, in contrast to existing approaches \citep{messenger2022learning,cohen2024physics}.
However, to complete the theoretical justification, we must demonstrate that the learned $N$-particle system \eqref{equ:learned_dynamics} converges to this identified mean-field model \eqref{equ:learned_mean_field} as $N \to \infty$. 

\begin{proposition}[Mean-Field Convergence and Propagation of Chaos for the Learned Particle System]\label{prop:propogation_of_chaos}
    Let the assumptions of Proposition \ref{prop:well-possness} hold. Let $(\bX_t^{\theta,i,N})_{i=1}^N$ be the $N$-particle system solving \eqref{equ:learned_dynamics} with $f_0$-chaotic initial data $\bX_0^{\theta,i,N} \sim f_0^\theta$. Let $f^\theta_t$ be the unique solution to the mean-field Fokker-Planck equation \eqref{equ:weak_form_fk} with initial condition $f_0^\theta = f_0$. Then, the $N$-particle system \eqref{equ:learned_dynamics} converges to the mean-field model \eqref{equ:learned_mean_field} as $N \to \infty$. That is, for any $T > 0$, the $N$-particle distribution $f_t^{\theta,N} = \text{Law}(\bX_t^{\theta,1,N}, \dots, \bX_t^{\theta,N,N})$ is $f_t$-chaotic, satisfying:\[
    \lim_{N \to \infty}W_2\left(f^{1,\theta,N}_{[0,T]},f^\theta_{[0,T]}\right) =0
\]
where $f_t^{1,\theta,N}$ is the first marginal of $f_t^{\theta,N}$.
\end{proposition}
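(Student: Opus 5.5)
We use Sznitman's synchronous coupling. Using the same Brownian motions $\mathbf{B}^{i,N}$ and the same initial data $\bX_0^{\theta,i,N}$, we build $N$ independent copies $\bar\bX^{i}_t$ of the McKean--Vlasov SDE \eqref{equ:learned_mean_field}:
\[
\intd \bar\bX^i_t=\bb_\theta(\bar\bX^i_t,f^\theta_t)\intd t+\sigma\intd\mathbf{B}^{i,N}_t .
\]
By Proposition~\ref{prop:well-possness}, each copy is well defined, is i.i.d.\ across $i$, and has law $f^\theta_{[0,T]}$ on path space. The pair $(\bX^{\theta,1,N},\bar\bX^1)$ is then a coupling of $f^{1,\theta,N}_{[0,T]}$ and $f^\theta_{[0,T]}$. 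It therefore suffices to show
\[
\mathbb E\Bigl[\sup_{t\le T}\|\bX^{\theta,1,N}_t-\bar\bX^1_t\|^2\Bigr]\to 0 .
\]
By exchangeability, this is the same as controlling the average over $i$.

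The key structural observation is that the drift depends on the measure only through the finite-dimensional feature $\langle\varphi_{\mathrm{emb}},\mu\rangle\in\mathbb R^k$. The Lipschitz assumptions then give
\[
\|\bb_\theta(\bx,\mu)-\bb_\theta(\bx',\nu)\|\le C_i\bigl(\|\bx-\bx'\|+\|\langle\varphi_{\mathrm{emb}},\mu-\nu\rangle\|\bigr).
\]
Let $\bar\mu^N_t=\frac1N\sum_j\delta_{\bar\bX^j_t}$, and split the feature discrepancy as
\[
\langle\varphi_{\mathrm{emb}},\mu^{\theta,N}_t-f^\theta_t\rangle
=\langle\varphi_{\mathrm{emb}},\mu^{\theta,N}_t-\bar\mu^N_t\rangle+\langle\varphi_{\mathrm{emb}},\bar\mu^N_t-f^\theta_t\rangle .
\]
The first term is bounded by $\frac{C_e}{N}\sum_j\|\bX^{\theta,j,N}_t-\bar\bX^j_t\|$. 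The second is an average of i.i.d.\ centered vectors $\varphi_{\mathrm{emb}}(\bar\bX^j_t)-\mathbb E\varphi_{\mathrm{emb}}(\bar\bX^1_t)$. Their variance is finite, since $\varphi_{\mathrm{emb}}$ has linear growth and $\sup_{t\le T}\mathbb E\|\bar\bX_t\|^2<\infty$; the latter follows from the standard moment bound for Lipschitz McKean--Vlasov SDEs with $f_0\in\mathcal P_2$. Hence the second term has second moment
\[
\mathbb E\|\langle\varphi_{\mathrm{emb}},\bar\mu^N_t-f^\theta_t\rangle\|^2\le C/N ,
\]
uniformly in $t\le T$.

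Next, the noise cancels in the difference, so
\[
\bX^{\theta,i,N}_t-\bar\bX^i_t=\int_0^t\bigl[\bb_\theta(\bX^{\theta,i,N}_s,\mu^{\theta,N}_s)-\bb_\theta(\bar\bX^i_s,f^\theta_s)\bigr]\intd s .
\]
Set $e_N(t)=\frac1N\sum_i\mathbb E\sup_{r\le t}\|\bX^{\theta,i,N}_r-\bar\bX^i_r\|^2$. Cauchy--Schwarz in time, the Lipschitz bound and Jensen give
\[
e_N(t)\le C_T\int_0^t e_N(s)\intd s+C_T\int_0^t\mathbb E\|\langle\varphi_{\mathrm{emb}},\bar\mu^N_s-f^\theta_s\rangle\|^2\intd s
\le C_T\int_0^t e_N(s)\intd s+\frac{C_T'}{N}.
\]
Gronwall then yields $e_N(T)\le C_T''/N$, and by exchangeability $\mathbb E\sup_{t\le T}\|\bX^{\theta,1,N}_t-\bar\bX^1_t\|^2\le C/N$. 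This gives $W_2(f^{1,\theta,N}_{[0,T]},f^\theta_{[0,T]})\le C/\sqrt N\to0$, which is in fact a quantitative rate.

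Chaoticity of $f^{\theta,N}_t$ follows by the same coupling. For fixed $k$, the first $k$ particles are within $O(N^{-1/2})$ in $L^2$ of the i.i.d.\ family $(\bar\bX^1,\dots,\bar\bX^k)$, so the $k$-marginals converge to $(f^\theta_t)^{\otimes k}$.

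The step needing the most care is the sampling fluctuation term. Unlike the general Wasserstein-Lipschitz setting, where one needs Fournier--Guillin rates for $W_2(\bar\mu^N,f)$ with their dimension dependence, here the cylindrical structure reduces it to a plain law of large numbers in $\mathbb R^k$. That reduction gives the dimension-free $1/N$ bound, provided the uniform-in-time second-moment bound on $\bar\bX_t$ is established first.
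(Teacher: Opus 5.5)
Your proposal is correct and follows essentially the same route as the paper. The shared steps are: synchronous coupling with i.i.d.\ McKean--Vlasov copies (both you and the paper implicitly take the initial data i.i.d.\ with law $f_0$, which the independence of the copies requires); a split of the drift error into a coupling term and a sampling-fluctuation term, which the cylindrical structure reduces to a law of large numbers for $\langle\varphi_{\mathrm{emb}},\bar\mu^N_s\rangle$ in $\mathbb R^k$; and Gronwall. Your handling of the details is cleaner than the paper's: you use Cauchy--Schwarz in time instead of It\^o's formula plus Young, and you bound the empirical discrepancy directly in feature space while averaging over $i$, rather than passing through $W_2(\mu^{\theta,N}_s,\bar\mu^N_s)$; this also makes the $O(N^{-1/2})$ rate explicit.
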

By establishing the propagation of chaos, Proposition~\ref{prop:propogation_of_chaos} rigorously bridges the gap between the microscopic and macroscopic descriptions. It confirms that the learned particle dynamics systematically converge to the proposed mean-field model in the large-scale limit. The network structure \eqref{eq:mvnn-drift} is closely related to the mean-field neural network architecture studied in
\cite{pham2023mean} for optimal control on Wasserstein space. In particular, a universal approximation theorem
shows that cylindrical neural functionals of the form \eqref{eq:mvnn-drift} can approximate any continuous drift
$\bb^\star:\mathbb{R}^d\times\mathcal{P}_2(\mathbb{R}^d)\to\mathbb{R}^d$ with arbitrarily small error.
\begin{theorem}[Universal Approximation for Measure Valued Neural Network]\label{thm:uap_mvnn}
    
     Let $\zeta$ be a probability measure on $\mathcal P_2(\mathbb R^d)$, and $\bb^\star$ be a continuous map from $\mathbb R ^d \times \mathcal P_2(\mathbb R^d)\to \mathbb R^d$, such that $\|\bb^\star\|^2_{ L^2(\zeta)}:= \int_{\mathcal P_2 (\mathbb R^d )} \int_{\mathbb R^d} \left\|\bb^\star(\bx,\mu)\right\|^2
      \mu(\intd \bx)\zeta(\intd \mu)\leq \infty $ . Then for all $\epsilon>0$, there exist $k\in \mathbb N$, $\varphi_\mathrm{int}$ a neural network, mapping from $\mathbb R^d \times \mathbb R^k $ into  $\mathbb R^d$ and $\varphi_\mathrm{emb}$ another neural network, mapping from $\mathbb R^d$  into $\mathbb R^k$, such that 
    \[
    \left\|\bb^\star(\cdot, \cdot ) - \varphi_\mathrm{int}(\cdot,\langle\varphi_\mathrm{emb},\cdot\rangle) \right\|_{L^2(\zeta)} \leq \epsilon
    \]
\end{theorem}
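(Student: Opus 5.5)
The plan follows the mean-field neural network universal approximation argument of \cite{pham2023mean}, with a three-step reduction. The error is measured in $L^2$ with respect to the finite measure $\nu(\intd \bx,\intd\mu):=\mu(\intd\bx)\zeta(\intd\mu)$ on $\mathbb R^d\times\mathcal P_2(\mathbb R^d)$.

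First, reduce to a compact set. The space $\mathbb R^d\times\mathcal P_2(\mathbb R^d)$ is Polish, so $\nu$ is a finite Borel measure on a Polish space and is therefore tight. Since $\bb^\star\in L^2(\nu)$, dominated convergence gives a compact $K=K_x\times\mathcal K\subset\mathbb R^d\times\mathcal P_2(\mathbb R^d)$ with $\int_{K^c}\|\bb^\star\|^2\intd\nu<\epsilon^2$. Next, take a continuous cutoff $\chi$ equal to $1$ on $K$ and supported on a slightly larger compact set. This lets us replace $\bb^\star$ by the bounded, compactly supported function $\chi\bb^\star$, which is continuous on a compact set. The price is an extra error term on $K^c$, controlled as long as the final approximant is uniformly bounded. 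Boundedness can be arranged by composing with a clipping map built from ReLUs.

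Second, approximate by a cylindrical function. On the compact set $\tilde K$, apply Stone--Weierstrass to the algebra generated by the functions $(\bx,\mu)\mapsto p(\bx)\,\langle g,\mu\rangle$, where $p$ is a polynomial in $\bx$ and $g$ is a bounded Lipschitz (or $C_c^\infty$) test function. This algebra contains constants. It separates points because such $g$ separate probability measures and polynomials separate points of $\mathbb R^d$. It is continuous on $\mathcal P_2$, since $W_2$-convergence implies weak convergence. Hence there exist $g_1,\dots,g_k$ and a continuous $F:\mathbb R^d\times\mathbb R^k\to\mathbb R^d$ such that
\[
\sup_{(\bx,\mu)\in\tilde K}\bigl\|\chi\bb^\star(\bx,\mu)-F\bigl(\bx,\langle g_1,\mu\rangle,\dots,\langle g_k,\mu\rangle\bigr)\bigr\|<\epsilon.
\]
In fact $F$ can be taken to be a polynomial in its last $k$ arguments. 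Alternatively, this step can be quoted from the density of cylindrical functions \cite[Lemma~3.12]{guo2023ito}.

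Third, replace the continuous pieces by networks. Because each $g_j$ is bounded, the vector $\langle g,\mu\rangle$ ranges over a compact box $B\subset\mathbb R^k$. By the classical universal approximation theorem, we choose $\varphi_{\mathrm{int}}$ uniformly $\epsilon$-close to $F$ on $K_x'\times B'$, where $K_x'$ and $B'$ are slight enlargements of $K_x$ and $B$. The map $F$ is uniformly continuous there with some modulus $\delta$. We then choose $\varphi_{\mathrm{emb}}$ uniformly $\delta$-close to $g=(g_1,\dots,g_k)$ on $\mathbb R^d$. This is the main obstacle, because plain universal approximation is only uniform on compact sets. Two fixes are available. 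One is to approximate $g$ on a large ball $R$ and clip the network output to the range of $g$; then $|\langle\varphi_{\mathrm{emb}}-g,\mu\rangle|\le\delta+2\|g\|_\infty\,\mu(B_R^c)$. Because $\mathcal K$ is compact in $W_2$, its measures are uniformly tight, so $\sup_{\mu\in\mathcal K}\mu(B_R^c)\to0$ as $R\to\infty$. The other is to choose the $g_j$ compactly supported from the outset.

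Finally, combine the estimates. On $K$ the total error is at most $\epsilon$ plus the uniform-continuity error, $O(\epsilon)$ in all. On $K^c$ both $\chi\bb^\star$ and the clipped network are bounded, and $\nu(K^c)$ is small, so $\|\bb^\star\|$ and the network contribute $O(\epsilon)$ in $L^2(\nu)$. Rescaling $\epsilon$ gives the theorem.
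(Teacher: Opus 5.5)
Your overall strategy is sound, and it is in substance what the paper outsources. The paper's proof is only the observation that the statement is Theorem~2.2 of \cite{pham2023mean} with $p=d$, $V\leftrightarrow\bb^\star$, $\Psi\leftrightarrow\varphi_{\mathrm{int}}$, $\phi\leftrightarrow\varphi_{\mathrm{emb}}$. Your Stone--Weierstrass step for the algebra generated by $p(\bx)\langle g,\mu\rangle$ on a compact set is fine. So is your treatment of the embedding: clip $\varphi_{\mathrm{emb}}$ to the range of $g$ and use uniform tightness of the $W_2$-compact set $\mathcal K$. The gap is in how you control the error off the compact set.

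First, the cutoff $\chi$ cannot exist, because $\mathcal P_2(\mathbb R^d)$ is nowhere locally compact. For any $\mu$, the measures $(1-\eta_n)\mu+\eta_n\delta_{x_n}$ with $|x_n|\to\infty$ and $\eta_n|x_n|^2=r^2$ stay in a $W_2$-ball around $\mu$. They have no $W_2$-convergent subsequence, since their second moments do not converge to that of their weak limit $\mu$. Hence no continuous function equal to $1$ on a nonempty compact set has compact support. The cutoff is also unnecessary: Stone--Weierstrass can be applied to $\bb^\star|_K$ directly.

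The real problem is the order of choices. You fix $K$ only by requiring $\int_{K^c}\|\bb^\star\|^2\,\intd\nu<\epsilon^2$. The clipping level $B$ of the network must then be at least of order $\sup_K\|\bb^\star\|$. On $K^c$ the network is unconstrained and may contribute up to order $B^2\nu(K^c)$. Your choice of $K$ does not control this, since $B$ depends on $K$ and grows with it. For instance, take $\zeta=\frac12\delta_{\delta_0}+\frac12\delta_{\delta_{\mathbf e}}$ with $\mathbf e\neq0$ and $\bb^\star(\mathbf e,\delta_{\mathbf e})=0$. Your criterion accepts $K=\{(0,\delta_0)\}$, and Stone--Weierstrass may return the constant $\bb^\star(0,\delta_0)$. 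The final $L^2(\nu)$ error can then be of order $\|\bb^\star(0,\delta_0)\|$, however accurate the networks are.

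The missing step is to decouple the bound from $K$. First replace $\bb^\star$ by $\bb^\star_M:=\pi_M\circ\bb^\star$, where $\pi_M$ is the projection of $\mathbb R^d$ onto the closed ball of radius $M$. This function is continuous and bounded by $M$, and $\|\bb^\star-\bb^\star_M\|_{L^2(\nu)}<\epsilon$ for large $M$ by dominated convergence. Only then choose a compact $K$ with $4dM^2\nu(K^c)<\epsilon^2$ and approximate $\bb^\star_M$ uniformly on $K$ as you do. Clip the output of $\varphi_{\mathrm{int}}$ componentwise to $[-M,M]$; this does not increase the error on $K$. The contribution of $K^c$ is then below $\epsilon$ in $L^2(\nu)$, and the rest of your argument goes through.
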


The proof follows directly from Theorem~2.2 in~\cite{pham2023mean}, 
which establishes the universal approximation property for functions 
on measure spaces of the form $\mathbb{R}^d \times \mathcal{P}_2(\mathbb{R}^d)$
using neural network parameterizations of cylindrical form. This theorem is a direct specialization of Theorem 2.2 in~\cite{pham2023mean} to the drift setting $p=d$, with the notational correspondence $V\leftrightarrow b^\star$, $\Psi\leftrightarrow \phi_{\mathrm{int}}$, and $\phi\leftrightarrow \phi_{\mathrm{emb}}$.

While Theorem \ref{thm:uap_mvnn} ensures the theoretical capability of MVNN, it does not address the efficiency of the approximation. In fact, without further structural assumptions, the number of parameters required may grow rapidly with dimension.  Following \cite{liu2024neural, weihs2025deep}, we define a ReLU network $q:\mathbb R^{d_1}\to \mathbb R$ as:
\begin{equation}\label{equ:1drelunn}
q(\bx) = W_L\cdot \mathrm{Relu}(W_{L-1}\cdots\mathrm{Relu}(W_{1}\bx+b_1)+\cdots +b_{L-1})+b_L, 
\end{equation}
where $W_l$ are weight matrics, $b_l$ are bias vectors, $\mathrm{ReLU}(a) = \max\{a,0\}$. We define the nextwork class $\mathcal F_{NN}:\mathbb R^{d_1}\to \mathbb R^{d_2}$:
\[
\begin{aligned}
\mathcal F_{NN} (d_1,d_2,L,p,K,\kappa,R) = 
   \{ &[q_1,q_2,\cdots,q_{d_2}] ^\top \in \mathbb R^{d_1}: \text{for each }k = 1,\cdots, d_2,\\
    & q_k:\mathbb R^{d_1}\to \mathbb R \text{ is in the form of \ref{equ:1drelunn} with $L$ layers, width bounded by $p$} \\
    & \|q_k\|_{L^\infty}\leq R , \|W_l\|_{\infty,\infty}\leq \kappa, \|b_l\|_\infty\leq \kappa, \sum_{l=1}^L \|W_l\|_0 + \|b_l\|_0 \leq K, \forall l\} 
\end{aligned}
\]
where $\|q\|_{L^\infty(\Omega)} = \sup_{\bx\in \Omega} |q(\bx)|$, $\|W_l\|_{\infty,\infty} = \max_{ij}|(W_{l})_{i,j}|$, $\|b\|_\infty = \max_{i}|b_i|$ and $\|\cdot\|_0$ denotes the number of nonzero elements of its argument.

\begin{theorem}[Quantitative Approximation via Finite-Dimensional Measure Embeddings]\label{thm:scaleing_law}
    Assume that the empirical measure $\mu\in U \subset \mathcal P_2(\Omega)$ is supported in $\Omega=[-\gamma_1,\gamma_1]^d\subset \mathbb R^d$ for some $\gamma_1>0$. We further assume that the map $f:\Omega \times U \to \mathbb R $ is a Lipschitz continuous map in the sense that for any $\mu,\nu\in U $ and $\bx,\by\in \Omega$, there exists a constant $L_f$ such that 
    \[
    |f(\bx,\mu) -f(\by,\nu) | \leq L_f \left(\|\bx-\by\| +W_1(\mu,\nu)\right).
    \]
    For any $\epsilon>0$, there exist a constant $C$ depends on $\gamma_1 $ and $L_f$, and $\{\bc_m\}_{m=1}^{C_\Omega}\subset\Omega$ so that $\{B_\delta (\bc_m)\}_{m=0}^{C_\Omega}$ is a cover of~$\Omega$ for $\delta = \frac{\epsilon}{2L_f}  $  and some $C_\Omega = \epsilon^{-d_1}$. Let $H=C(C_\Omega+d)^{\frac{C_\Omega+d}{2}}(C_\Omega d)^{\frac{C_\Omega+d}{2}}\epsilon^{-C_\Omega-d}$ and there are $H$ ReLU neural networks $\{q_k\}_{k=1}^H$, where $q_k\in \mathcal F_{NN}(d+C_\Omega,1,L,p,K,\kappa,R)  $ with \[
\begin{aligned}
L  & = O((d+C_\Omega)^2 \log(\epsilon^{-1}) + (d+C_\Omega)^2 \sqrt{{C_\Omega}d}L_f + (d+C_\Omega)^2 \log(d+C_\Omega))) &
p &= O(1)\\
K &= O((d+C_\Omega)^2 \log(\epsilon^{-1}) + (d+C_\Omega)^2\sqrt{{C_\Omega}d}L_f + (d+C_\Omega) ^2 \log(d+C_\Omega) )\\
\kappa &=  O\left((d+C_\Omega)^{\frac{d+C_\Omega}{2}+1}\epsilon^{-d-C_\Omega-1}L_h^{d+C_\Omega}\right) &
R & = O(1),
\end{aligned}
\] such that 
    \[
    \sup_{\mu\in U ,\bx\in \Omega }\left|f(\bx,\mu)- \sum_{k=1}^H a_k q_k(\bx,\bu) \right|\leq \epsilon,
    \]
    where 
    \(\bu=\left(\left\langle \omega_1,\mu\right\rangle, \cdots , \left\langle \omega_{C_\Omega},\mu\right\rangle\right)
    \), $\{\omega_m(\bx)\}_{m=1}^{C_\Omega}$ is partition of unity subordinate to the cover \(\{\mathcal B_\delta (\mathbf c_m)\}_{m=1}^{C_\Omega}\) and $a_k$ depends on $f$.
\end{theorem}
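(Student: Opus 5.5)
The plan is a two-stage reduction. First, replace the measure argument by the finite vector of partition-of-unity moments. Second, approximate the resulting Lipschitz function on a finite-dimensional cube with ReLU networks, using an existing quantitative result in the style of \cite{liu2024neural, weihs2025deep}.

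\emph{Step 1 (quantization of the measure).} Cover $\Omega=[-\gamma_1,\gamma_1]^d$ by $C_\Omega$ balls $B_\delta(\bc_m)$ with $\delta=\epsilon/(2L_f)$; a standard grid gives $C_\Omega\lesssim(\gamma_1\sqrt d L_f/\epsilon)^d$. Take a Lipschitz partition of unity $\{\omega_m\}$ subordinate to this cover. For $\mu\in U$, define the atomic measure
\[
\Pi\mu:=\sum_{m=1}^{C_\Omega}\langle\omega_m,\mu\rangle\,\delta_{\bc_m}.
\]
Since $\omega_m(\bx)\neq 0$ only when $\|\bx-\bc_m\|<\delta$, transporting mass $\omega_m(\bx)\mu(\intd\bx)$ from $\bx$ to $\bc_m$ is an admissible coupling. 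Hence $W_1(\mu,\Pi\mu)\le\delta$.

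Now define $g:\Omega\times\Delta_{C_\Omega}\to\mathbb R$ by
\[
g(\bx,\bu):=f\Big(\bx,\textstyle\sum_m u_m\delta_{\bc_m}\Big),
\]
where $\Delta_{C_\Omega}$ is the simplex. By the Lipschitz hypothesis, $|f(\bx,\mu)-g(\bx,\bu(\mu))|\le L_f\delta=\epsilon/2$.

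Two technical points arise here. First, $f$ is only assumed on $U$, so I would extend it, or assume $U$ contains these atomic measures; alternatively, McShane-extend $g$. Second, $g$ must be Lipschitz in $\bu$. I would check that
\[
W_1\Big(\textstyle\sum u_m\delta_{\bc_m},\sum u'_m\delta_{\bc_m}\Big)\le 2\sqrt d\,\gamma_1\|\bu-\bu'\|_1\le 2\sqrt d\,\gamma_1\sqrt{C_\Omega}\,\|\bu-\bu'\|_2 .
\]
This makes $g$ Lipschitz on $[-\gamma_1,\gamma_1]^d\times[0,1]^{C_\Omega}$ with constant $L_h\lesssim L_f\sqrt{C_\Omega d}$, which explains the factor $\sqrt{C_\Omega d}\,L_f$ appearing in the depth bound.

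\emph{Step 2 (finite-dimensional ReLU approximation).} Apply to $g$, now a Lipschitz function of $D=d+C_\Omega$ variables, the ReLU approximation theorem used in \cite{liu2024neural}. I would rescale to the unit cube and build a piecewise-constant or piecewise-linear interpolant on a grid of mesh $\sim\epsilon/(L_h\sqrt D)$. That grid has $H\sim(\sqrt D L_h/\epsilon)^D$ cells, matching the stated $H=C(C_\Omega+d)^{\frac{C_\Omega+d}{2}}(C_\Omega d)^{\frac{C_\Omega+d}{2}}\epsilon^{-C_\Omega-d}$. Writing the interpolant as $\sum_k a_k q_k$, with $a_k$ the grid values of $g$ and each $q_k$ a bump-like ReLU subnetwork, gives a uniform error of $\epsilon/2$. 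The bumps are products of one-dimensional hat functions, and these products are approximated by constant-width deep ReLU multiplication networks (Yarotsky-type). This produces width $p=O(1)$, depth and sparsity $O(D^2\log(\cdot))$, and weight magnitude $\kappa$ polynomial in the grid size. Combining the two $\epsilon/2$ bounds yields the claimed estimate.

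The main obstacle is the bookkeeping in Step 2. One must track how the constant-width product networks and the log factors in $L$ and $K$ depend on $D$, $\epsilon$ and $L_h$, and confirm that $\kappa$ scales as stated. I would cite the existing theorem for this rather than rederive it, so the real work is verifying its hypotheses: the domain box, the Lipschitz constant $L_h$, and the boundedness $R=O(1)$, for which I would normalize $f$.
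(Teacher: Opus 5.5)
Your proposal follows essentially the same route as the paper: the same $\delta=\epsilon/(2L_f)$ cover and partition of unity, the same atomic quantization $\sum_m u_m\delta_{\bc_m}$ controlled by the coupling $\sum_m\omega_m(\bx)\delta_{\bc_m}(\intd\by)\,\mu(\intd\bx)$, the same $\ell^1\to\ell^2$ estimate giving a Lipschitz constant of order $L_f\sqrt{C_\Omega d}$ for $g(\bx,\bu)=f(\bx,\sum_m u_m\delta_{\bc_m})$, and then a grid of products of one-dimensional bumps realized by constant-width deep ReLU multiplication networks in $d+C_\Omega$ variables. Two small remarks: first, the paper does not cite the finite-dimensional step but re-proves it (Theorem~\ref{theorem:Lip_depend_scaleing}), because, as it notes, the cited results do not make the dependence on $L_h$ explicit, so your ``cite rather than rederive'' plan would in fact require that bookkeeping; second, the issues you flag are real gaps that the paper passes over silently, namely that the quantized measure need not lie in $U$ and that $g$ is defined only on $\Omega$ times the simplex while the grid lives on a box, and your McShane extensions (of $f$ in the metric $\|\bx-\by\|+W_1$, and of $g$ off the simplex) repair them.
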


Theorem \ref{thm:scaleing_law} provides a constructive finite-dimensional approximation result of the same form as MVNN. The partition-of-unity features 
$\bigl(\langle \omega_m, \mu \rangle\bigr)_{m=1}^{C_\Omega}$
provide a fixed embedding of the measure, whereas MVNN replaces them with a learned embedding 
$\bu = \langle \varphi_{\mathrm{emb}}, \mu \rangle$, and the networks $q_k$ correspond to the interaction network $\varphi_{\mathrm{int}}$. For example, when the output dimension is one, the terms are $a_k q(x,\bu) = \varphi_\mathrm{int}(x,\langle\varphi_\mathrm{emb},\mu\rangle)$. Theorem \ref{thm:scaleing_law} highlights a fundamental bottleneck: without structural constraints, the complexity of approximating functionals on the Wasserstein space lead to the curse-of-dimensionality, where the required network size grows quickly with the dimension $d$. This implies that a naive application of MVNNs would be computationally intractable for high-dimensional systems. However, in physical, biological, and social contexts, the effective mean-field interactions are unlikely to be arbitrary functionals of the full probability measure. Instead, collective behaviors, such as flocking, synchronization, or aggregation, are typically governed by a compact set of macroscopic summary statistics, often referred to as order parameters in statistical physics (e.g., local density, mean momentum, or polarity). These systems are embedded in a low-dimensional manifold within the infinite-dimensional space of measures. To formalize and utilize this intrinsic physical structure, we introduce the following assumption, which bridges the gap between the theoretical worst-case complexity and the practical efficiency observed in our experiments.
\begin{assumption}[Finite-Dimensional Measure Dependency]\label{assum:finite-dimensional}
Let $U \subset \mathcal{P}_2(\mathbb{R}^d)$ be the space of measures under consideration. We assume there exists a fixed, finite set of $r$ feature functions $\mathcal{F} = \{f_i: \mathbb{R}^d \to \mathbb{R}\}_{i=1}^r$ that fully characterizes the dependencies for all relevant functionals. Specifically, for any functional $V: U \to \mathbb{R}$, there exists a corresponding function $G: \mathbb{R}^r \to \mathbb{R}$ such that:
\[
V(\mu) = G(\langle f_1, \mu \rangle, \dots, \langle f_m, \mu \rangle) \quad \forall \mu \in U
\]
\end{assumption}

\begin{remark}          Assumption \ref{assum:finite-dimensional} effectively positing that the measures of interest $U$ lie on a finite-dimensional manifold embedded within $\mathcal{P}_2(\mathbb{R}^d)$. 
    An example is the family of Gaussian distributions $\mathcal{N}(\nu, \Sigma)$, which are uniquely determined by their mean and covariance (moments). 
    Thus, any functional $V$ defined only on this family of measures can be written as $V(\mu) = G(\nu, \Sigma)$, which perfectly matches our assumed form with a finite $r = d + d(d+1)/2$.
\end{remark}
\begin{remark}\label{remark:G}
    A direct consequence of Assumption \ref{assum:finite-dimensional} is that our target drift function $\bb^\star(\bX, \mu)$, which depends on both the state $\bX$ and the measure $\mu$, must also admit a finite-dimensional representation.
    This follows by treating the state $\bX$ as a fixed parameter. For any given $\bX \in \mathbb{R}^d$, the mapping $\bb^\star(\bX, \cdot): U \to \mathbb{R}^d$ is a pure functional of $\mu$. Assumption 1 then guarantees the existence of a corresponding function $G_X: \mathbb{R}^r \to \mathbb{R}^d$ such that:
    \[
    \bb^\star(\bX, \mu) = G_\bX(\langle f_1, \mu \rangle, \dots, \langle f_r, \mu \rangle).
    \]
    Since this holds for all $\bX$, we can construct a single, global function $G: \mathbb{R}^d \times \mathbb{R}^r \to \mathbb{R}^d$ by defining $G(\bX, \bz) := G_X(\bz)$, where $\bz \in \mathbb{R}^r$. 
\end{remark}
\begin{theorem}[Approximation Rate of MVNN under Low-Dimensional Assumption]
    \label{thm:mvnn-approximation}
    Let the state $\bX \in [0,1]^d$ and assume that the measure $\mu$ is supported on $[0,1]^d$.
    Suppose  that Assumption~\ref{assum:finite-dimensional} holds on $U$ for some $r \in \mathbb{N}$ and a collection of feature functions $\bg = (g_1, \dots, g_r)$.
    As established in Remark~\ref{remark:G}, this implies that there exists a function $G: \mathbb{R}^d \times \mathbb{R}^r \to \mathbb{R}^d$ such that
    \[
        \bb^\star(\bX, \mu) = G(\bX, \langle \bg, \mu \rangle).
    \]
    We further assume that the  functions $\bg$ and $G$ are Lipschitz continuous with constants $L_{\bg}$ and $L_G$, respectively.
    
    
    Then for any $\epsilon > 0$, there exists deep ReLU networks $\varphi_\mathrm{emb}$ and $\varphi_\mathrm{int}$,  such that
    \[
        \sup_{\bX \in [0,1]^d,\, \mu \in U} \| \bb^\star(\bX, \mu) - \varphi_\mathrm{int}(\bX,\langle\varphi_\mathrm{emb},\mu\rangle) \|_{\mathbb{R}^d} \le \epsilon,
    \]
    where the interaction network $\varphi_\mathrm{int}$ has depth \(
    L_{\mathrm{i}} = O(\log(\epsilon^{-1})) 
    \) and width \(
    W_{\mathrm{i}} = O(\epsilon^{-(d+r)} )
    \) and the embedding network $\varphi_\mathrm{emb}$ has depth $ L_{\mathrm{e}} = O(\log(\epsilon^{-1}))$ and width \(
    W_{\mathrm{e}} = O\bigl(\epsilon^{-d}\bigr)
    \).
\end{theorem}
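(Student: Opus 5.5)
The plan is to approximate the two factors in $\bb^\star(\bX,\mu)=G(\bX,\langle \bg,\mu\rangle)$ separately by deep ReLU networks and then control how the errors propagate through the composition. The standard Yarotsky-type estimate states that a Lipschitz function on a $D$-dimensional cube can be approximated uniformly to accuracy $\eta$ by a ReLU network of depth $O(\log(\eta^{-1}))$ and width $O(\eta^{-D})$. We apply it with $D=d$ for the embedding and $D=d+r$ for the interaction. This also yields a piecewise-linear approximation that is itself Lipschitz with constant comparable to that of the target.

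\textbf{Step 1 (embedding).} Each $g_i$ is $L_{\bg}$-Lipschitz on $[0,1]^d$. For each $i=1,\dots,r$, choose a ReLU network $\tilde g_i$ with $\sup_{[0,1]^d}|g_i-\tilde g_i|\le \eta$, depth $O(\log \eta^{-1})$ and width $O(\eta^{-d})$. Stacking the $r$ networks in parallel gives $\varphi_\mathrm{emb}=(\tilde g_1,\dots,\tilde g_r)$, so $k=r$ and the width is $O(r\eta^{-d})=O(\eta^{-d})$ since $r$ is fixed. Because $\mu$ is a probability measure supported in $[0,1]^d$, integrating the pointwise bound gives
\[
\|\langle \bg,\mu\rangle-\langle\varphi_\mathrm{emb},\mu\rangle\|\le \sqrt r\,\eta
\]
uniformly in $\mu\in U$. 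Both vectors lie in the bounded box $Z=\prod_i[\min g_i-\eta,\max g_i+\eta]$.

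\textbf{Step 2 (interaction).} Restrict $G$ to the compact set $[0,1]^d\times Z$ and rescale it affinely to the unit cube; this only changes constants by factors depending on $L_{\bg}$. For each of the $d$ output components, take a ReLU network approximating that component of $G$ to accuracy $\eta$, with depth $O(\log\eta^{-1})$ and width $O(\eta^{-(d+r)})$. Stacking the $d$ components gives $\varphi_\mathrm{int}$.

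\textbf{Step 3 (composition).} The error splits as
\[
\|G(\bX,\bz)-\varphi_\mathrm{int}(\bX,\tilde\bz)\|\le \|G(\bX,\bz)-G(\bX,\tilde\bz)\|+\|G(\bX,\tilde\bz)-\varphi_\mathrm{int}(\bX,\tilde\bz)\|\le L_G\sqrt r\,\eta+\sqrt d\,\eta ,
\]
with $\bz=\langle\bg,\mu\rangle$ and $\tilde\bz=\langle\varphi_\mathrm{emb},\mu\rangle$. Applying the Lipschitz property to the \emph{target} $G$, rather than to the network, avoids needing Lipschitz control of $\varphi_\mathrm{int}$. Choosing $\eta=\epsilon/(L_G\sqrt r+\sqrt d)$ gives total error at most $\epsilon$. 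Since $\eta\asymp\epsilon$, the depths and widths become $O(\log\epsilon^{-1})$, $O(\epsilon^{-d})$ for $\varphi_\mathrm{emb}$ and $O(\log\epsilon^{-1})$, $O(\epsilon^{-(d+r)})$ for $\varphi_\mathrm{int}$, as claimed.

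\textbf{Main obstacle.} The delicate step is ensuring that $\varphi_\mathrm{int}$ is only ever evaluated on the compact domain where its approximation guarantee holds. This is why $Z$ is enlarged by $\eta$ in Step 1: $\tilde\bz$ stays inside $Z$ because each $\tilde g_i$ is within $\eta$ of $g_i$ pointwise, and averaging preserves the box. Since the measures are supported in $[0,1]^d$, the networks are never evaluated outside their cubes. A secondary point is to cite a Yarotsky-type theorem with explicit width $\eta^{-D}$ and logarithmic depth for Lipschitz functions; if only a depth-width tradeoff version is available, I would use the fixed-depth $O(\log\eta^{-1})$ variant with width $O(\eta^{-D})$, up to logarithmic factors absorbed into the $O$.
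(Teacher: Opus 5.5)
Your proof is correct and follows the paper's argument essentially step for step. Both split the error by the triangle inequality, applying the Lipschitz constant of the target $G$ (not the network) to the embedding error, and both invoke Yarotsky's theorem separately for $\mathbf g$ on $[0,1]^d$ and for $G$ on a box enlarged by the embedding error. The differences are cosmetic: your box $Z$ versus the paper's $[-R,R]^r$ with $R=M_{\mathbf g}+\epsilon_{\mathrm e}$, and your componentwise $\sqrt r,\sqrt d$ bookkeeping. Your side claim that the Yarotsky network is Lipschitz with a comparable constant is never used and can be dropped.
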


\subsection{Learning Objective and Optimization}

Given the observation dataset:
\[
\mathcal{D}_{\text{obs}}
= \bigl\{ (\bX_{t_l,m}^i, \bV_{t_l,m}^i) \bigr\}_{i=1,l=1,m=1}^{N,L,M},
\]
we approximate the empirical measure at time $t_l$ in trajectory $m$ by
\(
\hat{\mu}_{t_l,m} = \frac{1}{N}\sum_{j=1}^N \delta_{\bX_{t_l,m}^j}.
\)
The mean-field drift predicted by the MVNN is then given by:
\[
\hat{\bb}_\theta(\bX_{t_l,m}^i, \hat{\mu}_{t_l,m})
= \varphi_\mathrm{int}\!\left(\bX_{t_l,m}^i,\,
\frac{1}{N}\sum_{j=1}^N \varphi_\mathrm{emb}(\bX_{t_l,m}^j; \theta_\mathrm{emb});
\theta_\mathrm{int} \right).
\]
The model parameters $\theta = (\theta_\mathrm{int}, \theta_\mathrm{emb})$ are learned by minimizing the discrepancy between the observed and predicted trajectory. 
Let $\mathbb P^\theta$ denote the path measure induced by the solution
$(\bX^{\theta}_s)_{s\in[0,t]}$ of the McKean-Vlasov SDE \eqref{equ:learned_mean_field}.
Under standard regularity conditions ensuring absolute continuity of path measures,
Girsanov's theorem yields the log-likelihood function (see \cite{wen2016maximum})
\begin{equation}\label{eq:girsanov_LL}
\begin{aligned}
\mathcal L_t(\theta)
:= \log \frac{\intd\mathbb P^\theta}{\intd\mathbb P^{\bb}}
= &\int_0^t
\Big\langle
\hat{\bb}_\theta(\bX_s,\mu_s^\theta)-\bb(\bX_s,\mu_s),\,
(\sigma(\bX_s)\sigma(\bX_s)^\top)^{-1}\, \intd \bX_s
\Big\rangle \\
&-\frac12\int_0^t
\left(
\big\|\sigma(\bX_s)^{-1}\hat{\bb}_\theta(\bX_s,\mu_s^\theta)\big\|^2
-\big\|\sigma(\bX_s)^{-1}\bb(\bX_s,\mu_s)\big\|^2
\right)\,\intd s .
\end{aligned}
\end{equation}

Similar to Section 2.1.2 in \cite{sharrock2021parameter}, when we only observe discrete-time samples of
$M$ trajectories of an $N$-particle system at $t_l=l\Delta t$:
$\{\bX^{i}_{t_l,m}\}_{i=1}^N$ for $m=1,\dots,M$ and $l=0,\dots,L$.
Approximating the stochastic integral by the Euler–Maruyama method,
we obtain the discrete-time likelihood (up to $\theta$-independent constants):
\begin{equation}\label{eq:disc_LL}
\begin{aligned}
\hat{\mathcal L}(\theta)
=&\ \frac{1}{MLN}\sum_{m=1}^M\sum_{l=0}^{L-1}\sum_{i=1}^N
\Bigg[
\Big\langle
\hat{\bb}_\theta(\bX^{i}_{t_l,m},\hat\mu_{t_l,m})-\bb(\bX^{i}_{t_l,m},\hat\mu_{t_l,m}),
(\sigma\sigma^\top)^{-1}\Delta \bX^{i}_{t_l,m}
\Big\rangle \\
&
-\frac12\Big(
\|\sigma^{-1}\hat{\bb}_\theta(\bX^{i}_{t_l,m},\hat\mu_{t_l,m})\|^2
-\|\sigma^{-1}\bb(\bX^{i}_{t_l,m},\hat\mu_{t_l,m})\|^2
\Big)\Delta t
\Bigg],
\end{aligned}
\end{equation}
where $\Delta \bX^{i}_{t_l,m}:=\bX^{i}_{t_{l+1},m}-\bX^{i}_{t_l,m}$.
We further assume $\sigma(x)\equiv \sigma I$ with a constant scalar $\sigma>0$.
Then, dropping all terms independent of $\theta$, \eqref{eq:disc_LL} reduces to:
\begin{equation}\label{eq:disc_LL_simplify}
\hat{\mathcal L}(\theta)
= -\frac{\Delta t}{2\sigma^2}\,\frac{1}{MLN}\sum_{m=1}^M\sum_{l=0}^{L-1}\sum_{i=1}^N
\Big\|
\hat{\bb}_\theta(\bX^{i}_{t_l,m},\hat\mu_{t_l,m}) - \bV^{i}_{t_l,m}
\Big\|^2 \;+\; C.
\end{equation}
Therefore, maximizing $\hat{\mathcal L}(\theta)$ is equivalent (up to a positive scaling)
to minimize the mean-squared regression loss:
\begin{equation}\label{eq:mse_final}
\theta^\star \in \arg\min_{\theta}\ 
\frac{1}{MLN}\sum_{m=1}^M\sum_{l=0}^{L-1}\sum_{i=1}^N
\bigl\|
\bV^{i}_{t_l,m} - \hat{\bb}_\theta(X^{i}_{t_l,m},\hat{\mu}_{t_l,m})
\bigr\|^2.
\end{equation}
While our objective function \eqref{eq:mse_final} is derived heuristically from the discretization of the path measure, its theoretical validity in the mean-field limit is supported by recent results on parameter estimation for McKean-Vlasov SDEs \citep{sharrock2021parameter}. They proved that under standard regularity conditions, the estimator obtained by maximizing the particle likelihood is consistent in the limit as $N \to \infty$ (Theorem 1.1 in \cite{sharrock2021parameter}). Furthermore, the estimator exhibits asymptotic normality with a convergence rate of $\mathcal{O}(N^{-1/2})$ (Theorem 1.2 in  \cite{sharrock2021parameter}). These results guarantee that minimizing our regression loss $\hat{\mathcal{L}}(\theta)$, which is equivalent to maximizing the log-likelihood, will recover the true mean-field drift $\bb(\bx, \mu)$ as the number of particles increases, provided the neural network has sufficient capacity.
We optimize \eqref{eq:mse_final} using Adam~\citep{kingma2014adam} with mini-batches to improve computational
efficiency and training stability. All gradients are obtained via automatic differentiation in JAX~\citep{jax2018github}.

\subsection{Numerical Result}
We present numerical experiments that validate the accuracy of the learned model and its ability to generalize to previously unseen initial configurations. Unless otherwise stated, we compare the learned mean-field model against reference particle simulations.
\subsubsection{1D Motsch-Tadmor Dynamics}
We begin by validating our framework on the Motsch-Tadmor model~\citep{motsch2011new,motsch2014heterophilious}. Unlike standard alignment models with additive forces, the Motsch-Tadmor dynamics feature a normalized interaction mechanism, where the influence of neighbors is weighted by their relative distance and normalized by the total interaction strength. The evolution of $N$ agents with scalar states $X^i_t \in \mathbb R$ is governed by:
\[
\dot X^i_t
= 
\frac{1}
{\sum_{k=1}^N \phi\left(|X^k_t-X^i_t|\right)}\sum_{j=1}^N \phi\left(|X^j_t-X^i_t|\right)
\left(X^j_t-X^i_t\right), 
\qquad i=1,\ldots,N.
\]
Here we set the population size $N = 16,000$ and the time horizon $T=2$. The interaction kernel is chosen as a Gaussian function $\phi(r)=\exp\left(-(r/\ell)^2\right)$ with a characteristic length $\ell=0.5$. The normalization term in the denominator introduces a non-trivial dependency on the global empirical measure, making the macroscopic drift strictly non-pairwise and highly nonlinear.
 We discretize the system using a forward Euler scheme with timestep $\Delta t = 1\times 10^{-2}$ to generate $M=100$ independent trajectories $\{X^i_{t_l,m}\}$, $l = 0,\cdots,200,m=1,\cdots,100$. For each realization $m$, initial distributions $X^i_{0,m}$ are sampled from a randomly generated multimodal distribution $\mu^m_0$. Specifically, each $\mu_0^m$ is constructed as a mixture of $2$-$8$ Gaussian components whose mean uniformly distributed in $[0,3]$ and variance is $0.25$. The mixture weights of different Gaussian is sampled from a symmetric Dirichlet distribution, producing a diverse set of initializations. The dataset is split into 97 trajectories for training and 3 for testing. The learned mean-field dynamics are simulated via the McKean-Vlasov equation driven by the trained MVNN:
\[
\dot X_t^{i,N}
= \hat{b}_\theta\left(X_t^{i,N},\,\hat{\mu}_t^N\right)
\qquad i=1,\ldots,N,
\]
where the empirical measure is defined as
\(
\hat{\mu}_t^N = \frac{1}{N}\sum_{j=1}^N \delta_{X_t^{j,N}}.
\)
Unlike pairwise interaction estimation, which aims to recover the microscopic interaction kernel, our evaluation focuses on the system’s macroscopic behavior. Specifically, we compare the predicted and true population densities over time to assess the quality of the learned mean-field dynamics. Figure~\ref{fig:1d_case1} compares the time evolution of the densities predicted by our model against the ground truth for three unseen initial conditions. The results demonstrate that the MVNN successfully captures the clustering and consensus formation inherent in the dynamics. Despite the complex normalization factor, the learned model accurately reproduces the merging of clusters and the preservation of density peaks without explicit knowledge of the microscopic interaction form.

\textbf{Comparisons with Gaussian Process:} Additionally, we compare the learned MVNN dynamics to predictions generated from a Gaussian process model with a Mat\'ern kernel \citep{feng2023learning}. For the comparison, the Gaussian process model was trained on $N = 16$ agents and $M = 9$ independent trajectories, and the system was discretized using a forward Euler scheme with $L = 20$ timesteps of size $\Delta t = 5\times 10^{-2}$ on the interval $[0,1]$. The MVNN model was trained on $N= 16,000$ agents and $M = 100$ trajectories, and the system was discretized using a forward Euler scheme with $L = 200$ timesteps of size $\Delta t = 10^{-2}$ on the interval $[0,2]$. The Gaussian process model is restricted to smaller training set sizes relative to the MVNN model due to the construction of a kernel matrix that scales with the total size of the training set. The initial distributions of each trajectory for both models are sampled from a Gaussian mixture model with $2$-$8$ components, whose mean is uniformly distributed in $[0,3]$, variance is 0.25, and mixture weights are sampled from a symmetric Dirichlet distribution. Figure \ref{fig:mvnn_gp_comparison} compares the time evolution of the densities predicted by the MVNN and Gaussian process models against the ground truth for $N = 16,000$ agents and three unseen initial conditions. Figure \ref{fig:mvnn_gp_error} shows the $L^2$ error between the true density and the predictions using the Gaussian process model, the MVNN model trained on 16 agents, 9 trajectories, and 20 timesteps (the same training set as the Gaussian process model), and the MVNN model trained on 16,000 agents, 100 trajectories, and 200 timesteps. The results show that MVNN matches the reference dynamics better than the Gaussian process model and achieves lower test error even when trained on a small number of agents and trajectories. MVNN is able to handle a large number of agents, which may be needed in the mean-field limit, and can also accurately account for the  normalization factor in the Motsch-Tadmor model. In contrast, the Gaussian process model is limited to a smaller number of agents, and it cannot accurately handle the normalization factor, since it requires the normalized interaction kernel to be symmetric. Figure \ref{fig:mvnn_gp_time} compares the average simulation times against the number of agents $N$ for the MVNN and Gausian process models over 10 trials. The simulation times for MVNN remain approximately constant as $N$ increases, whereas the simulation times for the Gaussian process model increase significantly as $N$ increases.

 \begin{figure}
     \centering
     \includegraphics[width=0.99\linewidth]{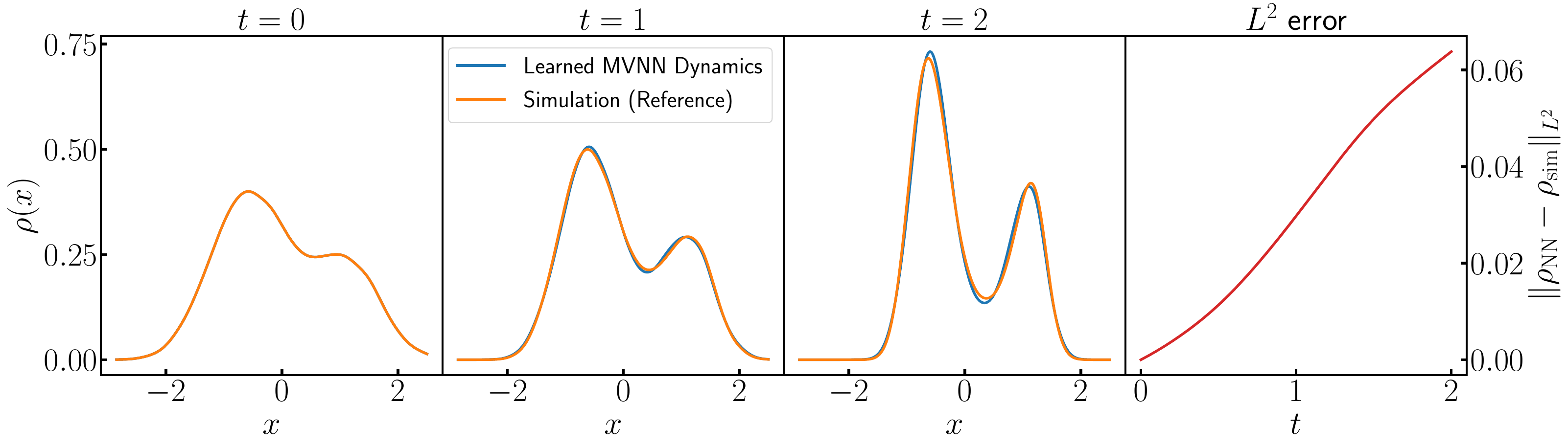}
     \includegraphics[width=0.99\linewidth]{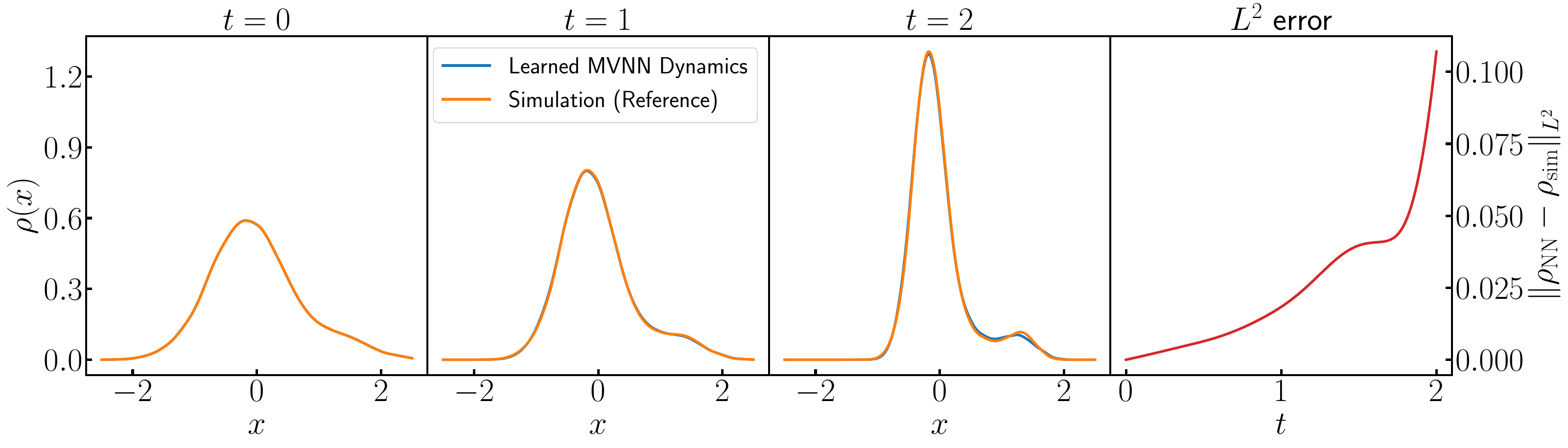}
     \includegraphics[width=0.99\linewidth]{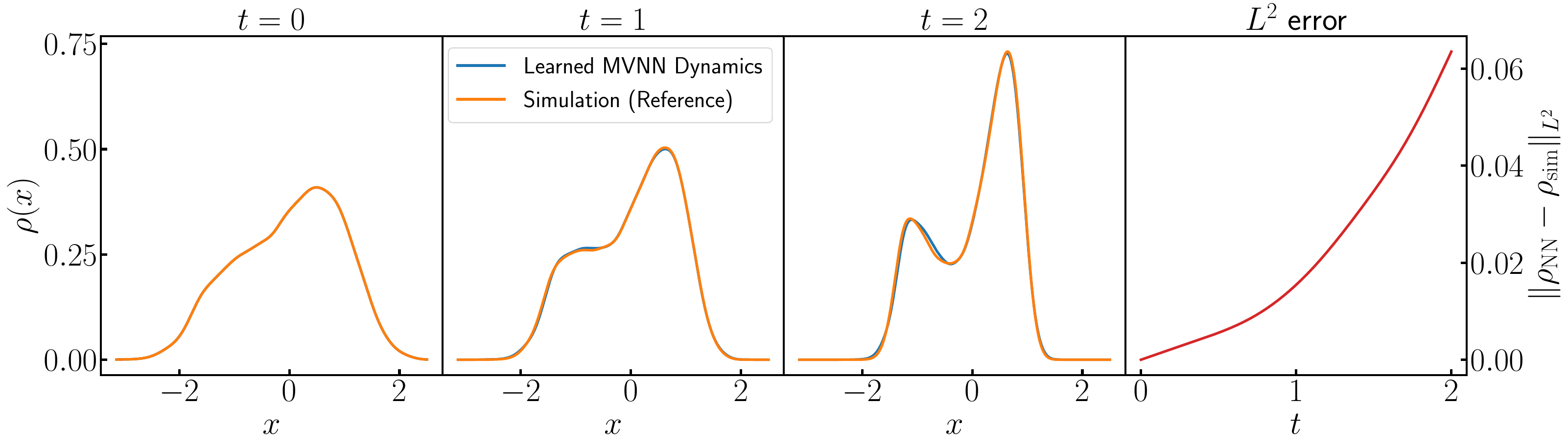}
     \caption{\textbf{1D Motsch-Tadmor dynamics}: Empirical density $\rho(x,t)$ for the 1D Motsch-Tadmor dynamics: comparison between the reference $N$-particle simulation (orange) and the MVNN-learned mean-field model (blue). Columns show $t=0,1,2$; rows correspond to three unseen initial distributions. Densities are estimated using Gaussian kernel density estimation. The $L^2$ error is computed between the KDE-smoothed densities.}
     \label{fig:1d_case1}
 \end{figure}

  \begin{figure}
     \centering
     \includegraphics[width=0.99\linewidth]{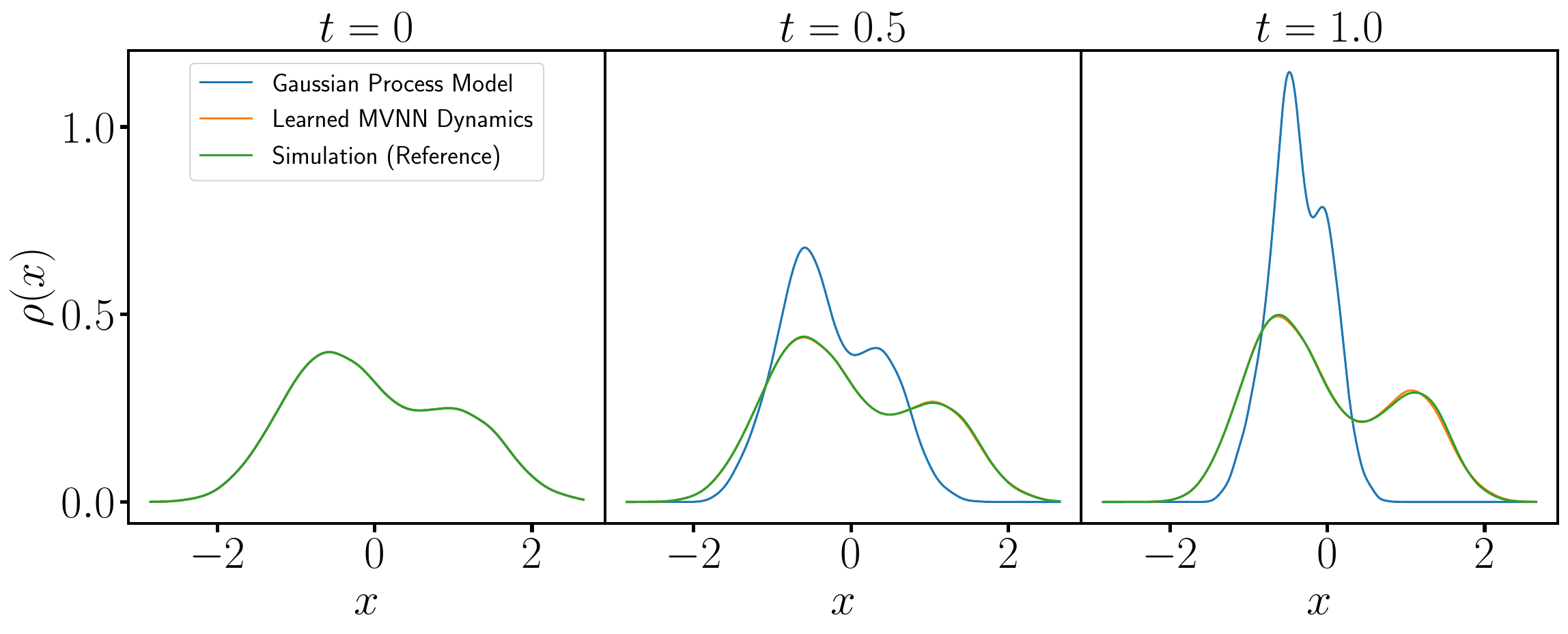}
     \includegraphics[width=0.99\linewidth]{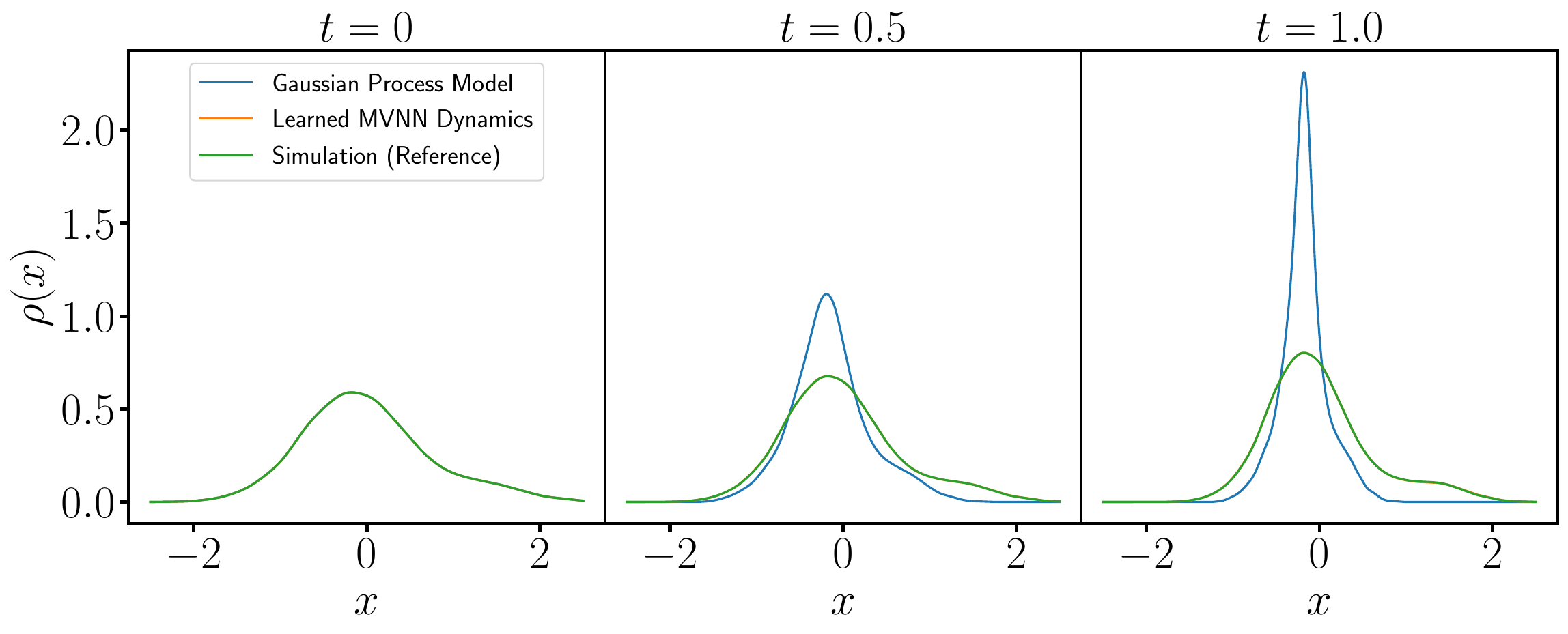}
     \includegraphics[width=0.99\linewidth]{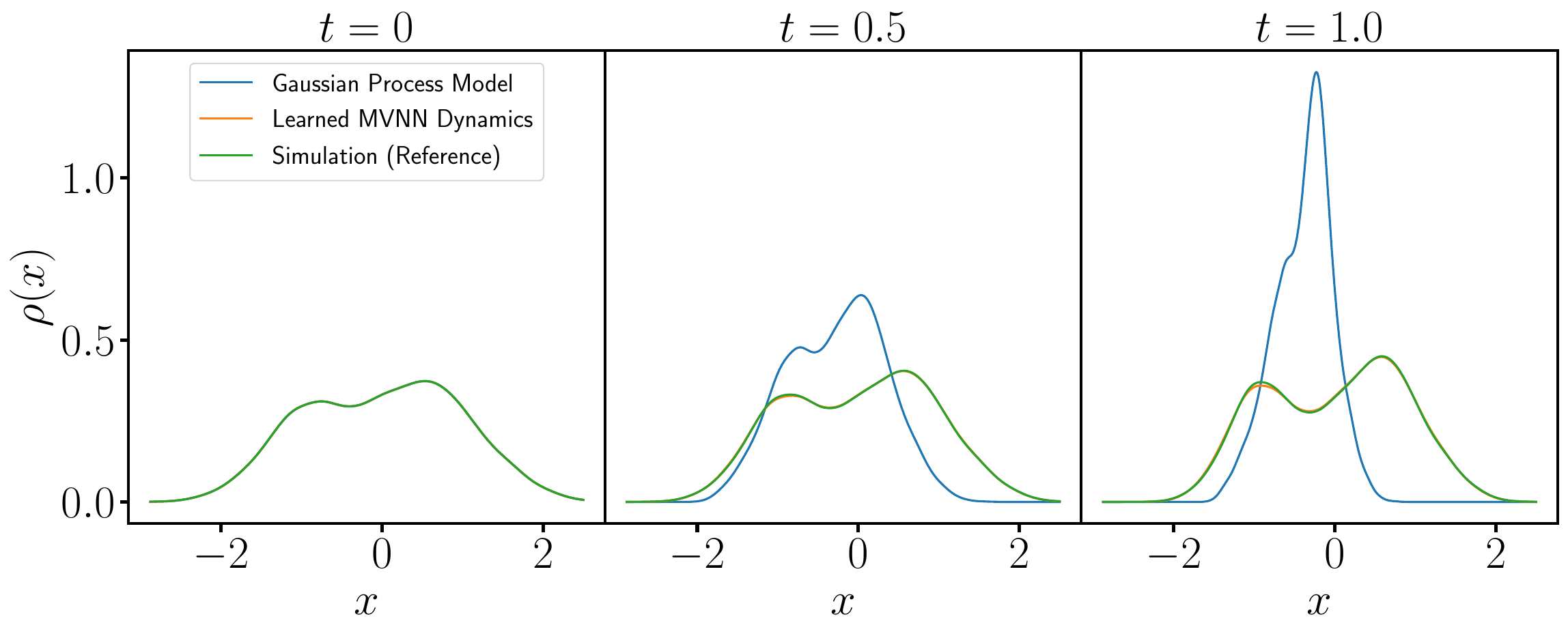}
     \caption{\textbf{Comparisons on 1D Motsch-Tadmor dynamics}: Empirical density $\rho(x,t)$ for the 1D Motsch-Tadmor dynamics: comparison between the reference $N$-particle simulation (green), the MVNN-learned mean-field model (orange), and the prediction from the Gaussian process model \citep{feng2023learning} (blue). Columns show $t=0,0.5,1$; rows correspond to three unseen initial distributions. Densities are estimated using Gaussian kernel density estimation.}
     \label{fig:mvnn_gp_comparison}
 \end{figure}

   \begin{figure}
     \centering
     \includegraphics[width=0.69\linewidth]{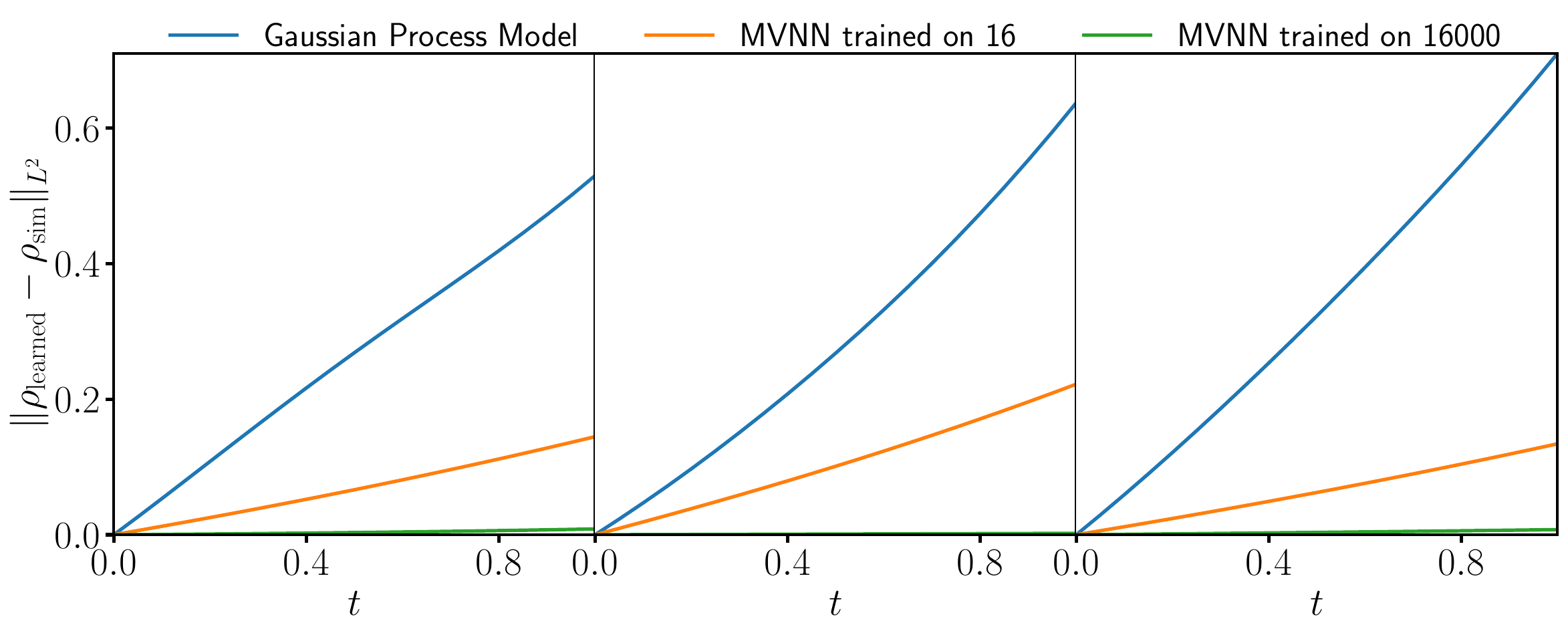}
     \caption{\textbf{Comparisons on 1D Motsch-Tadmor dynamics}: $L^2$ error for the 1D Motsch-Tadmor dynamics: comparison of the $L^2$ error for the Gaussian process model (blue), the MVNN model trained on $16$ agents, $9$ trajectories, and $20$ timesteps (orange), and the MVNN model trained on $16,000$ agents, $100$ trajectories, and $200$ timesteps (green). The $L^2$ error is computed between the KDE-smoothed densities. Columns correspond to three unseen initial distributions.}
     \label{fig:mvnn_gp_error}
 \end{figure}

 \begin{figure}
     \centering
     \includegraphics[width=0.5\linewidth]{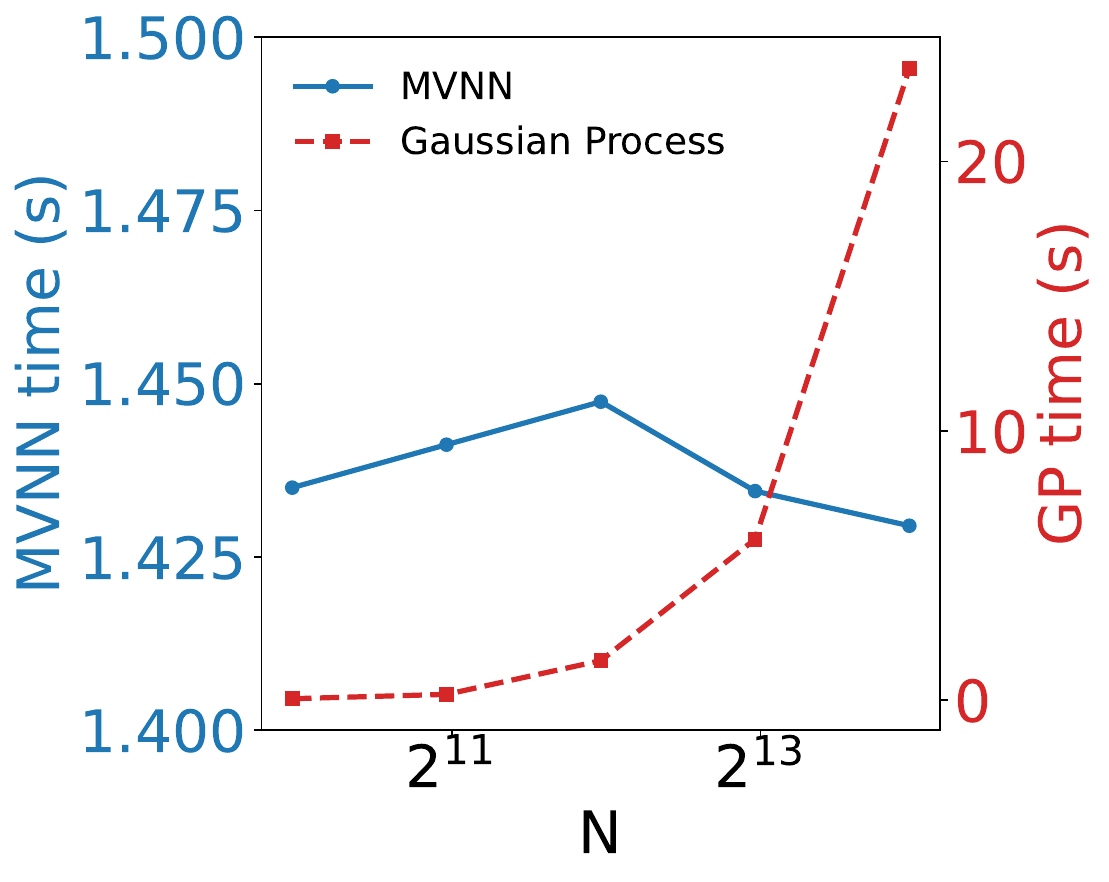}
     \caption{\textbf{Simulation Time Comparison}: Comparison of average simulation times (seconds) against number of agents $N$ for the MVNN and Gaussian process models over 10 trials.}
     \label{fig:mvnn_gp_time} 
 \end{figure}

\subsubsection{1D Stochastic Motsch-Tadmor Dynamics}
We extend our validation to the stochastic variant of the system, where the dynamics are driven by:
\[
\intd X_t^i
= \frac{1}
{\sum_{k=1}^N \phi\left(|X^k_t-X^i_t|\right)}\sum_{j=1}^N \phi\left(|X^j_t-X^i_t|\right)
\left(X^j_t-X^i_t\right)\,\intd t
+ \sigma\,\intd B_t^i,
\qquad i=1,\ldots,N,
\]
 with noise strength $\sigma = 0.1$. For the stochastic case, we employ the Euler–Maruyama method with time step $\Delta t = 10^{-2}$, whereas the remaining parameters are kept identical to the deterministic configuration. Consistent with our problem setup, the diffusion coefficient $\sigma$ is treated as a known constant, and the learning task focuses solely on identifying the effective drift field.The learned macroscopic dynamics are simulated via the corresponding McKean-Vlasov SDE driven by the trained MVNN:
\[
\intd X_t^{i,N}
= \hat{b}_\theta\left(X_t^{i,N},\,\hat{\mu}_t^N\right)\intd t
+ \sigma\intd B_t^{i,N},
\qquad i=1,\ldots,N,
\]
where $\hat\mu_t^N$ denotes the empirical measure defined above.
Figure~\ref{fig:stochastic_case_1} shows the time dynamics of the agent density under stochastic forcing and a comparison with the true simulation.
\begin{figure}[t]
    \centering
    \includegraphics[width=0.99\linewidth]{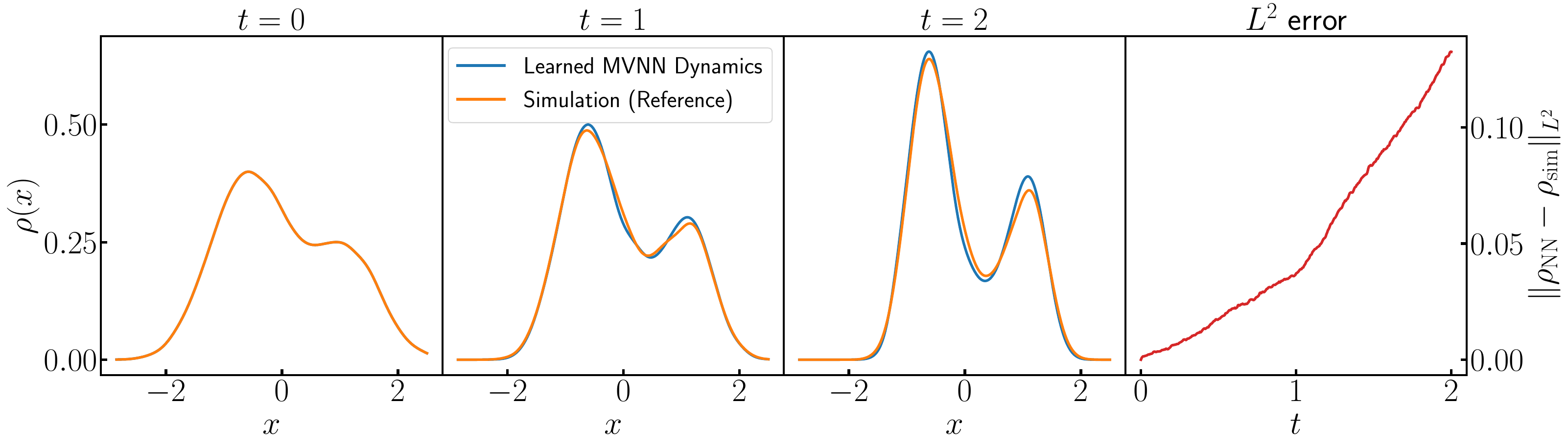}
    \caption{\textbf{Stochastic Motsch-Tadmor dynamics ($\sigma=0.1$)}: density evolution.
Empirical density $\rho(x,t)$ from the reference interacting-particle simulation (orange) and from the MVNN-learned McKean-Vlasov model (blue), shown at $t=0,1,2$ for an unseen initial distribution. Densities are estimated via Gaussian KDE, and the reported $L^2$ error is computed between the kernel-smoothed densities.}
    \label{fig:stochastic_case_1}
\end{figure}
\subsubsection{2D Aggregation Dynamics with Attraction-Repulsion}
We further evaluate the framework on a two-dimensional first-order swarm model governed by attractive-repulsive interactions. Unlike the consensus models discussed previously, this system exhibits rich spatial pattern formation, such as rings and clumps. The dynamics of the $N$ agents are described by:
\[
\dot{\bX}^i_t 
= \frac{1}{N}\sum_{j=1}^N 
\phi\left(\|\bX^j_t - \bX^i_t\|\right)(\bX^j_t - \bX^i_t),
\qquad i=1,\ldots,N,
\]
where $\bX^i_t \in \mathbb{R}^2$ denotes the position of agent $i$. The interaction kernel $\phi$ combines short-range repulsion and long-range attraction, modeled by a sum of Gaussians:$$\phi(r) = c_\text{rep} \exp\left(-(r/\ell_\text{rep})^2\right) - c_\text{att}\exp\left(-(r/\ell_\text{att})^2\right).$$We use the parameters $c_\text{rep} = 1.0$, $\ell_\text{rep} = 0.5$, $c_\text{att} = 0.7$, and $\ell_\text{att}=2.0$. The simulation is performed with $N=16{,}000$ agents for $200$ steps using a time step $\Delta t=10^{-2}$.
To test the model's ability to learn and represent complex geometric structures, we construct the training set using noisy annulus initial configurations. Specifically, for each trajectory, 
the initial position of each agent $i$ is then sampled via polar coordinates with additive Gaussian noise:
\[\begin{aligned}
\Theta_i &\sim \mathcal U(0,2\pi), \quad
\rho_i \sim \mathcal U\bigl(R_0-\tfrac{W}{2},\,R_0+\tfrac{W}{2}\bigr), \\
\bX^i_0 &= \rho_i\bigl(\cos\Theta_i,\sin\Theta_i\bigr) + \varepsilon_i, \quad \text{with} \quad
\varepsilon_i \sim \mathcal N(\mathbf 0,\,\sigma_0^2 I_2),
\end{aligned}
\]
where $I_2$ denotes the identity matrix. We generate 100 distinct initial distributions to form the dataset.
Figure~\ref{fig:ring} visualizes the evolution of the system initialized from a ring distribution. The learned mean-field dynamics successfully reproduce the stability of the ring structure and the correct contraction rate, demonstrating that the MVNN can capture effective potentials that support metastable geometric patterns.
\begin{figure}[h!]
    \centering
    \includegraphics[width=0.6\linewidth]{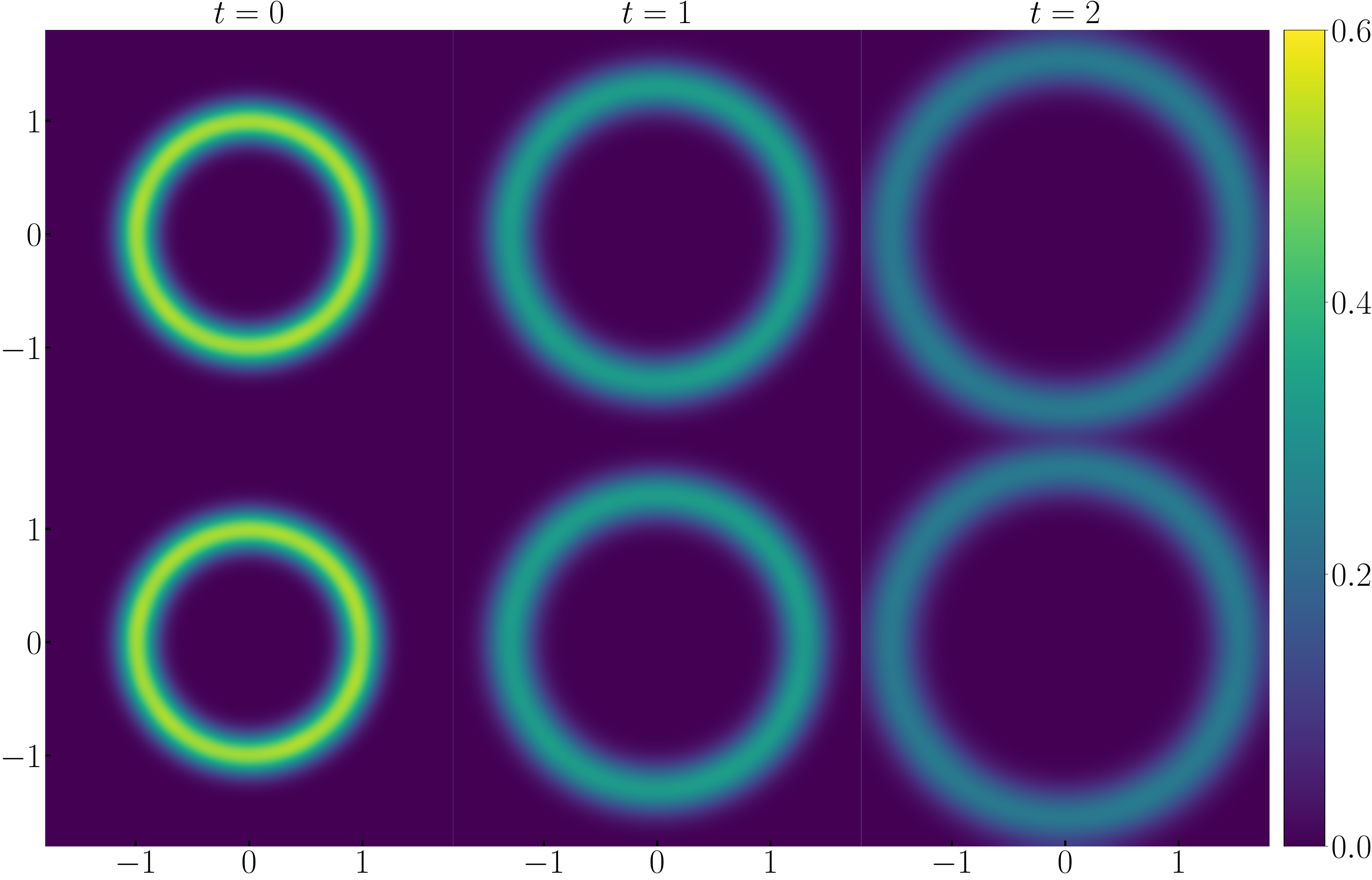}
    \caption{\textbf{2D aggregation model with ring-shaped initialization.} The upper row displays the ground truth particle trajectories, while the lower row shows the evolution predicted by the learned mean-field dynamics. The model accurately preserves the topological structure of the ring over time.}
    \label{fig:ring}
\end{figure}

To assess the generalization capability of the learned MVNN, we evaluate the model on initial distributions with topological structures and density profiles distinct from the training set. Specifically, we consider three test cases: a double-ring, a uniform disk, and a binary distribution exhibiting spatial heterogeneity (asymmetric density with a low-density region on the left and a high-density region on the right). The results are presented in Figures~\ref{fig:case1_2ring}, \ref{fig:case1_disk}, and \ref{fig:case1_hl}, respectively. In all cases, the learned mean-field dynamics (bottom rows) yield excellent agreement with the ground truth particle simulations (top rows). These results confirm that the MVNN has successfully learned the intrinsic interaction operator rather than merely overfitting to the geometry of the training data, thus providing robust predictions on unseen configurations.

\begin{figure}[h!]
    \centering
    \includegraphics[width=0.6\linewidth]{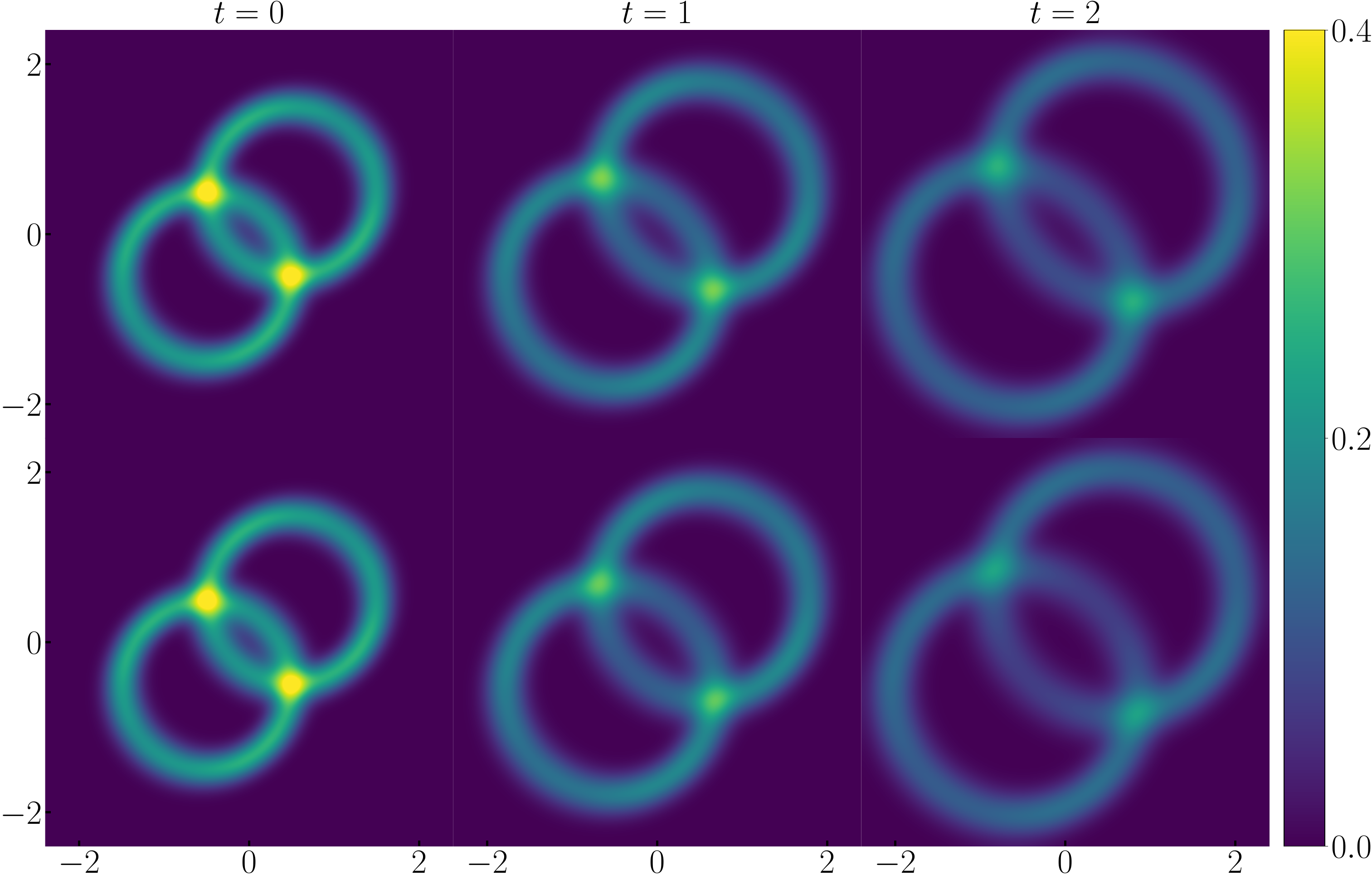}
    \caption{\textbf{2D aggregation model  with double-ring initialization.} Comparison between the ground truth particle system (upper row) and the learned mean-field dynamics (lower row). The model correctly reproduces the contraction of both rings despite never seeing this topology during training.}
    \label{fig:case1_2ring}
\end{figure}

\begin{figure}[h!]
    \centering
    \includegraphics[width=0.6\linewidth]{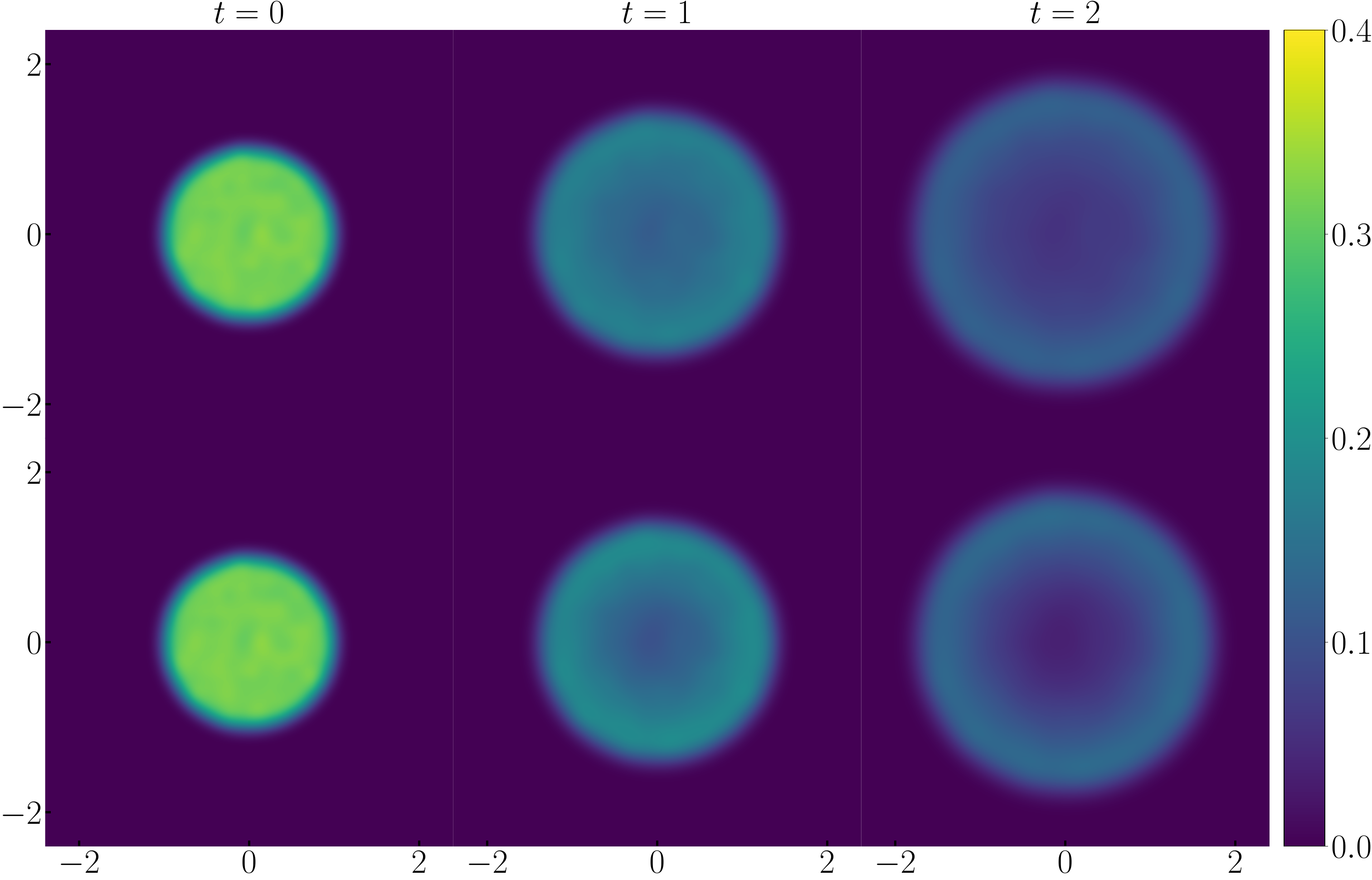}
    \caption{\textbf{2D aggregation model with disk-shaped initialization.} The learned dynamics (lower row) accurately capture the collapse of the uniform disk, matching the ground truth (upper row).}
    \label{fig:case1_disk}
\end{figure}

\begin{figure}[h!]
    \centering
    \includegraphics[width=0.6\linewidth]{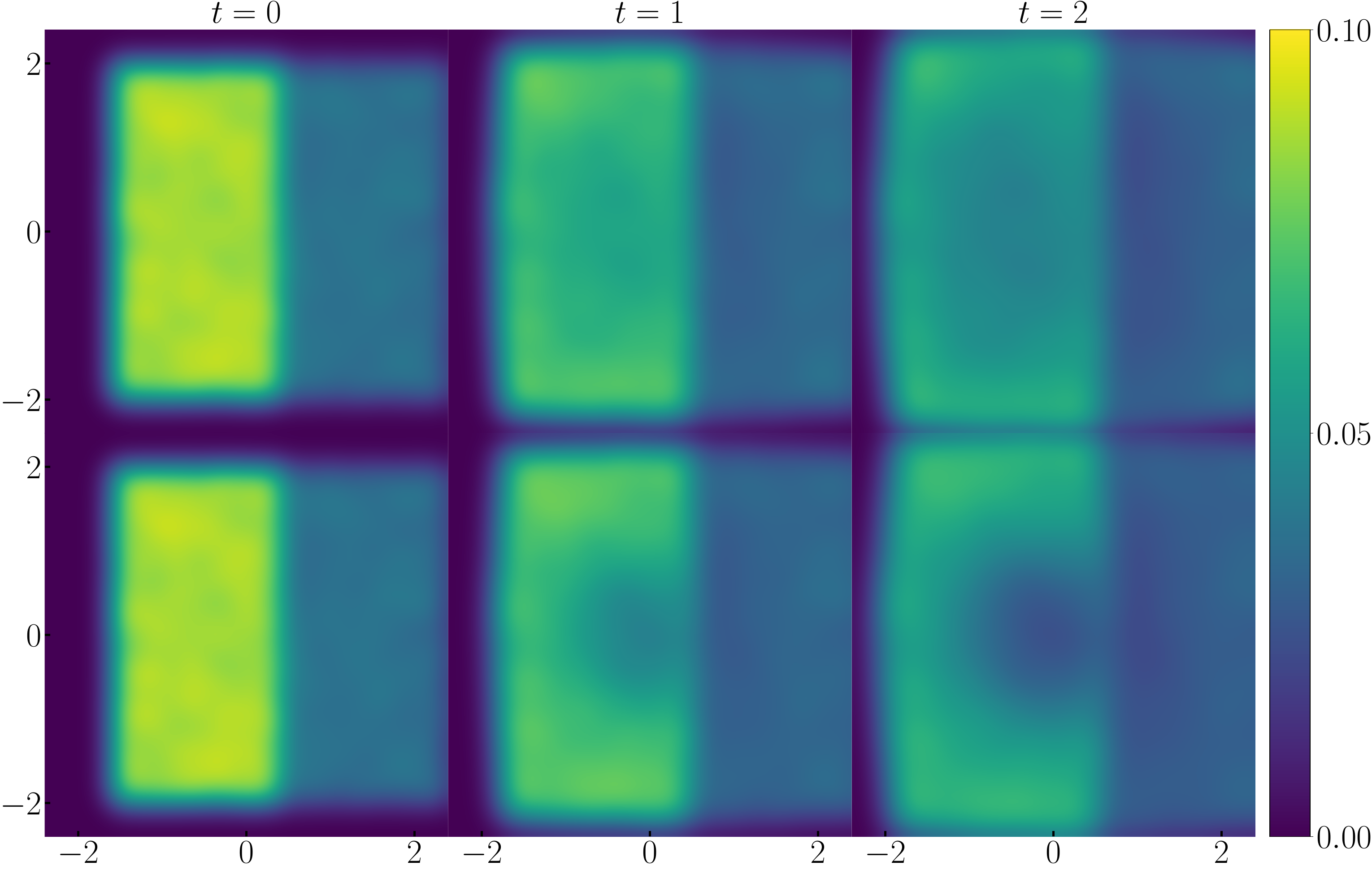}
    \caption{\textbf{2D aggregation model with binary asymmetric initialization.} Evolution of a system initialized with heterogeneous densities (low density left, high density right). The learned model (lower row) preserves the density gradient and correctly predicts the asymmetric aggregation process.}
    \label{fig:case1_hl}
\end{figure}

\section{Extensions to Systems with Heterogeneous Agent Groups}
Our method extends naturally to a wide class of interacting agent systems that arise in different applications, including systems involving multiple groups of agents. Let the agents be divided into $K$ distinct groups, with the $k$-th group consisting of $N_k$ agents whose states are denoted by $\bX^{i,k}$, for $i=1,\ldots,N_k$ and $k=1,\ldots,K$. 
The dynamics of the system can then be described by:
\[
\dot{\bX}_t^{i,k}
= \sum_{l=1}^K \frac{1}{N_l - \delta_{k,l}}
\sum_{\substack{j=1 \\ (i,k)\neq (j,l)}}^{N_l}
\phi_{k,l}\!\left(\|\bX_t^{i,k} - \bX_t^{j,l}\|\right)
\bigl(\bX_t^{i,k} - \bX_t^{j,l}\bigr),
\qquad i=1,\ldots,N_k,\;\; k=1,\ldots,K,
\]
where $\phi_{k,l}$ represents the interaction kernel between agents in group $k$ and group $l$. The corresponding mean-field dynamics can be written in the form:
\[
\dot{\bX}_t^{i,k} = \bb_k\!\left(\bX_t^{i,k},\,\mu_1,\ldots,\mu_K\right),
\qquad i=1,\ldots,N_k,\;\; k=1,\ldots,K,
\]
where each $\mu_k \in \mathcal{P}(\mathbb{R}^d)$ denotes the probability distribution of agents of group $k$.
The observed data consist of the positions of agents from different groups along multiple trajectories:
\[
\bX_{\mathrm{tr}} := 
\bigl\{\bX^{i,k}_{t_\ell,m}\bigr\}_{i=1,\ell=1,m=1,k=1}^{N_k,L,M,K},
\]
where $0 = t_1 < \cdots < t_L = T$ denote the observation times, 
$m$ indexes the $M$ independent trajectories, and $k=1,\ldots,K$ indexes the agent groups. 
For each group $k$, the corresponding velocities 
$\bV^{i,k}_{t_\ell,m}$ are approximated by finite differences.
Each trajectory $m$ is initialized from a collection of independent probability measures 
\(
\mu_{0,1}^m,\ldots,\mu_{0,K}^m,
\)
representing the initial distributions of the $K$ groups, 
yielding different realizations of the interacting multi-group system.
Our objective is to infer the mean-field interaction drifts 
\(
\bb_k(\bX,\mu_1,\ldots,\mu_K), \; k=1,\ldots,K,
\)
from these observations.
\subsection{Multi-Group Measure-Valued Neural Network}
We extend the proposed framework to heterogeneous systems composed of multiple interacting agent groups. We introduce the \emph{Multi-Group Measure-Valued Neural Network} (MG-MVNN) to capture the complex inter-group coupling. Consider a system with $K$ distinct groups, where the $k$-th group is characterized by its population distribution $\mu_k \in \mathcal{P}(\mathbb{R}^d)$. The effective mean-field dynamics for an agent $i$ in group $k$ are governed by a drift function $\bb_k$ dependent on the state and the distributions of all groups:
\[
\dot{\bX}_t^{i,k} = \bb_k\!\left(\bX_t^{i,k},\,\mu_1,\ldots,\mu_K\right).
\]
To allow for efficient learning of these high-dimensional dependencies, we parametrize each drift $\bb_k$ using a composite neural operator. This architecture maps the local agent state $\bX$ and the aggregated population features from all groups to the target drift. Specifically, for each groups $l \in \{1, \dots, K\}$, we employ a group-specific embedding network $\varphi_{\mathrm{emb},l}:\mathbb{R}^d\to\mathbb{R}^{r_{l}}$ to extract latent representations. The collective state of group $l$ is then summarized by the moment vector:$$\mathbf{z}_l(\mu_l) := \big\langle \varphi_{\mathrm{emb},l}(\cdot;\theta_{\mathrm{emb},l}),\,\mu_l\big\rangle = \int \varphi_{\mathrm{emb},l}(\mathbf{x})\,\mu_l(\mathrm{d}\mathbf{x}).$$
The drift for group $k$ is approximated by an interaction network $\varphi_{\mathrm{int},k}$ that consumes the local state and the concatenated global features of all groups:
$$
\bb_k(\bX,\mu_1,\ldots,\mu_K)
\;\approx\;
\varphi_{\mathrm{int},k}\Bigl(
\bX,\; \mathbf{z}_1(\mu_1), \ldots, \mathbf{z}_K(\mu_K);\;
\theta_{\mathrm{int},k}
\Bigr).
$$
This design enforces permutation invariance within each group while allowing for complex, asymmetric interactions between different groups.
Given a multi-group dataset $\mathcal{D}_{\mathrm{obs}} = \{ (\bX_{t_\ell,m}^{i,k}, \bV_{t_\ell,m}^{i,k}) \}$, we replace the theoretical moments $\mathbf{z}_l(\mu_l)$ with their Monte Carlo estimates based on the empirical measures $\hat{\mu}_{t_\ell,m}^l = \frac{1}{N_l}\sum_{j=1}^{N_l}\delta_{\bX_{t_\ell,m}^{j,l}}$. The empirical population feature for group $l$ becomes:
$$\hat{\mathbf{z}}_{t_\ell,m}^l = \frac{1}{N_l}\sum_{j=1}^{N_l}\varphi_{\mathrm{emb},l}(\bX_{t_\ell,m}^{j,l};\theta_{\mathrm{emb},l}).$$
Consequently, the predicted mean-field drift is given by:$$\hat{\bb}_{\theta,k} = \varphi_{\mathrm{int},k}\!\left(
\bX_{t_\ell,m}^{i,k},\; \hat{\mathbf{z}}_{t_\ell,m}^1, \ldots, \hat{\mathbf{z}}_{t_\ell,m}^K;\; \theta_{\mathrm{int},k}
\right).$$The network parameters $\Theta = \{ \theta_{\mathrm{int},k}, \theta_{\mathrm{emb},k} \}_{k=1}^K$ are jointly learned by minimizing the global trajectory matching error:$$\mathcal{L}(\Theta)
= \frac{1}{M L}\sum_{m=1}^M \sum_{\ell=1}^L
\sum_{k=1}^K \frac{1}{N_k}
\sum_{i=1}^{N_k}
\bigl\|
\bV_{t_\ell,m}^{i,k}
- \hat{\bb}_{\theta,k}
\bigr\|^2.$$The optimization is performed using Adam~\citep{kingma2014adam} within the \texttt{JAX} framework~\citep{jax2018github}, utilizing automatic differentiation to compute gradients through the empirical averages. To ensure scalability and training stability, we employ randomized mini-batching over the agent-time indices.

\subsection{Numerical Result for MG-MVNN}
We evaluate the multi-group framework on a hierarchical system with asymmetric interactions, designed to mimic a stratified communication network. The system comprises $K=3$ groups with population sizes $(N_1, N_2, N_3) = (16{,}000,\, 4{,}000,\, 200)$, labeled as Group 1, Group 2 , and Group 3.
Each pair of group $(k,l)$ interacts through a group-dependent influence kernel:
\[
\phi_{k,l}(r)
= D_{k,l}\,
\exp\!\left(1 - \frac{1}{1 - \left|\nicefrac{r}{R_l}\right|^{10}}\right)
\mathbf{1}_{(-1,1)}\!\left(\frac{r}{R_l}\right),
\]
where $\mathbf{1}_{(-1,1)}$ denotes the indicator function on $(-1,1)$.
The coefficients $D_{k,l}$ quantify the influence strength of group $l$ on group $k$,
while $R_l$ specifies the interaction radius of group $l$.
In this hierarchical setting, the influence matrix $(D_{k,l})$ is given by:
\[
D =
\begin{pmatrix}
5  & 10 & 0 \\
0  & 2  & 5 \\
0  & 0  & 1
\end{pmatrix},
\qquad
(R_1, R_2, R_3) = (1.0,\, 2.5,\, 5.0).
\]
The upper-triangular structure of $D$ enforces a directional flow of influence from higher-ranking group to lower ones, while the diagonal terms represent intra-group cohesion. Notably, the interaction radii increase with the hierarchy level ($R_3 > R_2 > R_1$), so Group 3 acts over the broadest spatial range, whereas Group 1 interacts most locally.

The training dataset consists of $M=100$ trajectories initialized from random Gaussian mixtures, as described in the single-group experiments. To assess the model's ability to capture complex cross-group coupling, we test on two specific out-of-distribution scenarios where the groups are initially spatially separated.Figures~\ref{fig:multi_species_case3} and~\ref{fig:multi_species_case4} display the evolution of the groups densities. The MVNN accurately predicts the hierarchical entrainment process: Group 3 moves independently to form a consensus; Group 2 is pulled towards Group 3; and Group 1 subsequently clusters around Group 2. This sequential locking of dynamics confirms that our multi-group architecture correctly learns the asymmetric causal structure encoded in the influence matrix.
\begin{figure}
    \centering
    \includegraphics[width=0.99\linewidth]{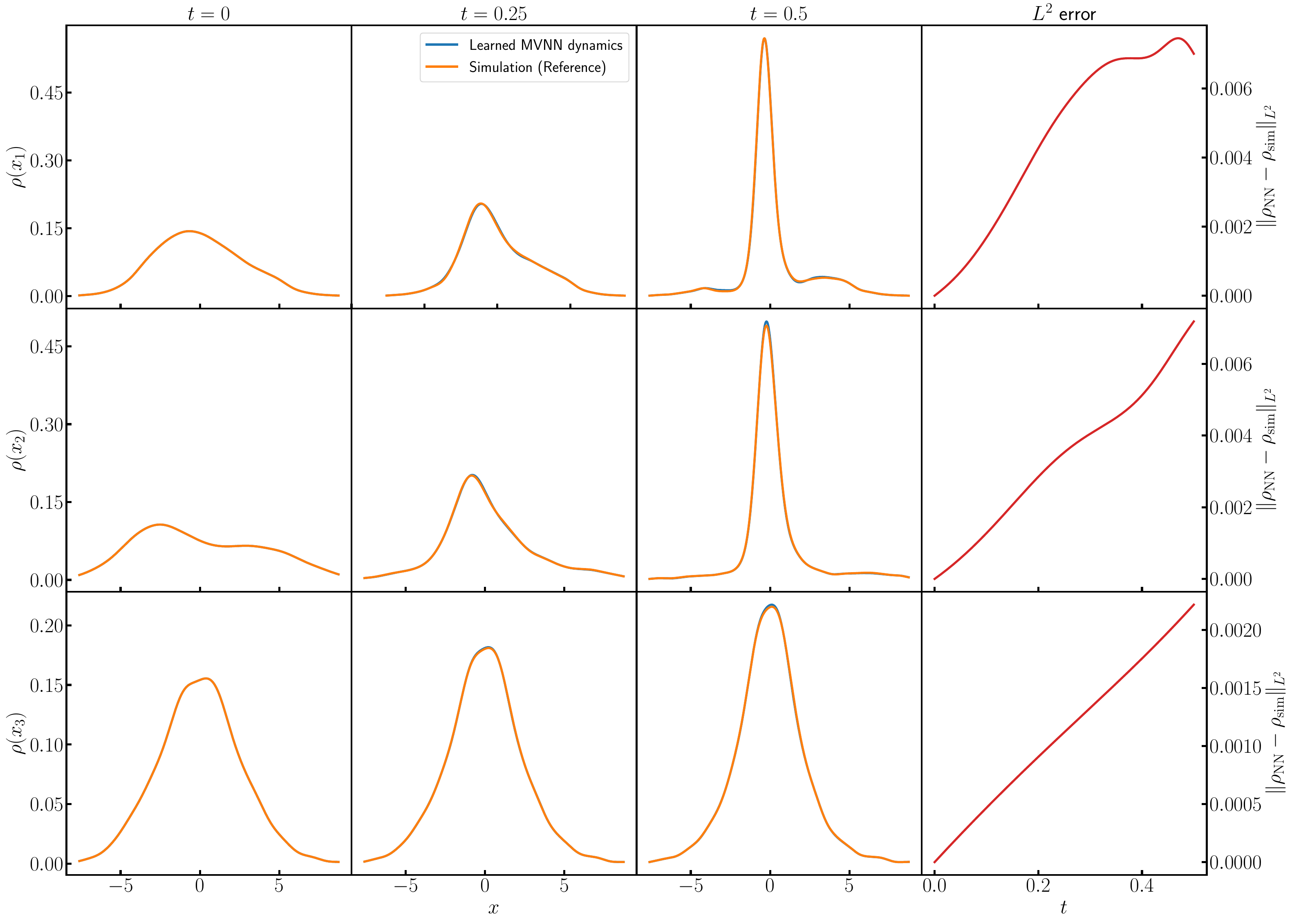}
    \caption{\textbf{Hierarchical dynamics:} Initial condition 1. Evolution of the multi-group system initialized with spatially separated populations. The rows correspond to Group 1, Group 2, and Group 3. The learned model (blue) faithfully reproduces the reference particle dynamics (orange), capturing the directional information flow from Group 3 down to Group 1.}
    \label{fig:multi_species_case3}
\end{figure}

\begin{figure}
    \centering
    \includegraphics[width=0.99\linewidth]{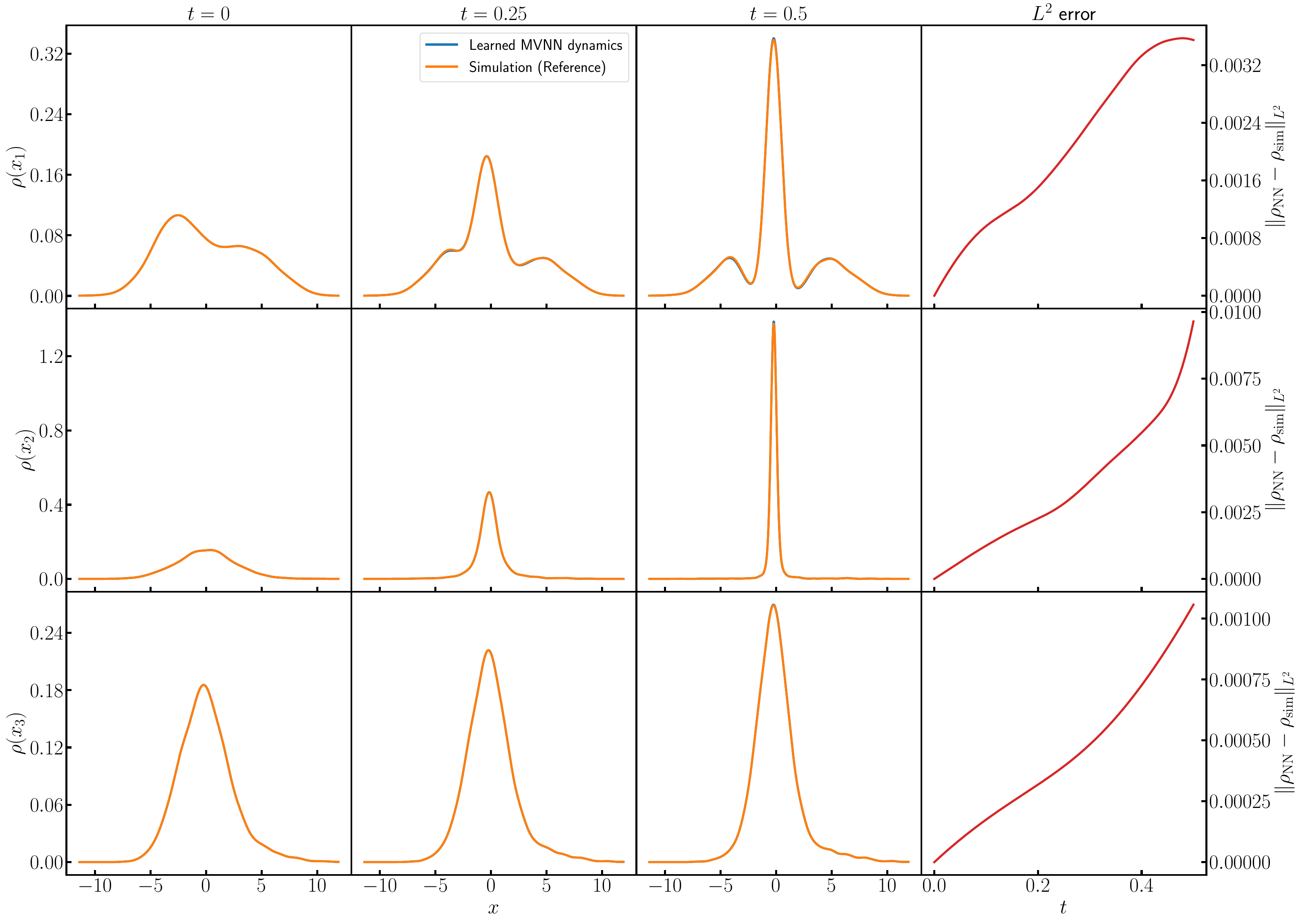}
    \caption{\textbf{Hierarchical dynamics:} Initial condition 2. Evolution of the multi-group system initialized with spatially separated populations. The rows correspond to Group 1, Group 2, and Group 3. The learned model (blue) accurately reproduces the reference particle dynamics (orange), capturing the directional information flow from Group 3 down to Group 1.}
    \label{fig:multi_species_case4}
\end{figure}

\section{Second-Order Dynamics}
The proposed framework can be applied to second-order interacting particle/agent systems. 
Consider the second-order McKean–Vlasov stochastic differential equation for particles involving position $(\bX_t)$ and velocity $(\bV_t)$:
\begin{align*}
    \intd \bX_t &= \bV_t \intd t \\
    \intd \bV_t &= \bb(\bX_t, \bV_t,f_t) \intd t + \sigma(\bX_t, \bV_t, f_t)\intd \mathbf{B}_t,
\end{align*} 
where $f_t = \mathcal L\left(\bX_t, \bV_t\right)$ describes the law of the pair $\left(\bX_t, \bV_t\right)$, and $\mathbf{B}_t$ is a $d\mhyphen$dimensional Wiener process. We can approximate the second-order McKean-Vlasov dynamics by a system of $N$ stochastic differential equations involving the position-velocity pairs $\left(\bX_t^{i,N},\bV_t^{i,N}\right)_{1\leq i\leq N}$, where $N$ is sufficiently large. The initial conditions \(
\left(\bX_0^{i,N}, \bV_0^{i,N}\right)_{1\leq i \leq N} 
\) are independent and identically distributed with law $\mu_0$ and each particle evolves according to: 
\begin{align*}
    \intd \bX^{i,N}_t &= \bV^{i,N}_t \intd t\\
    \intd \bV^{i,N}_t &= \bb(\bX^{i,N}_t,\bV_t^{i,N},\mu^N_t)\intd t + \sigma(\bX^{i,N}_t,\bV_t^{i,N},\mu^N_t)\intd \mathbf{B}^{i,N}_t,
\end{align*}
where $\mathbf{B}_t^i$ are  $d\mhyphen$dimensional Wiener processes, \(\mu_t^N = \frac{1}{N} \sum_{j=1}^N \delta_{\left(\bX^{j,N}_t,\bV^{j,N}_t\right)}\) denotes the empirical measure of the particle system, and $\delta$ is the Dirac measure. We again focus on the drift term defined here as $\bb:\mathbb R ^d \times \mathbb R ^d \times \mathcal P(\mathbb R^d \times \mathbb R ^d ) \to \mathbb R^d$ and treat the diffusion $\sigma$ as constant or zero. Note that $\mathcal P (\mathbb R^d \times \mathbb R ^d)$ is the space of probability measures on $\mathbb R^d \times \mathbb R ^d$. 


Our observations are positions and velocities along $M$ trajectories at the times $0=t_0<t_1<\cdots <t_L=T$: 
\begin{equation*}
    \left(\bX_{\text{tr}}, \bV_{\text{tr}}\right) :=\left(\{\bX^i_{t_l,m}\}_{i=1,l=0,m=1}^{N,L,M}, \{\bV^i_{t_l,m}\}_{i=1,l=0,m=1}^{N,L,M} \right).
\end{equation*}
The accelerations $\bA^i_{t_l,m}$ are approximated using first-order finite differences:
\begin{equation*}
    \bA^i_{t_l,m} := \dfrac{\bV^i_{t_{\ell+1},m} -\bV^i_{t_{\ell},m}}{t_{\ell+1}-t_\ell}, \qquad \ell = 0,\cdots, L-1.
\end{equation*}
For each trajectory $m$, the initial conditions $\left(\bX^i_{0,m},\bV^i_{0,m}\right)$ are independently sampled from the probability measure \(\mu_{0,m}\) for $i=1,\cdots,N$.
Our goal is to learn the mean field interaction kernels $\bb(x,v,\mu)$ from the observations.

\subsection{Measure-Valued Neural Network}
Similar to the first order dynamics, we consider approximating $\bb$ by a composition of two neural networks, $\varphi_\text{emb}$ and $\varphi_\text{int}$:
\begin{equation*}
        \bb_\theta(\bx, \bv, \mu) \approx
\varphi_\text{int}\left(\bx, \bv, \big\langle \varphi_\text{emb}(\cdot,\cdot; \theta_\text{emb}), \mu \big\rangle; \theta_\text{int}\right),
\end{equation*}
where $\mu \in \mathcal{P}_2(\mathbb{R}^d\times \mathbb{R}^d)$, $\varphi_\text{emb}(\cdot,\cdot;\theta_\text{emb}):\mathbb{R}^d\times \mathbb{R}^d\to\mathbb{R}^k$ maps particle positions and velocities to their feature representations, and $\varphi_\text{int}(\cdot,\cdot,\cdot;\theta_\text{int}): \mathbb{R}^d\times\mathbb{R}^d\times\mathbb{R}^k\to \mathbb{R}^d$ learns the drift as a function of the position, velocity, and feature embedding $\langle \varphi_\text{emb},\mu\rangle$. Here $\mathcal{P}_2(\mathbb{R}^d\times\mathbb{R}^d)$ is the space of probability measures on $\mathbb{R}^d\times\mathbb{R}^d$ with finite second moments:
\begin{equation*}
    \mathcal{P}_2(\mathbb{R}^d\times\mathbb{R}^d)=\left\{\mu \text{ probability measure on } \mathbb{R}^d\times\mathbb{R}^d \ \Big| \int_{\mathbb{R}^d\times\mathbb{R}^d} \|\mathbf{x}\|^2+\|\mathbf{y}\|^2d\mu(\mathbf{x},\mathbf{y})<\infty\right\}.
\end{equation*}

We have that for the empirical measure:
\begin{equation*}
    \langle \varphi_\text{emb},\mu_t^N\rangle = \dfrac{1}{N} \sum_{j=1}^N \varphi_\text{emb}\left(\bX_t^{j,N},\bV_t^{j,N};\theta_\text{emb}\right),
\end{equation*}

hence,
\begin{equation*}
    \bb_\theta(\bX,\bV,\mu_t^N)\approx
\varphi_\text{int}\left(\bX, \bV, \frac{1}{N}\sum_{j=1}^N \varphi_\text{emb}(\bX^{j,N}_t, \bV^{j,N}_t; \theta_\text{emb}); \theta_\text{int}\right).
\end{equation*}

The dynamics of the corresponding $N$-particle system are described by:
\begin{align*}
    \intd \bX_t^{\theta,i,N} &= \bV_t^{\theta,i,N} dt\\
    \intd \bV_t^{\theta,i,N} &= \bb_\theta\left(\bX_t^{\theta,i,N},\bV_t^{\theta,i,N,},\mu_t^{\theta,N}\right) \intd t + \sigma \intd \bB_t^{i,N},
\end{align*}
where $\mu_t^{\theta,N}=\frac{1}{N}\sum_{i=1}^N \delta_{\left(\bX_t^{\theta,i,N},\bV_t^{\theta,i,N}\right)}$ is the empirical measure of the particle system. The limiting McKean-Vlasov stochastic differential equation for a single representative particle with position $\bX^\theta_t$ and velocity $\bV^\theta_t$ is:
\begin{equation}
\begin{split}
\label{eq:mckean-vlasov-second-order}
    \intd \bX_t^\theta &= \bV_t^\theta \intd t\\
    \intd \bV_t^\theta &= \bb_\theta\left(\bX_t^\theta,\bV_t^\theta,f^\theta_t\right)\intd t + \sigma \intd \bB_t\\
    &=\varphi_\text{int}\left(\bX_t^\theta,\bV_t^\theta,\int \varphi_\text{emb}(\bx,\bv)f_t^\theta(\bx,\bv)\intd \bx \intd \bv \right)\intd t + \sigma \intd \bB_t,
\end{split}
\end{equation}
where $f_t^\theta = \mathcal{L}\left(\bX_t^\theta,\bV_t^\theta\right)$ is the law of the position-velocity pair $\left(\bX_t^\theta,\bV_t^\theta\right)$ at time $t$. The Fokker-Planck equation for $(f_t^\theta)_{t\geq0}$ in weak form is given by:
\begin{align*}
    \dfrac{\intd}{\intd t} \langle f_t^\theta,\psi \rangle &= \left\langle f_t^\theta,\bv\cdot\nabla_\bx\psi+\bb_\theta(\bx,\bv,f_t)\cdot\nabla_\bv\psi+\dfrac{1}{2}\sigma^2\Delta_\bv\psi\right\rangle\\
    &= \left\langle f_t^\theta,\bv\cdot\nabla_\bx \psi+\varphi_\text{int}\left(\bx,\bv,\int\varphi_\text{emb}(\bx,\bv)f_t^\theta(\bx,\bv)\intd \bx \intd \bv\right)\cdot\nabla_\bv\psi+\dfrac{1}{2}\sigma^2\Delta_\bv \psi\right\rangle,
\end{align*}
for any smooth test function $\psi:\mathbb{R}^d\to \mathbb{R}$ with compact support.

\subsection{Learning Objective and Optimization}
Given the observations
\begin{equation*}
    \mathcal{D}_\text{obs} = \left\{\left(\bX^i_{t_l,m},\bV^i_{t_l,m},\bA^i_{t_l,m}\right)\right\}^{N,L,M}_{i=1,l=0,m=1},
\end{equation*}
the predicted mean-field drift from the MVNN is:
\begin{equation*}
    \hat{\bb_\theta}\left(\bX^i_{t_l,m},\bV^i_{t_l,m},\hat{\mu}_{t_l,m}\right) = \varphi_\text{int}\left(\bX^i_{t_l,m}, \bV^i_{t_l,m}, \frac{1}{N}\sum_{j=1}^N \varphi_\text{emb}(\bX^j_{t_l,m}, \bV^j_{t_l,m}; \theta_\text{emb}); \theta_\text{int}\right).
\end{equation*}

We learn the model parameters $\theta=(\theta_\text{int},\theta_\text{emb})$ by minimizing the differences between the predicted and observed trajectories. Let $\mathbb{P}^\theta$ be the measure induced by the solution $\left(\bX^\theta_s,\bV^\theta_s\right)_{s\in[0,t]}$ of the McKean-Vlasov SDE \eqref{eq:mckean-vlasov-second-order}. By Girsanov's theorem, assuming the usual regularity conditions, we have the log-likelihood function
\begin{align*}
    \mathcal{L}_t(\theta):=\log\dfrac{\intd \mathbb{P}^\theta}{\intd \mathbb{P}^\bb} = &\int_0^t \left\langle\hat{\bb}_\theta\left(\bX_s,\bV_s,\mu^\theta_s\right)-\bb\left(\bX_s,\bV_s,\mu_s\right),\left(\sigma(\bX_s,\bV_s)\sigma(\bX_s,\bV_s)^\top\right)^{-1}\intd \bV_s\right\rangle \\
    &-\dfrac{1}{2} \int_0^t \left(\|\sigma(\bX_s,\bV_s)^{-1}\hat{\bb}_\theta(\bX_s,\bV_s,\mu^\theta_s)\|^2-\|\sigma(\bX_s,\bV_s)^{-1}\bb(\bX_s,\bV_s,\mu_s)\|^2\right) \intd s.
\end{align*}
We assume $\sigma(x,v)\equiv \sigma \mathbf{I}$, where $\sigma > 0$ is a constant, and approximate the stochastic integral using the Euler-Maruyama method to get the discrete log likelihood:
\begin{align*}
    \hat{\mathcal{L}}(\theta) = &\dfrac{1}{MLN}\sum\limits_{m=1}^M\sum\limits_{l=0}^{L-1}\sum\limits_{i=1}^N\bigg[\left\langle\hat{\bb}_\theta(\bX^i_{t_l,m},\bV^i_{t_l,m},\hat{\mu}_{t_l,m})-\bb(\bX^i_{t_l,m},\bV^i_{t_l,m},\hat{\mu}_{t_l,m}),(\sigma\sigma^\top)^{-1} \Delta\bV^i_{t_l,m}\right\rangle\\
    &-\dfrac{1}{2}\left(\|\sigma^{-1}\hat{\bb}_\theta(\bX^i_{t_l,m},\bV^i_{t_l,m},\hat{\mu}_{t_l,m})\|^2-\|\sigma^{-1}\bb(\bX^i_{t_l,m},\bV^i_{t_l,m},\hat{\mu}_{t_l,m})\|^2\right)\Delta t\bigg],
\end{align*}
where $\hat{\mu}_{t_l,m} = \frac{1}{N}\sum_{j=1}^N \delta_{\left(\bX^j_{t_l,m},\bV^j_{t_l,m}\right)}$ is the empirical measure for trajectory $m$ at time $t_l$, and $\Delta \bV^i_{t_l,m}:=\bV^i_{t_{l+1},m}-\bV^i_{t_l,m}$. Dropping terms independent of $\theta$ yields:
\begin{equation*}
    \hat{\mathcal{L}}(\theta) = -\dfrac{\Delta t}{2\sigma^2} \dfrac{1}{MLN} \sum\limits_{m=1}^M\sum\limits_{l=0}^{L-1}\sum\limits_{i=1}^N \left\|\hat{\bb}_\theta(\bX^i_{t_l,m},\bV^i_{t_l,m},\hat{\mu}_{t_l,m})-\bA^i_{t_l,m}\right\|^2+C.
\end{equation*}
Hence, maximizing the log likelihood $\hat{\mathcal{L}}(\theta)$ is equivalent to minimizing the mean-squared error loss:
\begin{equation}
\label{eq:second-order-MSE}
    \theta^* \in \arg\min_\theta \dfrac{1}{MLN}\sum\limits_{m=1}^M\sum\limits_{l=0}^{L-1}\sum\limits_{i=1}^N\left\|\bA^i_{t_l,m}-\hat{\bb}_\theta(\bX^i_{t_l,m},\bV^i_{t_l,m},\hat{\mu}_{t_l,m})\right\|^2.
\end{equation}
The numerical optimization of \eqref{eq:second-order-MSE} is performed using Adam~\citep{kingma2014adam} with mini-batches. 

\subsection{Numerical Result}
We present numerical experiments that validate the accuracy of our MVNN model for second-order systems and show its ability to generalize to unseen initial position configurations. We compare the dynamics from the learned mean-field model to the true dynamics from reference particle simulations.
\subsubsection{Second-Order Attraction-Repulsion Swarming Model}
We consider the second-order attraction repulsion swarming model in two dimensions:
\[
\ddot{\bX}^i_t 
= \frac{1}{N}\sum_{j=1}^N 
\phi\left(\|\bX^j_t - \bX^i_t\|\right)(\bX^j_t - \bX^i_t),
\qquad i=1,\ldots,N,
\]
where $\bX^i_t \in \mathbb{R}^2$ denotes the position of agent $i$, and: \[
\phi(r) = c_\text{rep} \exp(-(r/\ell_\text{rep})^2) - c_\text{att}\exp(-(r/\ell_\text{att})^2) 
\] 
with $c_\text{rep} = 1.0 $, $\ell_\text{rep} = 0.5$, $c_\text{att} = 0.7$, and $\ell_\text{at}=2.0$. 
We use the forward Euler method to simulate $100$ trajectories with $N=16{,}000$ agents and $200$ steps with time step $\Delta t=10^{-2}$. We construct our training set with $80\%$ of our initial positions sampled from a noisy annulus with random radius and width and the remaining $20\%$ of the initial positions generated from a noisy double annuli with identical random radius and width. In particular, for each trajectory $m\in\{1,\cdots,80\}$, the initial position of agent $i$ is sampled using polar coordinates with additive Gaussian noise:
\[\begin{aligned}
\Theta_i &\sim \mathcal U(0,2\pi), \quad
\rho_i \sim \mathcal U\bigl(R_0-\tfrac{W}{2},\,R_0+\tfrac{W}{2}\bigr), \\
\bX^i_0 &= \rho_i\bigl(\cos\Theta_i,\sin\Theta_i\bigr) + \varepsilon_i, \quad \text{with} \quad
\varepsilon_i \sim \mathcal N(\mathbf 0,\,\sigma_0^2 I_2),
\end{aligned}
\]
where $\mathbf{I}_2$ is the $2\times 2$ identity matrix. For each trajectory $m\in\{81,\cdots,100\}$, the initial position of half the agents, i.e. agents $i\in\{1,\cdots,8000\}$, is generated from:
\[\begin{aligned}
\Theta_i &\sim \mathcal U(0,2\pi), \quad
\rho_i \sim \mathcal U\bigl(R_0-\tfrac{W}{2},\,R_0+\tfrac{W}{2}\bigr), \\
\bX^i_0 &= (0.5,0.5) + \rho_i\bigl(\cos\Theta_i,\sin\Theta_i\bigr) + \varepsilon_i, \quad \text{with} \quad
\varepsilon_i \sim \mathcal N(\mathbf 0,\,\sigma_0^2 I_2),
\end{aligned}
\]
and the initial position of agents $i\in\{8001,\cdots,16000\}$ takes the value:
\[\begin{aligned}
\bX^i_0 &= (-0.5,-0.5) + \rho_{i-800}\bigl(\cos\Theta_{i-800},\sin\Theta_{i-800}\bigr) + \varepsilon_{i-800}.
\end{aligned}
\]
All initial velocities for the training samples are drawn from $\mathcal{N}(\mathbf{0},0.25\mathbf{I}_2)$. Figure \ref{fig:ring_x_gaussian_v} displays the evolution of the particle system with initial position generated from a ring distribution and initial velocity drawn from a Gaussian distribution, with a comparison between the true dynamics and the learned mean-field dynamics. Figure \ref{fig:2ring_x_gaussian_v} shows the dynamics of the system with initial position drawn from a double ring distribution and Gaussian initial velocity, again comparing the true versus learned dynamics. 

\begin{figure}[h!]
    \centering \subfigure[Upper row: true positions, lower row: learned positions]{\includegraphics[width=0.6\linewidth]{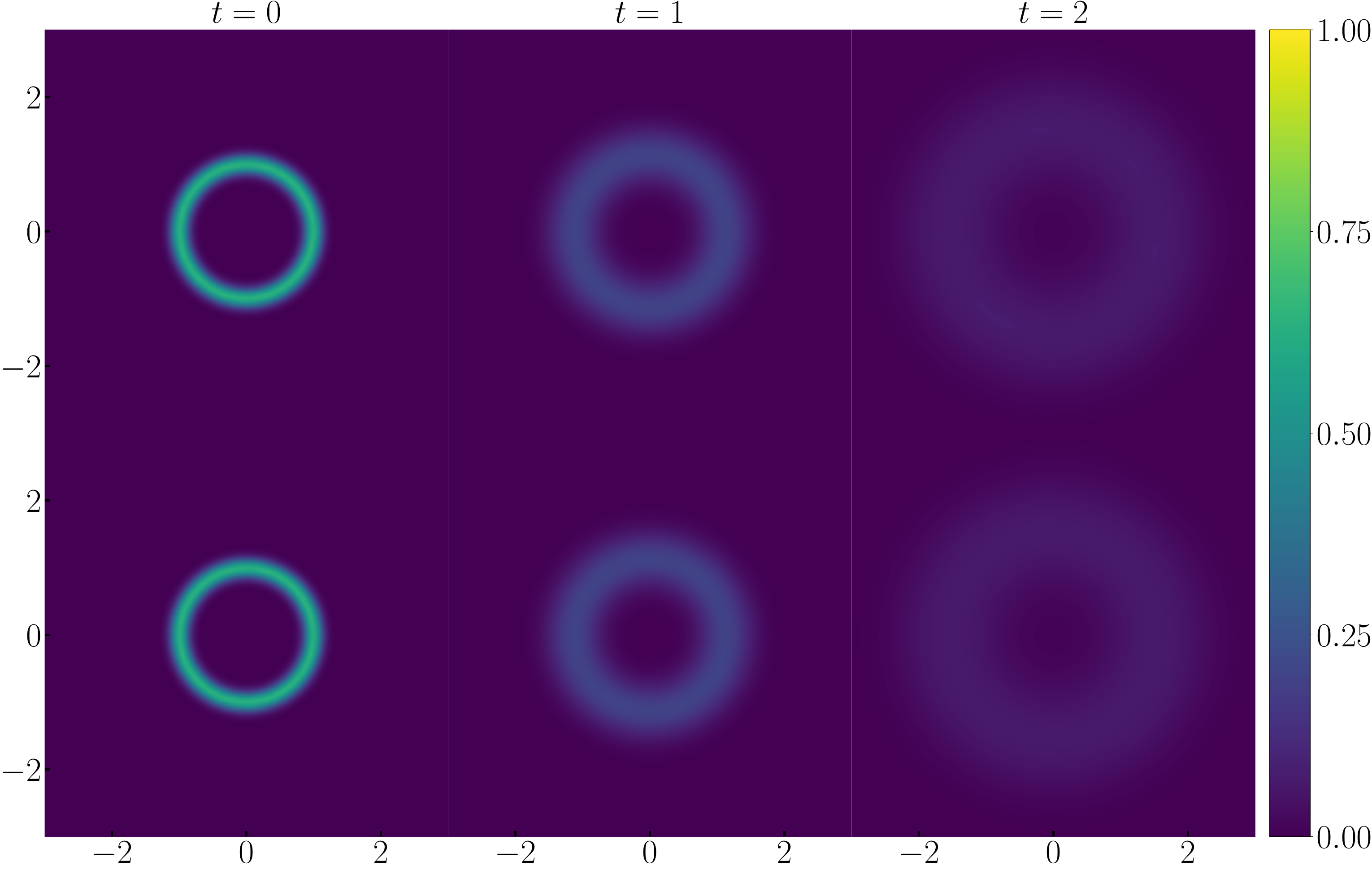}}\\
    \subfigure[Upper row: true velocities, lower row: learned velocities]{\includegraphics[width=0.6\linewidth]{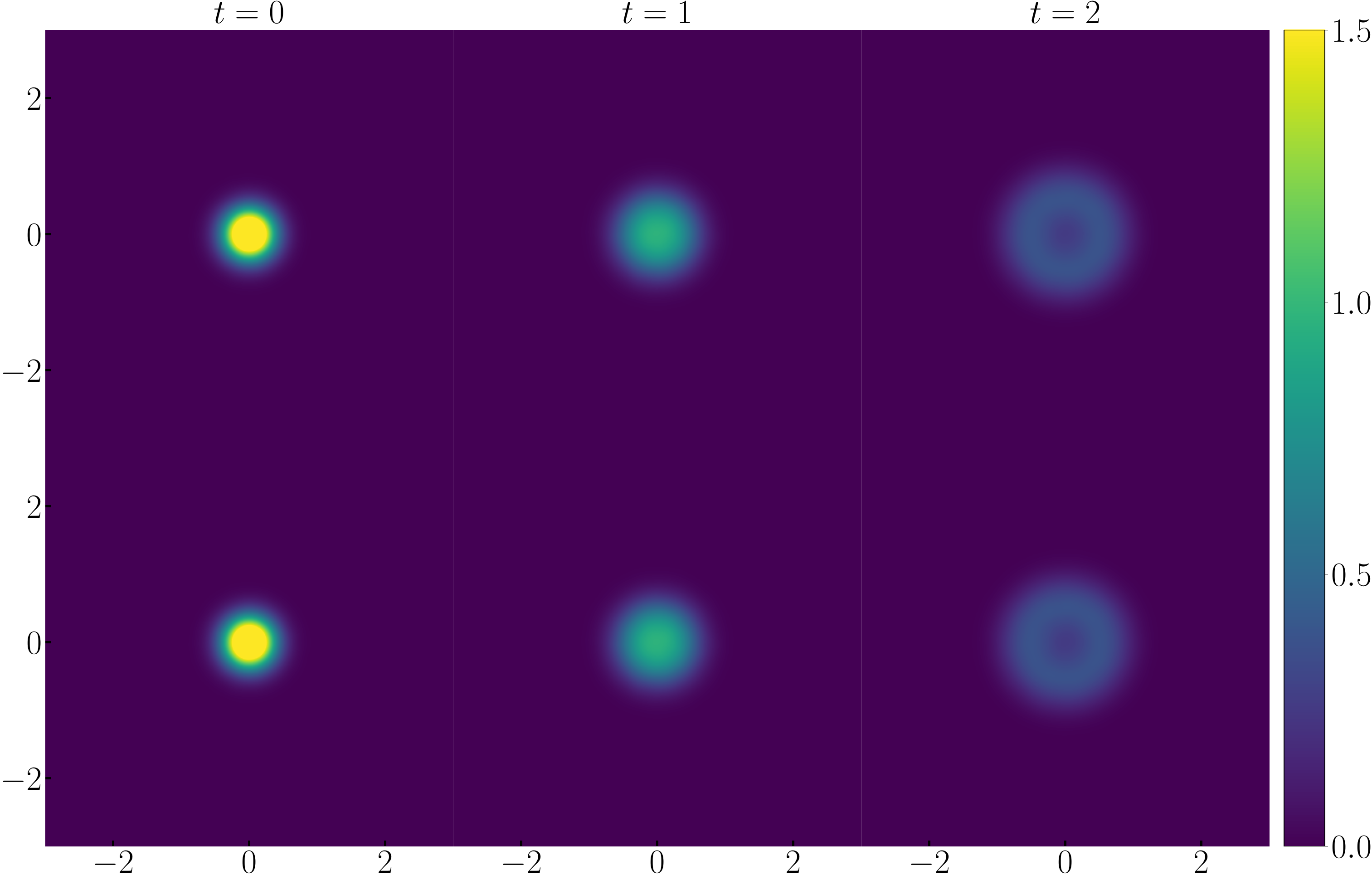}}
    \caption{\textbf{Second-order attraction-repulsion}:  Evolution of the second-order attraction-repulsion model initialized with ring-shaped initial position and Gaussian initial velocity. The upper rows display ground truth particle positions and velocities, while the lower rows show the positions and velocities predicted by the learned MVNN.}    \label{fig:ring_x_gaussian_v}
\end{figure}

\begin{figure}[h!]
    \centering \subfigure[Upper row: true positions, lower row: learned positions]{\includegraphics[width=0.6\linewidth]{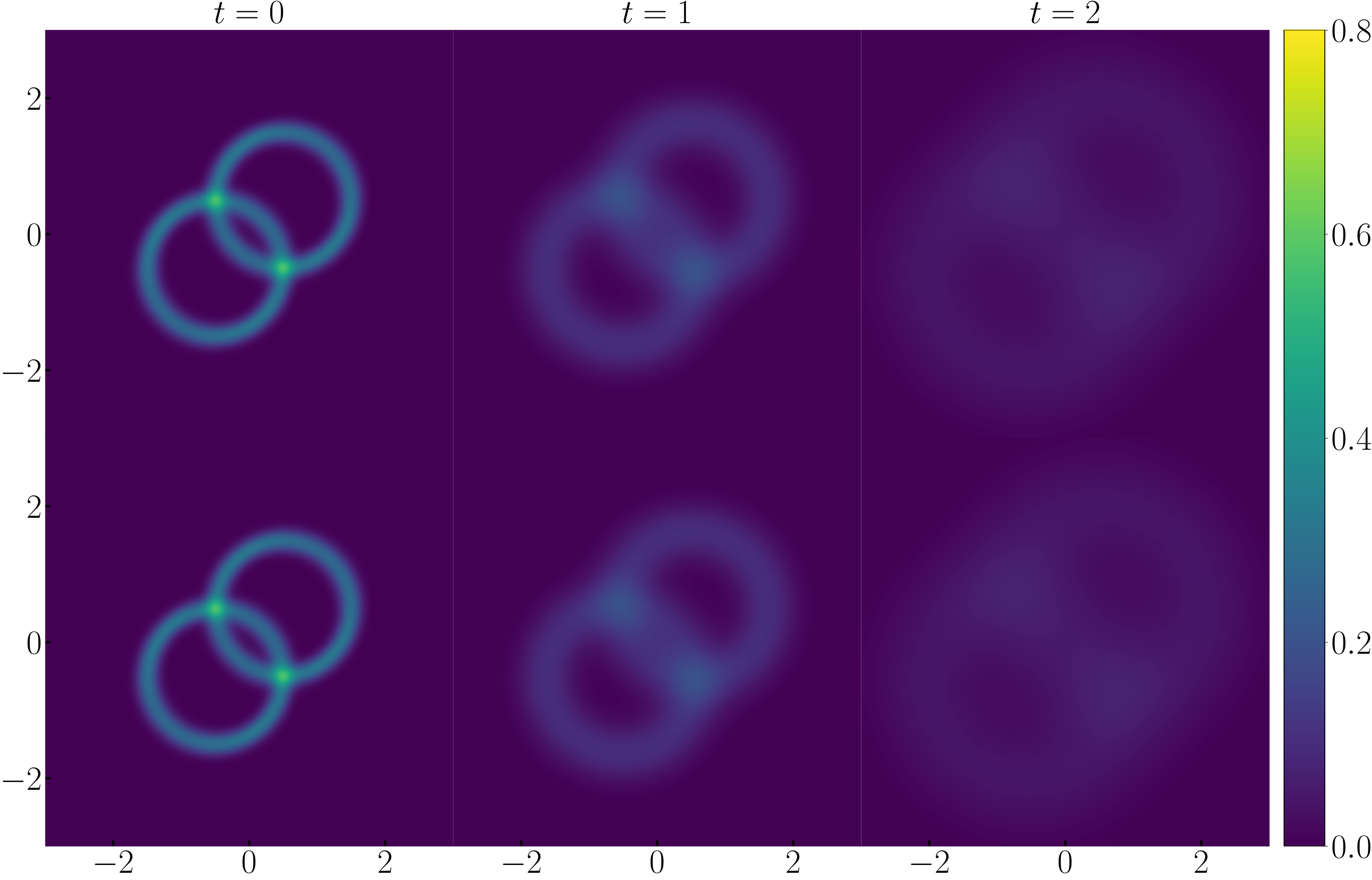}}\\
    \subfigure[Upper row: true velocities, lower row: learned velocities]{\includegraphics[width=0.6\linewidth]{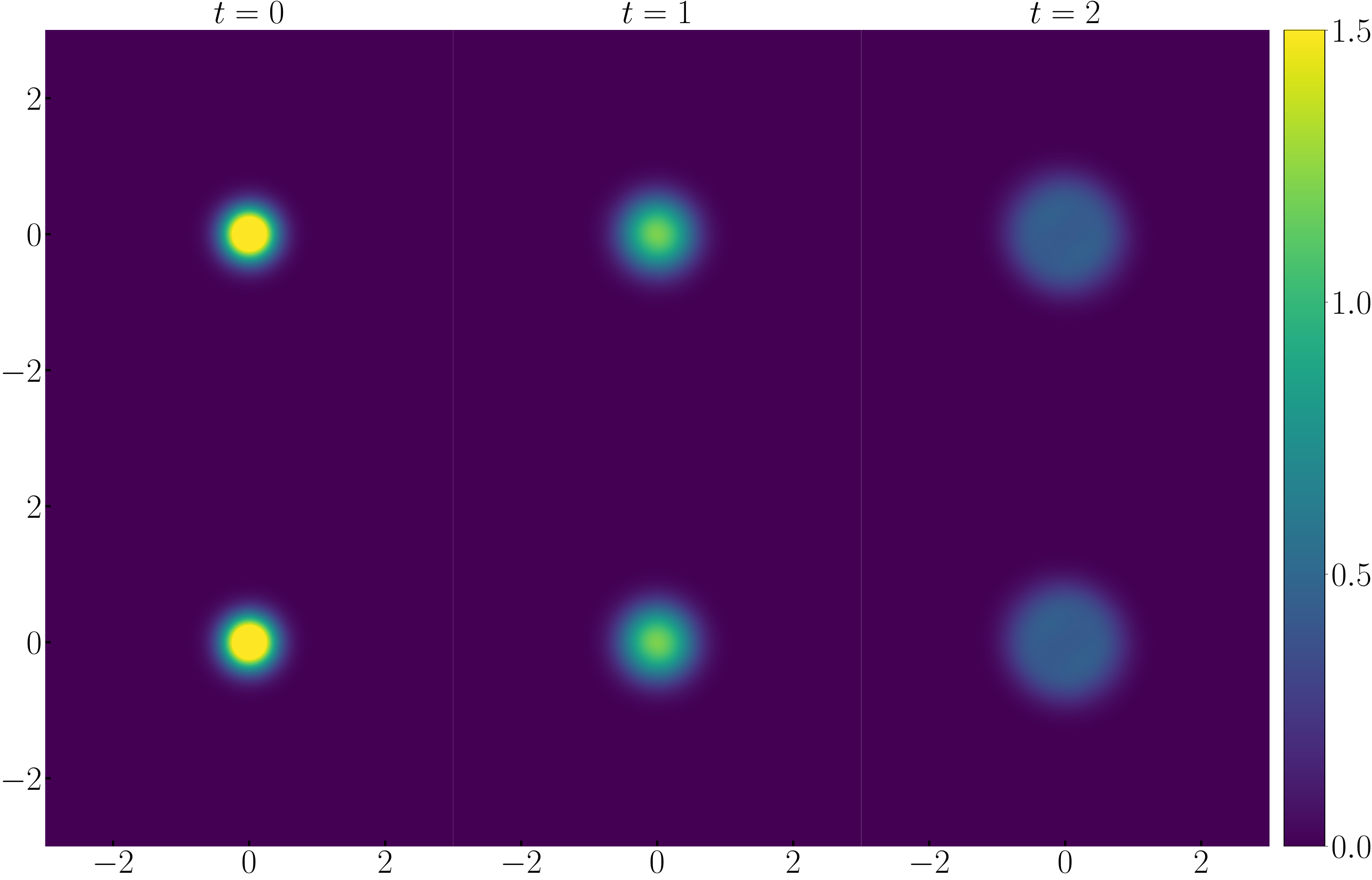}}
    \caption{\textbf{Second-order attraction-repulsion}: Evolution of the second-order attraction-repulsion model initialized with double ring-shaped initial position and Gaussian initial velocity. The upper rows display ground truth particle positions and velocities, while the lower rows show the positions and velocities predicted by the learned MVNN.}
    \label{fig:2ring_x_gaussian_v}
\end{figure}

We also evaluate the generalization capability of the learned MVNN by testing the model on initial position distributions that differ from those in the training set. We consider test cases with initial positions generated from a uniform disk and a binary distribution with heterogeneous density. We still sample the initial velocities from a Gaussian distribution. The true evolution of the particle system compared to the evolution predicted by the learned MVNN is shown in Figures \ref{fig:disk_x_gaussian_v} and \ref{fig:binary_x_gaussian_v}. 

\begin{figure}[h!]
    \centering \subfigure[Upper row: true positions, lower row: learned positions]{\includegraphics[width=0.6\linewidth]{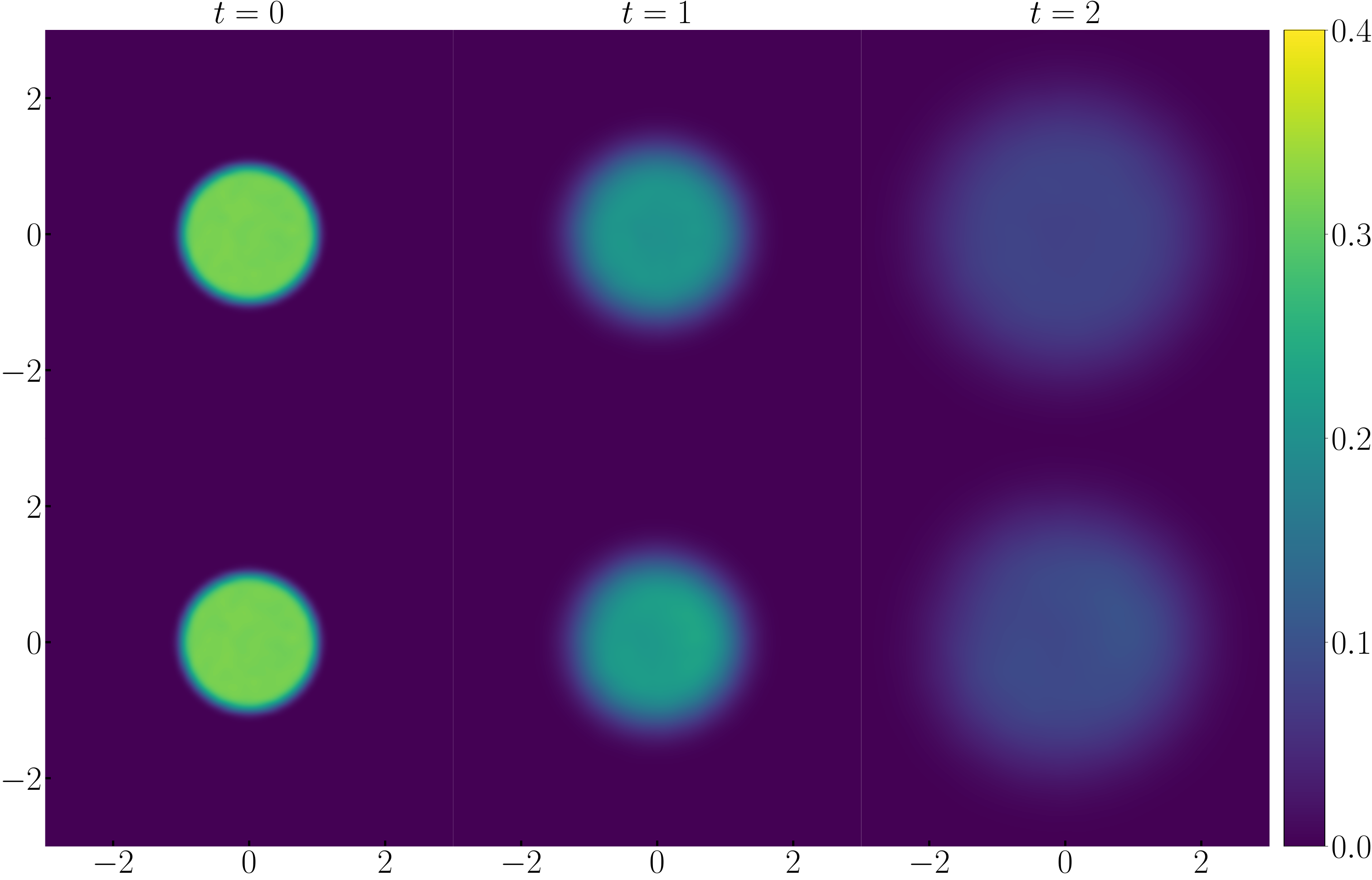}}\\
    \subfigure[Upper row: true velocities, lower row: learned velocities]{\includegraphics[width=0.6\linewidth]{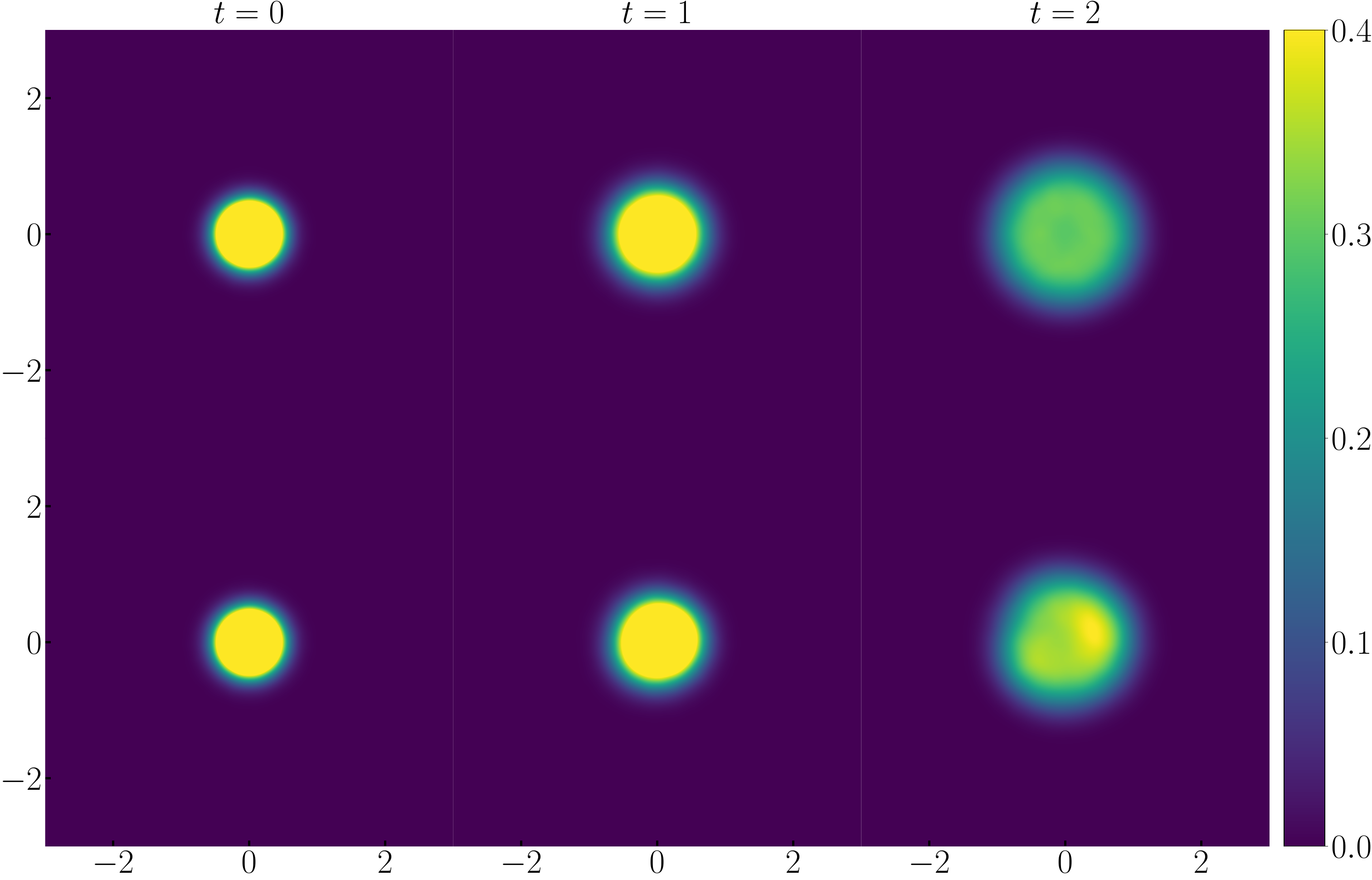}}
    \caption{\textbf{Second-order attraction-repulsion}: Evolution of the second-order attraction-repulsion model initialized with disk-shaped initial position and Gaussian initial velocity. The upper rows display ground truth particle positions and velocities, while the lower rows show the positions and velocities predicted by the learned MVNN.}
    \label{fig:disk_x_gaussian_v}
\end{figure}

\begin{figure}[h!]
    \centering \subfigure[Upper row: true positions, lower row: learned positions]{\includegraphics[width=0.66\linewidth]{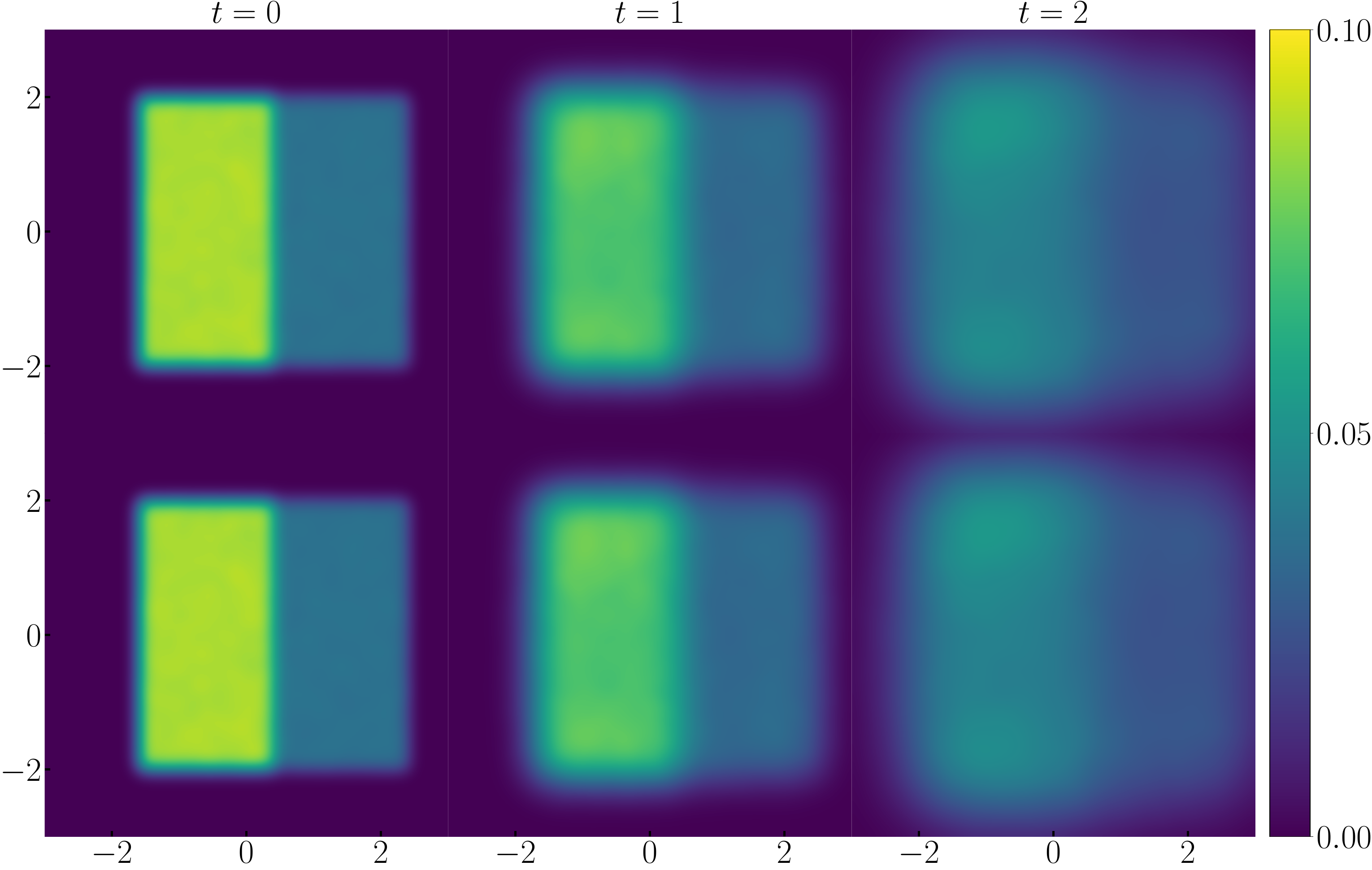}}\\
    \subfigure[Upper row: true velocities, lower row: learned velocities]{\includegraphics[width=0.66\linewidth]{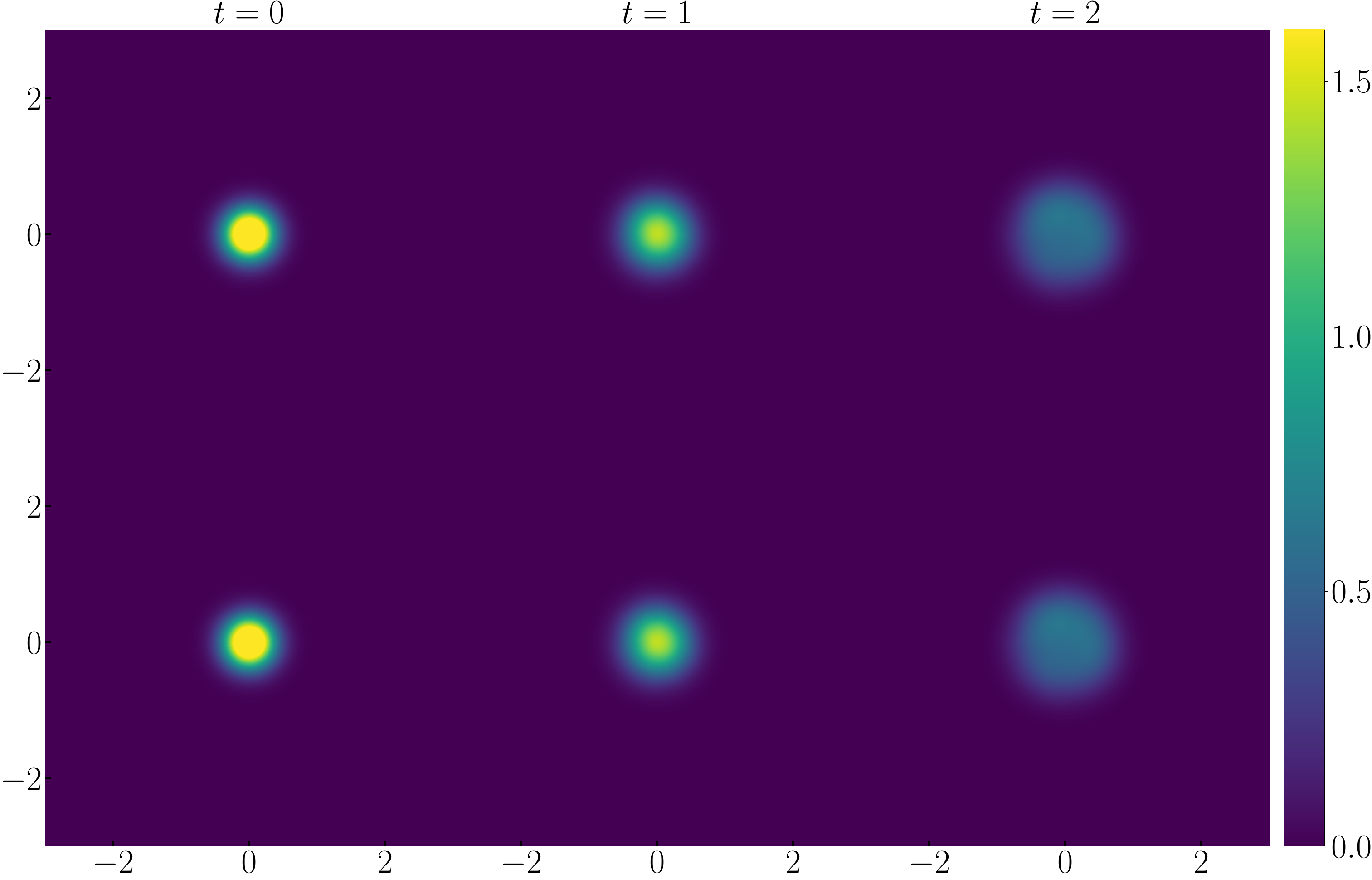}}
    \caption{\textbf{Second-order attraction-repulsion}: Evolution of the second-order attraction-repulsion model initialized with binary asymmetric initial position (low density left, high density right) and Gaussian initial velocity. The upper rows display ground truth particle positions and velocities, while the lower rows show the positions and velocities predicted by the learned MVNN.}
    \label{fig:binary_x_gaussian_v}
\end{figure}

\subsubsection{Second-Order Cucker Smale Model}
We also consider the two-dimensional second-order Cucker-Smale model, which involves velocity alignment rather than distance-based interactions: \[
\ddot{\bX}^i_t 
= \frac{1}{N}\sum_{j=1}^N 
\phi\left(\|\bX^j_t - \bX^i_t\|\right)(\dot{\bX}^j_t - \dot{\bX}^i_t),
\qquad i=1,\ldots,N,
\]
where $\bX^i_t \in \mathbb{R}^2$ denotes the position of agent $i$, and: \[
\phi(r) = c_\text{rep} \exp(-(r/\ell_\text{rep})^2) - c_\text{att}\exp(-(r/\ell_\text{att})^2) 
\] 
with $c_\text{rep} = 1.0 $, $\ell_\text{rep} = 0.5$, $c_\text{att} = 0.7$, and $\ell_\text{at}=2.0$. For our simulations, we use forward Euler to generate $100$ trajectories for $N=16{,}000$ particles with $200$ time steps and $\Delta t = 10^{-2}$. Our training set is formed by sampling $80\%$ of the initial positions from a  Gaussian distribution with random scaling and $20\%$ of the initial positions from a two-component Gaussian mixture that is also randomly scaled. Specifically, for trajectories $m\in\{1,\cdots,80\}$, the initial positions are generated from $\mathcal{N}(\mu,(s_m\sigma)^2\mathbf{I}_2)$, where $s_m \sim \mathcal{U}(s_\text{min},s_\text{max})$. 
For trajectories $m\in\{81,\cdots,100\}$, the initial position for half the agents is drawn from $\mathcal{N}(\mu_1,(s_{m,1}\sigma_1)^2\mathbf{I}_2)$ with $s_{m,1} \sim \mathcal{U}(s_\text{min},s_\text{max})$ and the initial position for the other half is sampled from $\mathcal{N}(\mu_2,(s_{m,2}\sigma_2)^2\mathbf{I}_2)$, where $s_{m,2} \sim \mathcal{U}(s_\text{min},s_\text{max})$. All initial training velocities are drawn from $\mathcal{N}(\mathbf{0},0.25\mathbf{I}_2)$. Figure \ref{fig:gaussian_x_gaussian_v} visualizes the dynamics of the particle system with Gaussian initial position and velocity, and figure \ref{fig:double_gaussian_x_gaussian_v} shows the evolution of the system with initial position generated from a two-component Gaussian mixture and initial velocity drawn from a Gaussian distribution . 

\begin{figure}[h!]
    \centering \subfigure[Upper row: true positions, lower row: learned positions]{\includegraphics[width=0.55\linewidth]{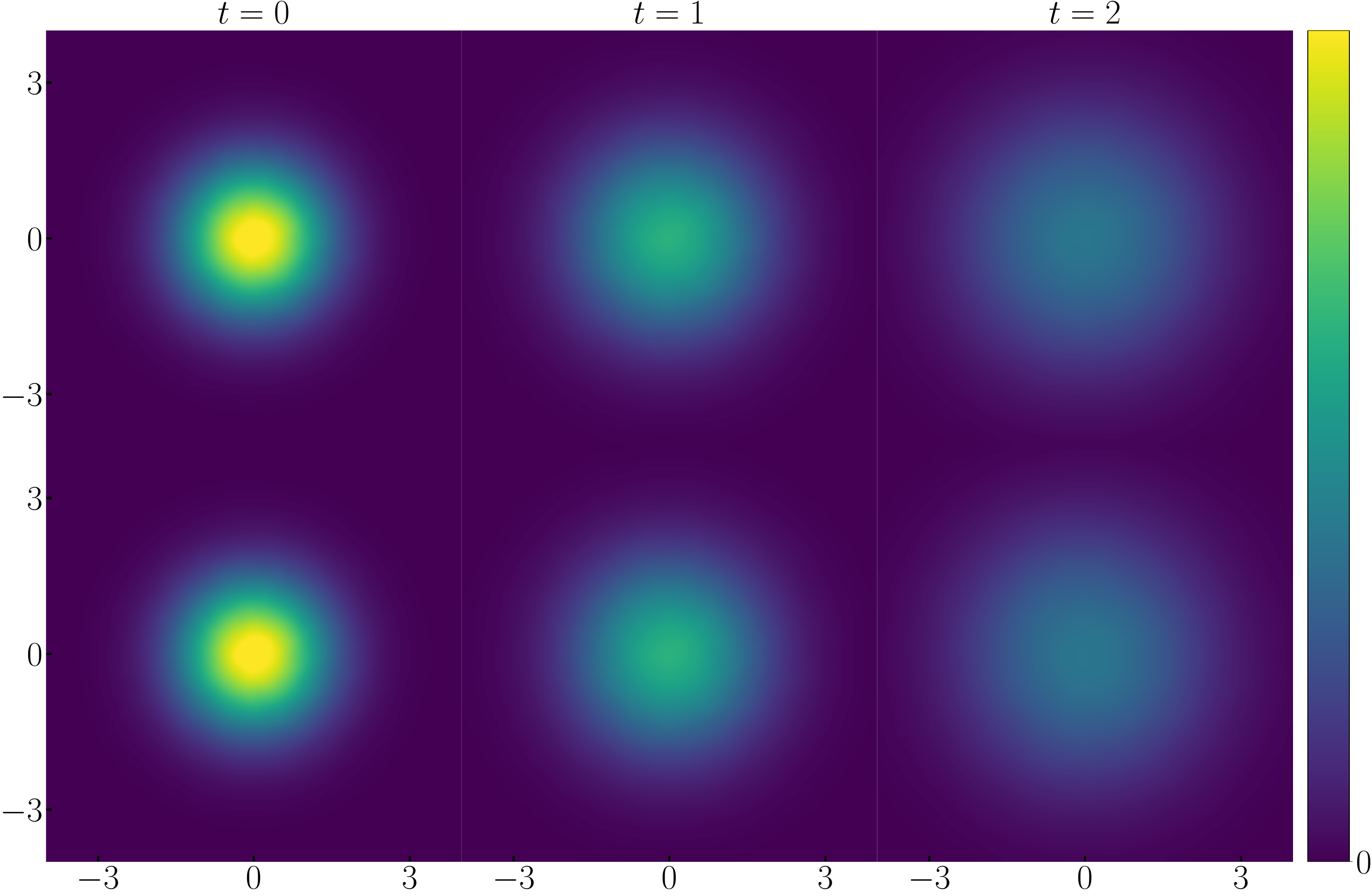}}\\
    \subfigure[Upper row: true positions, lower row: learned positions]{\includegraphics[width=0.55\linewidth]{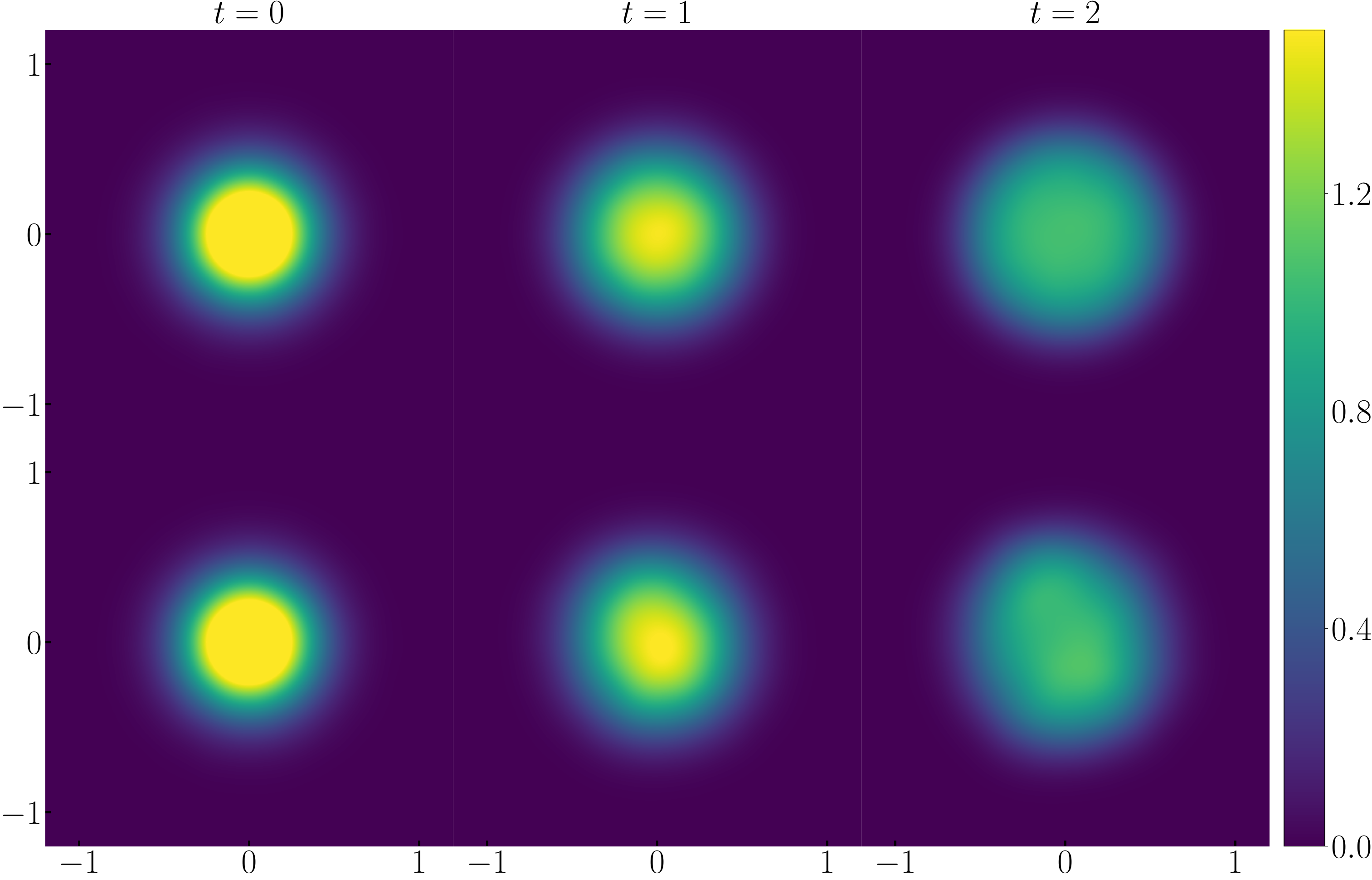}}
    \caption{\textbf{Second-order Cucker-Smale}: Evolution of the second-order Cucker-Smale model initialized with Gaussian initial position and Gaussian initial velocity. The upper rows display ground truth particle positions and velocities, while the lower rows show the positions and velocities predicted by the learned MVNN.}
    \label{fig:gaussian_x_gaussian_v}
\end{figure}

\begin{figure}[h!]
    \centering \subfigure[Upper row: true positions, lower row: learned positions]{\includegraphics[width=0.55\linewidth]{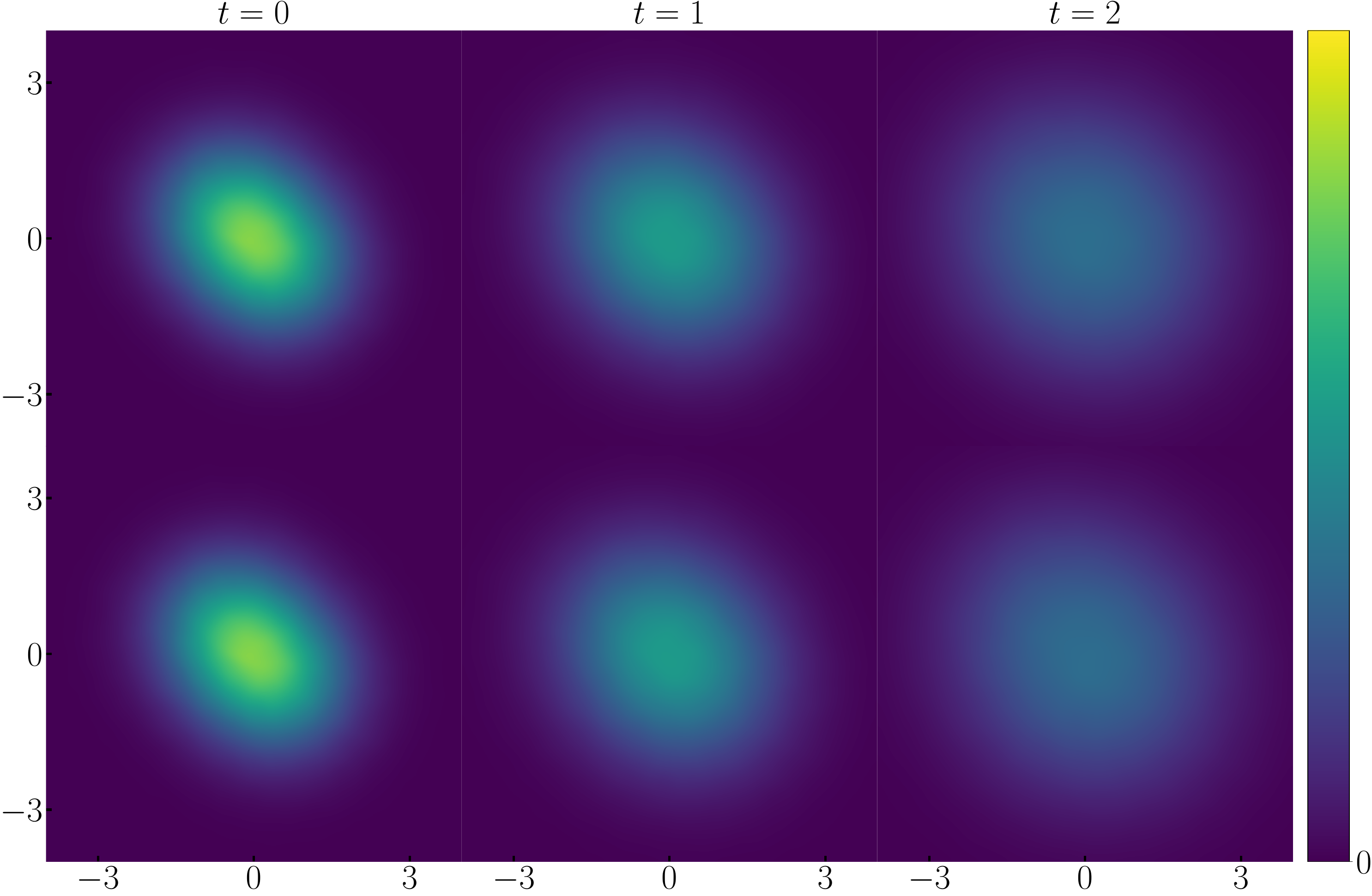}}\\
    \subfigure[Upper row: true velocities, lower row: learned velocities]{\includegraphics[width=0.55\linewidth]{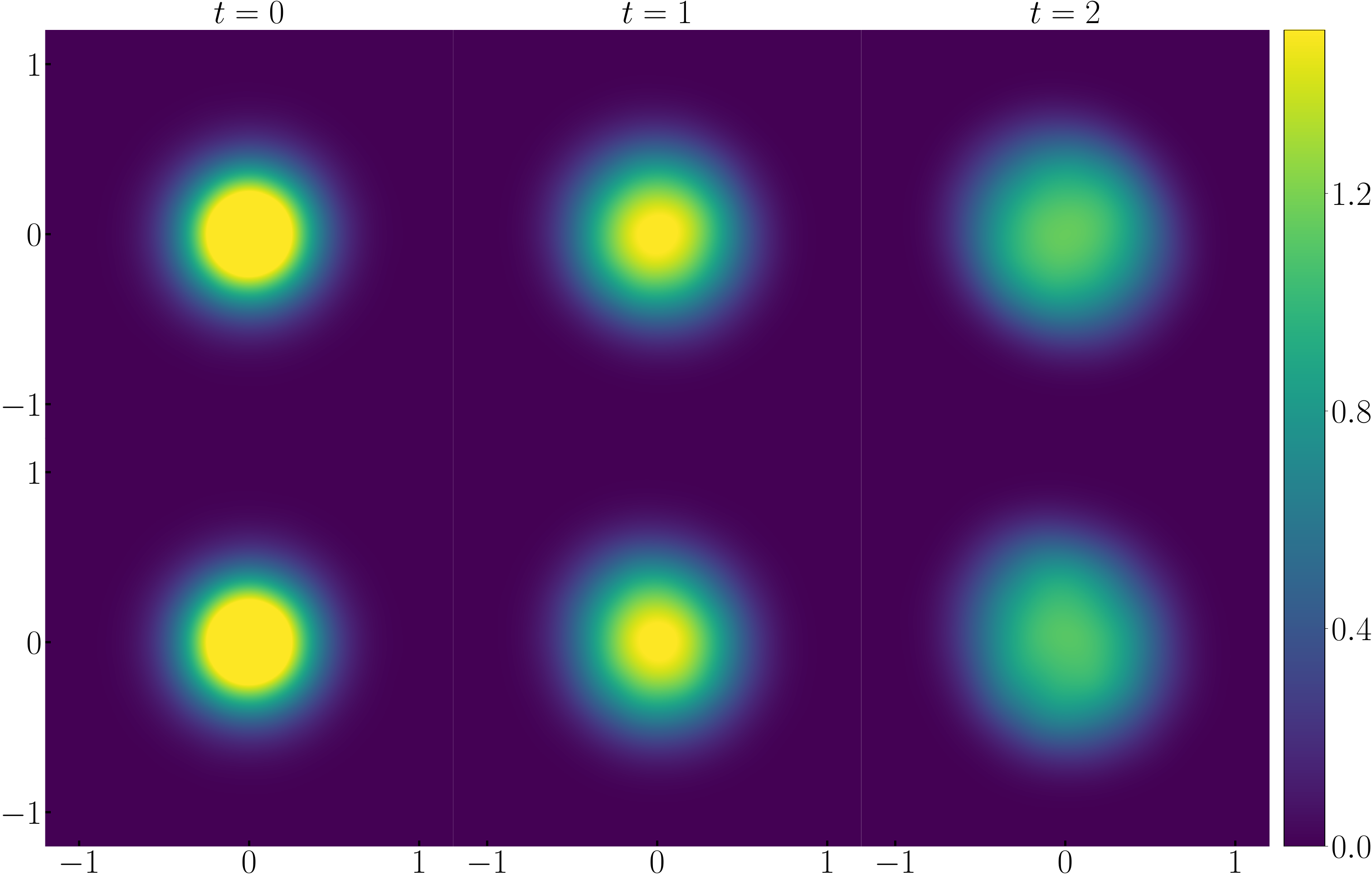}}
    \caption{\textbf{Second-order Cucker-Smale}: Evolution of  the second-order Cucker-Smale model initialized with a two-component Gaussian mixture initial position and Gaussian initial velocity. The upper rows display ground truth particle positions and velocities, while the lower rows show the positions and velocities predicted by the learned MVNN.}
    \label{fig:double_gaussian_x_gaussian_v}
\end{figure}

\section{Discussion}
We proposed a measure-valued neural network (MVNN) framework for estimating mean-field dynamics directly from particle-level observations of interacting agent systems. The approach generalizes neural network architectures to operate on probability measures, enabling the learning of measure-dependent drift terms that arise in the mean-field limit of large-scale interacting particle systems. 
Unlike mean-field approximations, our method provides a data-driven alternative that can efficiently infer complex mean-field interactions from empirical trajectory data without requiring explicit functional forms. We demonstrated through several representative examples, including Motsch-Tadmor dynamics, Cucker–Smale flocking, and hierarchical multi-group systems, that the proposed framework accurately captures the emergent collective behavior and generalizes well to unseen initial configurations. In particular, the multi-group extension of MVNN successfully recovers asymmetric and hierarchical communication structures among heterogeneous groups of agents. We also show that our framework can be extended to second-order systems. Comparisons with Gaussian process models demonstrate that our approach achieves greater efficiency and improved accuracy in predicting complex dynamics.

Our model can be viewed as a weak-form variant of operator learning \citep{schaeffer2017sparse, messenger2021weak}, conceptually related to but distinct from existing architectures such as DeepONet~\citep{lu2021learning} and the Fourier Neural Operator~\citep{kovachki2023neuraloperator}. DeepONet learns operators by mapping discrete pointwise evaluations of input functions to outputs, similar to finite difference schemes that rely on pointwise discretizations of differential operators. FNO, in contrast, represents input functions through their Fourier spectra, resembling spectral methods that approximate operators in a global frequency basis. Our proposed approach can be viewed as using the weak formulation to learn operators through integral constraints with test functions rather than pointwise values, akin to finite element methods in numerical analysis.

In future work, we note that the mean-field approximation is not always sufficient to capture the full range of interactions in complex systems. For instance, in plasma physics and dense particle systems, higher-order correlations play a significant role, and models such as the BBGKY hierarchy or kinetic closures are required to describe the coupled evolution of multi-particle distributions. Extending the proposed framework to learn higher-order interaction terms or reduced representations of two-particle distributions could therefore provide a promising pathway toward the data-driven discovery of beyond–mean-field dynamics. Such extensions would help bridge the gap between microscopic simulations and macroscopic kinetic models, potentially enabling efficient modeling of systems in which mean-field assumptions break down. This direction may also contribute to the development of foundation models for partial differential equations, capable of capturing multiscale structure and transferring across a broad class of dynamical systems.

\section*{Acknowledgments}
We would like to thank Adrien Weihs for the insightful discussions. This work used Anvil at Purdue through allocation MTH260007 from the Advanced Cyberinfrastructure Coordination Ecosystem: Services \& Support (ACCESS) program\cite{boerner2023access}, which is supported by U.S. National Science Foundation grants \#2138259, \#2138286, \#2138307, \#2137603, and \#2138296.
\bibliographystyle{unsrt}  
\bibliography{references}  

\appendix
\section{Proof of Proposition \ref{prop:well-possness} }
\begin{proof}
    The proof is based on Theorem 1.7 in \cite{carmona2016lectures}. For simplicity,
    we denote the embedding and interaction networks by $\varphi_{\mathrm{e}}$ and $\varphi_{\mathrm{i}}$, respectively.
    We only need to show that for all $\bx,\bx'\in \mathbb R^d$ and $\mu,\mu'\in \mathcal P$, we have:
    \[
    |\bb_\theta(\bx,\mu) - \bb_\theta (\bx',\mu')|\leq C(|\bx-\bx'|+W_2(\mu,\mu')),
    \]
    where $W_2$ denotes the Wasserstein-2 distance. Notice that, by the triangle inequality, we have: 
    \[
    |\bb_\theta(\bx,\mu) - \bb_\theta (\bx',\mu')| \leq \left|\bb_\theta(\bx,\mu) - \bb_\theta (\bx,\mu')\right| +|\bb_\theta(\bx,\mu') - \bb_\theta (\bx',\mu')|.
    \]
The first term is bounded by:  
    \[
    \begin{aligned}
       \left |\bb_\theta(\bx,\mu) - \bb_\theta (\bx,\mu')\right| = &\left|\varphi_{\mathrm{int}}\left(\bx,\int  \varphi_{\mathrm{e}}(\by ) \mu (\intd\by)\right) - \varphi_{\mathrm{int}}\left(\bx,\int  \varphi_{\mathrm{e}}(\by ) \mu' (\intd\by)\right)\right| \\
       \leq&  C_i\left| 
       \int  \varphi_{\mathrm{e}}(\by ) \mu (\intd\by) - \int  \varphi_{\mathrm{e}}(\by ) \mu' (\intd\by)
       \right|.
    \end{aligned}
    \]
    
For the second term, keeping the measure argument fixed at $\mu'$ and using the Lipschitz continuity of $\phi_{\mathrm{int}}$ in its first variable, we obtain:
    \[
    \begin{aligned}
    \left|\bb_\theta(\bx,\mu') - \bb_\theta (\bx',\mu')\right| =& \left|\varphi_{\mathrm{int}}\left(\bx,\int  \varphi_{\mathrm{e}}(\by )  \mu' (\intd\by)\right) - \varphi_{\mathrm{i}}\left(\bx',\int  \varphi_{\mathrm{e}}(\by ) \mu' (\intd\by)\right)\right| \\
    \leq & C_i\left|\bx-\bx'\right|.
    \end{aligned}
    \]
 
For the first term, we notice that for any coupling $\pi\in\Pi(\mu,\mu')$, whose marginals are $\mu$ and $\mu'$, i.e.
\[
\begin{aligned}
\int_{\mathbb{R}^d\times\mathbb{R}^d} \phi(x)\,\pi(\mathrm{d}x,\mathrm{d}y)
=\int_{\mathbb{R}^d} \phi(x)\,\mu(\mathrm{d}x),\\
\int_{\mathbb{R}^d\times\mathbb{R}^d} \psi(y)\,\pi(\mathrm{d}x,\mathrm{d}y)
=\int_{\mathbb{R}^d} \psi(y)\,\mu'(\mathrm{d}y).
\end{aligned}
\]
For all $\pi$, we have
\[
\begin{aligned}
    \left| 
       \int  \varphi_{\mathrm{e}}(\by ) \mu (\intd\by) - \int  \varphi_{\mathrm{e}}(\by ) \mu' (\intd\by)
       \right| =&
       \left|
        \int
        \varphi_{\mathrm{e}}(\by )-
        \varphi_{\mathrm{e}}(\by' )\pi(\intd \by,\intd \by')
       \right|\\
       \leq & 
       \left( \int\left(
        \varphi_{\mathrm{e}}(\by )-
        \varphi_{\mathrm{e}}(\by' )\right)^2\pi(\intd \by,\intd \by')\right)^{\frac{1}{2}}
       \\
       \leq & C_e \left(\int\left(
        \by -
        \by' \right)^2\pi(\intd \by,\intd \by')\right)^{\frac{1}{2}}.
\end{aligned}
\]
Taking the $\pi$ that minimize the $\int\left|
        \by -
        \by' \right|^2\pi(\intd \by,\intd \by')$, yields:
\[
\begin{aligned}
    \left |\bb_\theta(\bx,\mu) - \bb_\theta (\bx,\mu')\right| \leq C_i C_e W_2(\mu,\mu').
\end{aligned}
\]
Therefore, we obtain a Lipschitz bound on the drift:
  \[
    |\bb_\theta(\bx,\mu) - \bb_\theta (\bx',\mu')|\leq C(|\bx-\bx'|+W_2(\mu,\mu')).
    \]
\end{proof}
\section{Proof of Proposition \ref{prop:propogation_of_chaos}}
\begin{proof}
    The proof is based on a synchronous coupling argument \cite{sznitman2006topics}. We construct the $N$-particle system and a set of $N$ independent ``ideal'' mean-field processes driven by the same Brownian motions and show that their $L^2$ distance vanishes as $N \to \infty$. Let us define N independent processes $\bar{\mathcal{X}}^{\theta,N}_t = \left(\bar \bX^{\theta,1,N}_t,\cdots, \bar \bX^{\theta,N,N}_t\right)$ defined as the solution of $N$ SDEs:
    \[
    \begin{aligned}
     \intd \bar{\bX}^{\theta,i,N}_t  =& \bb_\theta(\bar \bX^{\theta,i,N}_t) \intd t + \sigma \intd \mathbf{B}^i_t\\
     = &\varphi_{\mathrm{int}}\left(\bar \bX^{\theta,i,N}_t,\int  \varphi_{\mathrm{e}}(\by ) f^\theta_t(\intd\by)\right)\intd t + \sigma \intd \mathbf{B}^i_t, \quad i\in \{1,\cdots,N\},
    \end{aligned}
    \]
    where $(\bB^i_t)$ is the same Brownian motion and where we call that $f^\theta_t= \mathrm{Law}(\bX_t^{\theta,i})$. Since the Brownian motions are independent, the law $f^\theta_t$ is independent of index $i$. We will show that:
    \begin{equation}\label{equ:bound}
       \frac{1}{N}\sum_{i=1}^N \mathbb E\left[\sup_{t\leq T}\left|\bX^{\theta,i,N}_t-\bar{\bX}^{\theta,i,N}_t\right|^2\right] \leq \epsilon (N,T). 
    \end{equation}   
Fix $i=1$ and define the path-space coupling
\[
\pi_N:=\mathrm{Law}\!\left( (\bX_t^{\theta,1,N})_{t\in[0,T]},\,(\bar \bX_t^{\theta,1,N})_{t\in[0,T]} \right).
\]
Its marginals are $f_{[0,T]}^{1,\theta,N}$ and $f_{[0,T]}^\theta$, respectively. Therefore, by the definition of the Wasserstein distance on path space,
\[
\begin{aligned}
W_2^2\!\left(f_{[0,T]}^{1,\theta,N},f_{[0,T]}^\theta\right)
&\le
\int \sup_{0\le t\le T}\left|\bx_t-\by_t\right|^2\,\pi_N(\intd \bx,\intd \by) \\
&=
\mathbb{E}\!\left[\sup_{0\le t\le T}\left|\bX_t^{\theta,1,N}-\bar \bX_t^{\theta,1,N}\right|^2\right].
\end{aligned}
\]
Since the right-hand side of Equation \eqref{equ:bound} tends to $0$ as $N\to\infty$, we conclude that
\[
W_2\!\left(f_{[0,T]}^{1,\theta,N},f_{[0,T]}^\theta\right)\to 0.
\]
Moreover, by exchangeability, the same estimate holds for any fixed finite collection of particles, which yields $f_{[0,T]}^\theta$-chaoticity of the learned particle system.
Using the Ito formula, and since the stochastic terms cancel due to the synchronous coupling:
\[
\left|\bX^{\theta,i,N}_t - \bar{\bX}^{\theta,i,N}_t\right|^2 = 
2 \int_0^t \left\langle \bX^{\theta,i,N}_s - \bar{\bX}^{\theta,i,N}_s, 
\bb_\theta(\bX^{\theta,i,N}_s, \mu_s^{\theta,N}) - \bb_\theta(\bar{\bX}^{\theta,i,N}_s, f^\theta_s) \right\rangle \intd s,
\]
where $e_s^i = \bX^{\theta,i,N}_s - \bar{\bX}^{\theta,i,N}_s$. Taking the supremum and then expectation:
\[
\begin{aligned}
& \mathbb E\left[\sup_{t\leq T}\left|\bX^{\theta,i,N}_t-\bar{\bX}^{\theta,i,N}_t\right|^2\right] \\
& = \mathbb E\left[\sup_{t\leq T} \left| 2 \int_0^t \left\langle \bX^{\theta,i,N}_s-\bar{\bX}^{\theta,i,N}_s, \bb_\theta(\bX^{\theta,i,N}_s, \mu_s^{\theta,N}) - \bb_\theta(\bar{\bX}^{\theta,i,N}_s, f^\theta_s) \right\rangle \intd s \right| \right] \\
& \leq 2 \int_0^T \mathbb E\left[ \left| \left\langle \bX^{\theta,i,N}_s-\bar{\bX}^{\theta,i,N}_s, \bb_\theta(\bX^{\theta,i,N}_s, \mu_s^{\theta,N}) - \bb_\theta(\bar{\bX}^{\theta,i,N}_s, f^\theta_s) \right\rangle \right| \right] \intd s \\
& \leq \int_0^T \mathbb E\left[ \left|\bX^{\theta,i,N}_s-\bar{\bX}^{\theta,i,N}_s\right|^2 \right] \intd s + \int_0^T \mathbb E\left[ \left| \bb_\theta(\bX^{\theta,i,N}_s, \mu_s^{\theta,N}) - \bb_\theta(\bar{\bX}^{\theta,i,N}_s, f^\theta_s) \right|^2 \right] \intd s \\
& \leq \int_0^T \mathbb E\left[\sup_{r\leq s}\left|\bX^{\theta,i,N}_r-\bar{\bX}^{\theta,i,N}_r\right|^2\right] \intd s + \int_0^T \mathbb E\left[ \left| \bb_\theta(\bX^{\theta,i,N}_s, \mu_s^{\theta,N}) - \bb_\theta(\bar{\bX}^{\theta,i,N}_s, f^\theta_s) \right|^2 \right] \intd s.
\end{aligned}
\]
The drift term can be split into two terms as follows:
\[
\begin{aligned}
     &\mathbb E \left[ \left|\bb_\theta(\bX^{\theta,i,N}_s,\mu^{\theta,N}_s)
     -\bb_\theta(\bar{\bX}^{\theta,i,N}_s,f^\theta_s)  \right|^2\right]\\
     &\quad \leq 2\mathbb E \left[ \left|\bb_\theta(\bX^{\theta,i,N}_s,\mu^{\theta,N}_s)-\bb_\theta(\bar{\bX}^{\theta,i,N}_s,\bar{\mu}^{\theta,N}_s)  \right|^2\right]+ 2\mathbb E \left[ \left|\bb_\theta(\bar{\bX}^{\theta,i,N}_s,\bar{\mu}^{\theta,N}_s)-\bb_\theta(\bar{\bX}^{\theta,i,N}_s,f^\theta_s)  \right|^2\right],
\end{aligned}
\]
where $\bar{\mu}^{\theta,N}_t = \frac{1}{N}\sum_{i=1}^N \delta_{\bar{\bX}^{\theta,i,N}_t}$.
For the first term, we have:
\[
\begin{aligned}
   \mathbb E \left[ \left|\bb_\theta(\bX^{\theta,i,N}_s,\mu^{\theta,N}_s)-\bb_\theta(\bar{\bX}^{\theta,i,N}_s,\bar{\mu}^{\theta,N}_s)  \right|^2\right] 
   \leq &2 C_i^2 \mathbb E \left[\left(\bX^{\theta,i,N}_s - \bar{\bX}^{\theta,i,N}_s \right)^2 \right]  + 2 C_i^2 C_e^2 \mathbb E\left[\left(W_2\left( \mu^{\theta,N}_s,\bar{\mu}^{\theta,N}_s\right) \right)^2\right]\\
   \leq & 2C_i^2(1+C_e^2)\mathbb E \left[|\bX^{\theta,i,N}_s - \bar{\bX}^{\theta,i,N}_s|^2 \right],
\end{aligned}
\]
and for the second term:
$$\begin{aligned}
    & \mathbb E \left[ \left|\bb_\theta(\bar{\bX}^{\theta,i,N}_s,\bar{\mu}^{\theta,N}_s)-\bb_\theta(\bar{\bX}^{\theta,i,N}_s,f^\theta_s)  \right|^2\right]\\
    \leq & (C_i C_e)^2 \mathbb E \left[ \left| \int \varphi_e(\bx) \bar{ \mu}^{\theta,N}_s(\intd\bx)  - \int \varphi_e(\bx)f^\theta_s(\bx)\intd \bx \right|^2 \right] \\
    \leq & (C_i C_e)^2 \mathbb E \left[\left|  \frac{1}{N} \sum_{j=1}^N \varphi_e (\bar{\bX}^{\theta,j,N}_s ) - \mathbb E[\varphi_e (\bar{\bX}^{\theta,j,N}_s )] \right|^2  \right].
\end{aligned}$$
Since the $\bar{\bX}^{\theta,j,N}_s$ are i.i.d. random variables with law $f_s$, the term inside the expectation is the squared error of a sample mean estimate. This is equal to the variance of the sample mean:
$$ \mathbb E \left[ \left|\bb_\theta(\bar{\bX}^{\theta,i,N}_s,\bar{\mu}^{\theta,N}_s)-\bb_\theta(\bar{\bX}^{\theta,i,N}_s,f^\theta_s)  \right|^2\right] \leq\frac{C_i^2(1+C_e^2)}{N} \text{Var}\left( \varphi_e(\bar{\bX}^{\theta,j,N}_s) \right)$$
Since $\varphi_e$ 
is Lipschitz and $f_s$ has finite second moments (guaranteed by linear growth and Lipschitz continuous of $\bb$), the variance is finite and bounded by some constant $C_V$. Therefore, the difference is bounded by:
$$ \mathbb E \left[ \left|\bb_\theta(\bar{\bX}^{\theta,i,N}_s,\bar{\mu}^{\theta,N}_s)-\bb_\theta(\bar{\bX}^{\theta,i,N}_s,f^\theta_s)  \right|^2\right] \leq \frac{(C_i C_e)^2 C_V}{N} .$$
Let $Y(t) = \mathbb E\left[\sup_{r\leq t}\left|\bX^{\theta,i,N}_r-\bar{\bX}^{\theta,i,N}_r\right|^2\right]$ and using the Itô inequality, we have:
$$ \begin{aligned} Y(T) &\leq \int_0^T Y(s) \intd s + \int_0^T \mathbb E\left[ \left| \bb_\theta(\bX^{\theta,i,N}_s, \mu_s^{\theta,N}) - \bb_\theta(\bar{\bX}^{\theta,i,N}_s, f^\theta_s) \right|^2 \right] \intd s \\
&\leq \int_0^T Y(s) \intd s + \int_0^T \left( 2(2 C_i^2 + 2 (C_i C_e)^2) \mathbb E \left[|\bX^{\theta,i,N}_s - \bar{\bX}^{\theta,i,N}_s|^2 \right] + 2 \frac{(C_i C_e)^2 C_V}{N} \right) \intd s \\
&\leq \frac{2 T C_B}{N} + \left(1 + 4 C_i^2 + 4 (C_i C_e)^2 \right) \int_0^T Y(s) \intd s.
\end{aligned}$$
Thus, the following holds:
\[
\begin{aligned}
    \mathbb E\left[\sup_{t\leq T}\left|\bX^{\theta,i,N}_t-\bar{\bX}^{\theta,i,N}_t\right|^2\right] \leq 2T\int_0^T C_i^2(1+C_e^2)  \mathbb E\left[\sup_{t\leq T}\left|\bX^{\theta,i,N}_s-\bar{\bX}^{\theta,i,N}_s\right|^2\right] + \frac{(C_i C_e)^2 C_V}{N}  
    \intd s.
\end{aligned}
\]
The conclusion follows by the Gronwall lemma.
\end{proof} 
\section{Proof of Theorem \ref{thm:scaleing_law}} 
First, we provide an approximation result for Lipschitz functions by deep neural network. It will be the same arguments as in \cite{liu2024neural,weihs2025deep}, but we need to include the dependence of Lipschitz constant explicitly here. 
We keep the dependence on the Lipschitz constant $L_h$ explicit because, in the proof of Theorem~\ref{thm:scaleing_law}, the theorem is applied to an auxiliary finite-dimensional map whose Lipschitz constant depends on $L_f$ and the covering complexity; this dependence must therefore be tracked in the final approximation-rate estimate.
\begin{theorem}\label{theorem:Lip_depend_scaleing}
    Let $d_1\in\mathbb N$, $\gamma_1>0$, and $\Omega_h=[-\gamma_1,\gamma_1]^{d_1}$.
Let $h:\Omega_h\to\mathbb R$ be $L_h$-Lipschitz, i.e.
\[
|h(x)-h(y)|\le L_h\|x-y\|_2,\qquad \forall x,y\in\Omega_h,
\]
and bounded, i.e., $\|h\|_{L^\infty(\Omega_h)}\le \beta_h$. We have that there exist constants $C$ dependent on $\gamma_1$ such that the following holds: for any $\epsilon>0$, set $N = C L_h \sqrt{d_1}\epsilon^{-1}$. Let $\{\bc_k\}_{k=1}^{N^{d_1}}$ be a uniform grid on $\Omega_h$ with spacing $\frac{2\gamma_1}{N-1}$ along each dimension. There exist neural networks architecture $\mathcal F_{NN}(d_1,1,L,p,K,\kappa,R)$  and networks $\{\tilde q_k\}_{k=1}^{N^{d_1}}$ with $\tilde q_k\in \mathcal F_{NN}(d_1,1,L,p,K,\kappa,R)$, for $k=1,\cdots,N^{d_1}$, such that for any $h$ satisfies the above assumption, we have 
    \begin{equation}
        \left\|h-\sum_{k=1}^{N^{d_1}} h(\bc_k) \tilde q_k\right\|_{L^\infty(\Omega_h)} \leq \epsilon,
    \end{equation}
where
\[
\begin{aligned}
L  & = O(d_1^2 \log(\epsilon^{-1}) + d_1^2 L_h + d_1 ^2 \log(d_1) )\\
p &= O(1)\\
K &= O(d_1^2 \log(\epsilon^{-1}) + d_1^2 L_h + d_1 ^2 \log(d_1) )\\
\kappa &=  O\left(d_1^{\frac{d_1}{2}+1}\epsilon^{-d_1-1}L_h^{d_1}\right)\\
R & = O(1).
\end{aligned}
\]
\end{theorem}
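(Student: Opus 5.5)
We build a ReLU partition of unity on a uniform grid and use the grid values of $h$ as coefficients, in the style of \cite{liu2024neural,weihs2025deep}, while keeping the dependence on $L_h$ explicit.

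\textbf{Step 1: tent partition of unity.} Take the uniform grid $\{\bc_k\}_{k=1}^{N^{d_1}}$ with spacing $\Delta=\frac{2\gamma_1}{N-1}$. Define one-dimensional hat functions $\rho_j(t)=\max\{0,1-|t-c_j|/\Delta\}$. Each hat is exactly a combination of three ReLUs, so it costs $O(1)$ width and depth. In one dimension these hats form a partition of unity on $[-\gamma_1,\gamma_1]$. Set $\phi_k(\bx)=\prod_{i=1}^{d_1}\rho_{k_i}(x_i)$. Then $\sum_k\phi_k\equiv 1$ on $\Omega_h$, each $\phi_k\ge0$, and $\mathrm{supp}\,\phi_k\subset \bc_k+[-\Delta,\Delta]^{d_1}$.

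\textbf{Step 2: exact-product error bound.} For $\bx\in\Omega_h$, only those $k$ with $\|\bx-\bc_k\|_2\le\sqrt{d_1}\Delta$ contribute. Hence
\[
\Bigl|h(\bx)-\sum_k h(\bc_k)\phi_k(\bx)\Bigr|\le\sum_k\phi_k(\bx)\,|h(\bx)-h(\bc_k)|\le L_h\sqrt{d_1}\,\Delta .
\]
Choosing $N=CL_h\sqrt{d_1}\epsilon^{-1}$ with $C$ proportional to $\gamma_1$ makes this at most $\epsilon/2$.

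\textbf{Step 3: approximate the product by a ReLU network.} Each $\phi_k$ is a product of $d_1$ numbers in $[0,1]$. Replace it by $\tilde q_k$, a ReLU approximation of the $d_1$-fold product. Use Yarotsky's squaring construction $xy=\frac12((x+y)^2-x^2-y^2)$ and arrange the multiplications in a binary tree of depth $\lceil\log_2 d_1\rceil$. Each multiplication to accuracy $\eta$ needs depth $O(\log\eta^{-1})$ and width $O(1)$. Since all inputs lie in $[0,1]$, errors accumulate at most linearly, giving a product error of $O(d_1\eta)$. Clip the output to $[0,1]$ with two ReLUs so that $R=O(1)$.

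\textbf{Step 4: choose $\eta$.} The approximated sum differs from the exact one by $\sum_k|h(\bc_k)|\,|\tilde q_k-\phi_k|$. This is the main obstacle. Summing crudely over all $N^{d_1}$ grid points would force $\eta\sim\epsilon N^{-d_1}/(\beta_h d_1)$, and this is exactly where the weight bound $\kappa=O(d_1^{d_1/2+1}\epsilon^{-d_1-1}L_h^{d_1})$ and the $L_h$-dependence enter.

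To control the depth, I would first try to exploit locality. Arrange that $\tilde q_k$ vanishes wherever any $\rho_{k_i}$ vanishes; this holds for Yarotsky's multiplier, which is exact when one factor is $0$. Then at most $2^{d_1}$ terms are nonzero at any $\bx$, so $\eta\sim\epsilon\,2^{-d_1}/(d_1\beta_h)$ suffices. If that fails, fall back to the crude bound. Then $\log\eta^{-1}=O(\log\epsilon^{-1}+d_1\log N)=O(\log\epsilon^{-1}+d_1\log(L_h d_1))$. Multiplying by the $O(d_1)$ multiplications needed to realize the product serially at width $O(1)$ gives
\[
L,K=O\bigl(d_1^2\log\epsilon^{-1}+d_1^2L_h+d_1^2\log d_1\bigr),
\]
where the $L_h$ term is a generous upper bound for $\log L_h$.

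\textbf{Step 5: weights and grid encoding.} The input scaling $1/\Delta\sim N$ sits in the first layer of the hat functions; the crude $\eta$ choice is what inflates the stated $\kappa$. The grid points $\bc_k$ are hard-coded in the biases. The coefficients $h(\bc_k)$ are kept outside the networks as the linear output weights, which matches the form $\sum_k h(\bc_k)\tilde q_k$ in the statement.

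Combining Steps 2 and 4 bounds the total error by $\epsilon/2+\epsilon/2=\epsilon$.
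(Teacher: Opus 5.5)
Your proposal is correct and is essentially the paper's proof. The paper also builds a product-form piecewise-linear partition of unity on the uniform grid, using a flat-topped trapezoidal $\psi$ where you use hats, and approximates each product by serially nesting a ReLU multiplier, $\tilde q_k=\tilde f(\psi(\cdot),\tilde f(\psi(\cdot),\cdots))$, with error $d_1\delta$; it then takes exactly your crude fallback $\delta=\epsilon/(2d_1N^{d_1}\beta_h)$ over all $N^{d_1}$ terms, which yields the stated $L$, $K$, $\kappa$. Your locality refinement (the multiplier is exact at zero, so at most $2^{d_1}$ terms are active) is valid and would only sharpen the depth; also, your depth count in Step 4 corresponds to this serial nesting, not to the binary tree you describe in Step 3.
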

\begin{proof}
    We partition $\Omega_h$ into $N^{d_1}$ subcubes for some $N$ to be specific later. We are going to approximate $h$ on each cube by a constnat function and then assemble them together to get an approximation of $h$ on $\Omega_h$. Denote the centers of the subcubes by $\{\bc_k\}_{k=1}^{N^{d_1}}$ with $\bc_k = [c_{k,1},\cdots,c_{k,d_1}]^\top$. Let $\{\bc_k\}_{k=1}^{N^{d_1}}$ be a unifom grid on $\Omega_h$ so that each $\bc_k \in \left\{ -\gamma_1,-\gamma_1+ \frac{2\gamma_1}{N-1},\cdots, \gamma_1 \right\}^{d_1}$ for each $k$. Define 
    \begin{equation}
        \psi(a) = \left\{
    \begin{aligned}
    &1,&|a|\leq 1,\\
    &0, &|a|>2,\\
    &2-|a|,& 1\leq |a| \leq 2, 
    \end{aligned}
    \right.
    \end{equation}
with $a\in \mathbb R$, and 
\[
\phi_{\bc_k} (\bx) = \prod_{j=1}^{d_1} \psi\left(\frac{3(N-1)}{2\gamma_1}(x_j-c_{k,j})\right).
\]
For any $h$, we construct a piecewise constant approximation of $h$ as 
\[
\bar h (\bx)  = \sum_{k=1}^{N^{d_1}} h(\bc_k)\phi_{\bc_k} (\bx).
\]
By utilizing the partition of unity property given by $ \sum_{k=1}^{N^{d_1}}\phi_{\bc_k} (\bx)=1$, it follows that for any $\bx\in \Omega_h$, we have 
\begin{equation}
\begin{aligned}
    |h(\bx)-\bar h (\bx)| =& \left|\sum_{k=1}^{N^{d_1}} \phi_{\bc_k}(\bx) \left(h(\bx)-h(\bc_k)\right) \right|\\
    \leq & \sum_{k=1}^{N^{d_1}}  \phi_{\bc_k}(\bx) \left| h(\bx)-h(\bc_k)\right| \\
    \leq & \sum_{k:\|\bc_k-\bx\|_\infty \leq \frac{2\gamma_1}{N-1}}  \phi_{\bc_k}(\bx) \left| h(\bx)-h(\bc_k)\right| \\
    \leq & \max_{k:\|\bc_k-\bx\|_\infty \leq \frac{2\gamma_1}{N-1}}  \left| h(\bx)-h(\bc_k)\right|\left(\sum_{k:\|\bc_k-\bx\|_\infty \leq \frac{2\gamma_1}{N-1}}  \phi_{\bc_k}(\bx)\right)\\
     \leq & \max_{k:\|\bc_k-\bx\|_\infty \leq \frac{2\gamma_1}{N-1}}  \left| h(\bx)-h(\bc_k)\right|
     \leq \frac{2\sqrt{d_1}\gamma_1L_h}{N-1},
\end{aligned}
\end{equation}
Setting $N= \lceil \frac{4 \sqrt{d_1} \gamma_1 L_h}{\epsilon} \rceil+1$ yields:
\[
|h(\bx) - \bar h(\bx)|\leq \frac{\epsilon}{2} ,\quad \forall x \in \Omega_h.
\]
Then we show that $\phi_{\bc_k}$ can be approximated by a network with arbitrary accuracy. Note $\phi_{\bc_k}$ is a product of $d_1$ functions, and each of them can be a piecewise linear function and can be realized by $4\mhyphen$layer ReLU networks. 
\begin{lemma}\label{lemma:relunn}
    Given $M>0$ and $\epsilon>0$, there is a ReLU network $\tilde f :\mathbb R^d \to \mathbb R $ in $\mathcal F_{NN}(2,1,L,p,K,\kappa, R)$ such that for any $|x|\leq M $ and $|y|\leq M$, we have  
    \[
    \left| \tilde f (x,y) - f(x,y)\right| <\epsilon, 
    \]
    where $L=O(\log \epsilon^{-1})$, $p = 6$, $K=O(\log \epsilon ^{-1})$, $\kappa = O(\epsilon^{-1})$, $R=M^2$. The constant hidden in $O$ depends on M.
\end{lemma}
Let $\tilde f$ be the network defined in Lemma \ref{lemma:relunn} with accuracy $\delta$. We approximate $\phi_{\bc_k}$ by $\tilde q_k$ defined as: 
\[
\tilde q_k(\bx) = \tilde f\left(\psi \left(\frac{2(N-1)}{2\gamma_1}(x_1-c_{k,1})\right), \tilde f\left(\psi \left(\frac{2(N-1)}{2\gamma_1}(x_2-c_{k,2})\right),\cdots\right)\right).
\]
For each $k,\tilde q_k \in \mathcal F_{NN} (d_1,1,L,p,K,\kappa,R)$ with 
\[
L = O(d_1\log \delta^{-1}), p = O(1), K = O(d_1\log\delta^{-1}),\kappa=O(\delta^{-1}+N) , R= 1.
\]
For any $\bx\in \Omega_h$, we have that 
\[
\begin{aligned}
    &|\tilde q _k(\bx)-\phi_{\bc_k} (\bx)| \\
\leq & \left|\tilde f\left(\psi \left(\frac{2(N-1)}{2\gamma_1}(x_1-c_{k,1})\right), \tilde f\left(\psi \left(\frac{2(N-1)}{2\gamma_1}(x_2-c_{k,2})\right),\cdots\right)\right)    -\phi_{\bc_k} (\bx)\right|\\  
\leq & \left|\tilde f\left(\psi \left(\frac{2(N-1)}{2\gamma_1}(x_1-c_{k,1})\right), \tilde f\left(\psi \left(\frac{2(N-1)}{2\gamma_1}(x_2-c_{k,2})\right),\cdots\right)\right)\right.  \\
& - \left. \psi \left(\frac{2(N-1)}{2\gamma_1}(x_1-c_{k,1})\right) \tilde f\left(\psi \left(\frac{2(N-1)}{2\gamma_1}(x_2-c_{k,2})\right),\cdots\right) \right|\\
& + \left|\psi \left(\frac{2(N-1)}{2\gamma_1}(x_1-c_{k,1})\right)\tilde f\left(\psi \left(\frac{2(N-1)}{2\gamma_1}(x_2-c_{k,2})\right),\cdots\right)
-\phi_{\bc_k} (\bx)\right|\\
\leq & \delta + \mathcal E_2
\end{aligned}
\]
where
\[
\begin{aligned}
\mathcal E_2 = & \left|\psi \left(\frac{2(N-1)}{2\gamma_1}(x_1-c_{k,1})\right)\tilde f\left(\psi \left(\frac{2(N-1)}{2\gamma_1}(x_2-c_{k,2})\right),\cdots\right)
-\phi_{\bc_k} (\bx)\right|\\
= &  \left|\psi \left(\frac{2(N-1)}{2\gamma_1}(x_1-c_{k,1})\right)\right|\left|\tilde f\left(\psi \left(\frac{2(N-1)}{2\gamma_1}(x_2-c_{k,2})\right),\cdots\right)
-\prod_{j=2}^{d_1} \psi \left(\frac{2(N-1)}{2\gamma_1}(x_j-c_{k,j})\right)\right|.
\end{aligned}
\]
Repeat this process to estimate $\mathcal E_2,\mathcal E_3,\cdots, \mathcal E_{d+1}$, where $\mathcal E_{d_1+1}  = 0$. This implies that $\|\phi_{\bc_k}-\tilde q_k\|_{L^{\infty}(\Omega_h)}\leq d_1\delta$. It follows that,
\[
\begin{aligned}
    \left\|\sum_{k=1}^{N^{d_1}} h(\bc_k)\tilde q_k -\bar h\right\|_{L^\infty(\Omega_h)}
    = 
    \left\|\sum_{k=1}^{N^{d_1}} h(\bc_k)\tilde q_k -\sum_{k=1}^{N^{d_1}} h(\bc_k)\tilde \phi_{\bc_k}\right\|_{L^\infty(\Omega_h)}
    \leq \sum_{k=1}^{N^{d_1}} |h(\bc_k)|\left\|\tilde q_k  - \phi_{\bc_k}\right\|_{L^\infty(\Omega_h)}
    \leq d_1 N^{d_1}\beta_h \delta
\end{aligned}
\]
and setting $\delta=  \frac{\epsilon}{2d_1N^{d_1}\beta_h}$ yields:
\[
\begin{aligned}
    \left\| h - \sum_{k=1}^{N^{d_1}} h(\bc_k)\tilde q_k  \right\|_{L^\infty(\Omega_h)} \leq \epsilon
\end{aligned}
\]
Therefore 
\[
\begin{aligned}
L  &= O\left(d_1\log\left(\frac{2d_1N^{d_1}\beta_h}{\epsilon}\right)\right) =  O\left(d_1\log(\epsilon^{-1}) + d_1^2 \log(N) \right) = O(d_1^2 \log(\epsilon^{-1}) + d_1^2 L_h + d_1 ^2 \log(d_1) )\\
p &= O(1)\\
K &= O(d_1^2 \log(\epsilon^{-1}) + d_1^2 L_h + d_1 ^2 \log(d_1) )\\
\kappa &= O\left(\frac{d_1\left( \frac{\sqrt{d_1}  L_h}{\epsilon}\right)^{d_1}}{\epsilon}  \right) =  O\left(d_1^{\frac{d_1}{2}+1}\epsilon^{-d_1-1}L_h^{d_1}\right)\\
R & = O(1).
\end{aligned}
\]
\end{proof}

\begin{proof}[Proof of Theorem 2.4.]
    By Lemma 2 in \cite{liu2024neural}, there exist a cover \(\{\mathcal B_\delta (\mathbf c_m)\}_{m=1}^{C_\Omega}\) of \(\Omega\) by $C_\Omega$ Euclidean balls with $C_\Omega\leq C\delta^{-d}$.  There exists a partition of unity $\{\omega_m(\bx)\}_{m=1}^{C_\Omega}$ subordinate to the cover \(\{\mathcal B_\delta (\mathbf c_m)\}_{m=1}^{C_\Omega}\) such that $0\leq \omega_m\in C^\infty (\Omega)$ and $\sum_{m=1}^{C_\Omega}\omega(\bx)=1$ for all $\bx\in \Omega$.
    For any $\mu\in U$, we define 
   \(\bu=\left(\left\langle \omega_1,\mu\right\rangle, \cdots , \left\langle \omega_{C_\Omega},\mu\right\rangle\right)
    \). We note that: 
    \[\sum_{m=1}^{C_\Omega} \bu_m = \int_\Omega \sum_{m=1}^{C_\Omega} \omega_m(\bx)\mu(\intd \bx)=1.\]
    and $\bu_m\geq0$ for all $m$.
    Therefore, we construct the approximate measure: 
    \[
    \mu_\omega = \sum_{m=1}^{C_\Omega} \bu_m \delta_{\bc_m}(\intd \bx),
    \]
    The $W_2$ error estimation of the approximation is given by: 
    \[
    \begin{aligned}
    W^2_2(\mu,\mu_\omega ) =& \inf_{\pi\in \Pi(\mu,\mu_\Omega)}\int_{\Omega\times\Omega} |\bx-\by|^2 \intd \pi(\bx,\by)\\
        \leq & \int_{\Omega\times\Omega} |\bx-\by|^2 \intd \pi^*(\bx,\by) \quad \left( \intd \pi^*(\bx,\by) = \left(\sum_{m=1}^{C_\Omega}\omega_m(\bx)\delta_{\bc_m}(\intd y )\right)\intd \mu(\bx)  \right)\\
        = & \sum_{m=1}^{C_\Omega}\int_{\Omega\times\Omega} |\bx-\bc_m|^2 \omega_m(\bx)\intd \mu(\bx)\\
        = & \sum_{m:\|\bx-\bc_m\|_2\leq \delta}\int_{\Omega\times\Omega} |\bx-\bc_m|^2 \omega_m(\bx)\intd \mu(\bx)\\
        \leq & \sum_{m:\|\bx-\bc_m\|_2\leq \delta} \delta ^2 \bu_m  \leq \delta ^2
    \end{aligned}
    \]
    Setting $\delta = \frac{\epsilon}{2L_f}$ and using the Lipschitz property of $f$, we have:
    \[
    \begin{aligned}
    |f(\bx , \mu)- f(\bx,\mu_\omega) |\leq & L_f  W_1(\mu,\mu_\omega)\\        
    \leq& L_f  W_2(\mu,\mu_\omega) \\
    \leq& \frac{\epsilon}{2}.
    \end{aligned}
    \]
    We also have that $C_\Omega\leq  C \epsilon^{-\epsilon}$.
    We next define a function $g:\mathbb R^d\times \mathbb R^{C_\Omega}\to \mathbb R$ such that $g(\bx,\bu) = f(\bx,\mu_\omega)$. We claim that $g$ is Lipschitz in the following sense: for any $\bx,\by\in \mathbb R^d$, and for any $\mu,\nu\in \mathcal P_2(\Omega)$, define $\mu_\omega$ and $\nu_\omega$ as before with coefficients $\bu$ and $\bv$ respectively. We have:
    \[
    \begin{aligned}
        |g(\bx,\bu)-g(\by,\bv)| =&  |f(\bx,\mu_\omega)-f(\by,\nu_\omega)|\\
        \leq & L_f\left(|\bx-\by|_2 + W_1(\mu_\omega,\nu_\omega )  \right)
     \end{aligned}
    \]
    Notice that 
    \[
    \begin{aligned}
      W_1(\mu_\omega, \nu_\omega) =& \sup_{\mathrm{Lip}(t) \le 1} \left\{ \int_{\Omega} t(x) d\mu_\omega(x) - \int_{\Omega} t(x) d\nu_\omega(x) \right\} \\
    = & \sup_{\mathrm{Lip}(t) \le 1} \left\{\int_{\Omega} t(\bx) \sum_{m=1}^{C_\Omega} (\bu_m -\bv_m )\delta_{\bc_m}(\intd \bx)\right\}\\
    = & \sup_{\mathrm{Lip}(t) \le 1} \left\{ \sum_{m=1}^{C_\Omega}  t(\bc_m)  (\bu_m -\bv_m )\right\}\\
    \end{aligned}
    \]
    Leting $\bw^+_m = \max\{\bu_m - \bv_m,0\}$ and $\bw^-_m=-\min\{\bu_m-\bv_m,0\}$ yields: $$
    W_1(\mu_\omega, \nu_\omega) = \sup_{\mathrm{Lip}(t) \le 1} \left\{ \sum_{m=1}^{C_\Omega}  t(\bc_m) \bw^+_m-t(\bc_m) \bw^-_m\right\}.
    $$
    Next, define $M = \sum_{m=1}^{C_\Omega} \bw^+_m = \sum_{m=1}^{C_\Omega} \bw^-_m = \frac{1}{2} |\bu - \bv|_1$ and thus the following bound holds:
    \[
    \begin{aligned}
        W_1(\mu_\omega, \nu_\omega) \leq & \sup_{\mathrm{Lip}(t) \le 1} \left\{ \sum_{m=1}^{C_\Omega}  t_{\max} \bw^+_m-t_{\min} \bw^-_m\right\}\\
    \leq &  M (t_{\max}-t_{\min})\leq 2MR\sqrt{d}= 2R\sqrt{d}|\bu-\bv|_1\leq 2R\sqrt{{C_\Omega}d}|\bu-\bv|_2 .
    \end{aligned}
    \]
Therefore, we have: 
\[
\begin{aligned}
|g(\bx,\bu)-g(\by,\bv)| \leq &  L_f\left(|\bx-\by|_2 + 2R\sqrt{{C_\Omega}d}\|\bu-\bv\|_2   \right)\\
\leq & 2R\sqrt{{C_\Omega}d}L_f \sqrt{|\bx-\by|_2^2 +|\bu-\bv|^2_2 }
\end{aligned}
\]
Therefore, by Theorem \ref{theorem:Lip_depend_scaleing}, for $\epsilon>0$, we set
$
H = N^{d_1},
$
where $N =   \lfloor \frac{4\sqrt{d_1}L_U\gamma_1 }{\epsilon}\rfloor +1 $ and $L_U = 2R\sqrt{{C_\Omega}d}L_f, d_1=d+C_\Omega$, $\gamma_1 = \max\{1,R\}$.
Thus: 
\[
H \;\lesssim\; \left(\frac{C\,\gamma_1\,\sqrt{d+C_\Omega}\,R\sqrt{C_\Omega d}\,L_f}{\epsilon}\right)^{d+C_\Omega}\leq C(C_\Omega+d)^{\frac{C_\Omega+d}{2}}(C_\Omega d)^{\frac{C_\Omega+d}{2}}\epsilon^{-C_\Omega-d}.
\]
where $C$ depending on $R,L_f$, then there exist $H$ neural networks $\{q_k\}_{k=1}^H \in \mathcal F_{NN}(d+C_\Omega,1,L,p,K,\kappa,R)  $ with \[
\begin{aligned}
L  & = O((d+C_\Omega)^2 \log(\epsilon^{-1}) + (d+C_\Omega)^2 \sqrt{{C_\Omega}d}L_f + (d+C_\Omega)^2 \log(d+C_\Omega)))\\
p &= O(1)\\
K &= O((d+C_\Omega)^2 \log(\epsilon^{-1}) + (d+C_\Omega)^2\sqrt{{C_\Omega}d}L_f + (d+C_\Omega) ^2 \log(d+C_\Omega) )\\
\kappa &=  O\left((d+C_\Omega)^{\frac{d+C_\Omega}{2}+1}\epsilon^{-d-C_\Omega-1}L_h^{d+C_\Omega}\right)\\
R & = O(1),
\end{aligned}
\] such that 
\[
\sup_{\mu\in U ,\bx\in \Omega }\left|g(\bx,\bu)-\sum_{k}^H a_k q_k(\bx,\bu)\right|\leq \frac{\epsilon}{2},
\]
where $a_k$ are constant dependent on $f$. We have for any $\mu\in \mathcal P_2(\Omega)$ 
\[
\begin{aligned}
    \sup_{\mu\in U ,\bx\in \Omega }\left|f(\bx,\mu) - \sum_{k}^H a_k q_k(\bx,\bu)\right|\leq &\sup_{\mu\in U ,\bx\in \Omega }\left|f(\bx,\mu) - g(\bx,\bu)\right|+ \sup_{\mu\in U ,\bx\in \Omega }\left|g(\bx,\bu)-\sum_{k}^H a_k q_k(\bx,\bu)\right|\\
= & \sup_{\mu}\left|f(\bx,\mu) - f(\bx,\mu_\omega)\right|+ \sup_{\mu}\left|g(\bx,\bu)-\sum_{k}^H a_k q_k(\bx,\bu)\right|\\
\leq & \epsilon
\end{aligned}
\]
\end{proof}

\section{Proof of Theorem \ref{thm:mvnn-approximation}}

\begin{proof}
    By Assumption~\ref{assum:finite-dimensional} and Remark~2, there exists an $L_G$-Lipschitz function 
    \(
        G : \mathbb{R}^d \times \mathbb{R}^r \to \mathbb{R}^d
    \)
    such that
    \(
        \bb^\star(\bX, \mu) = G(\bX, \langle \bg, \mu \rangle),
    \)
    where $\bg = (g_1,\dots,g_r)$ is the vector of $L_{\bg}$-Lipschitz feature functions.
    Since $\bg$ is Lipschitz and the domain $[0,1]^d$ is compact, each $g_j$ is bounded. Let:
    \[
        M_{\bg} := \sup_{\bY \in [0,1]^d} \|\bg(\bY)\|_{\infty} < \infty,
    \]
    then, for any probability measure $\mu$ supported on $[0,1]^d$:
    \[
        \|\langle \bg, \mu \rangle\|_{\infty} 
        \le \int \|\bg(\bY)\|_{\infty} \,\mu(d\bY) 
        \le M_{\bg}.
    \]
    Our goal is to construct an MVNN of the form:
    \(        \bb_\theta(\bX, \mu) = \varphi_{\mathrm{int}}\bigl(\bX, \langle \varphi_{\mathrm{e}}, \mu \rangle\bigr)
    \)
    such that:
    \[
        \sup_{\bX \in [0,1]^d,\,\mu \in U} 
        \bigl\|\bb^\star(\bX,\mu) - \bb_\theta (\bX,\mu)\bigr\|_{\mathbb{R}^d} \le \epsilon.
    \]
    For brevity, denote
    \(  \bz_{\bg}(\mu) := \langle \bg, \mu \rangle \in \mathbb{R}^r,
        \bz_{\mathrm{e}}(\mu) := \langle \varphi_{\mathrm{e}}^{(1:r)}, \mu \rangle \in \mathbb{R}^r,
    \)  where $\varphi_{\mathrm{e}}^{(1:r)}$ denotes the first $r$ coordinates of $\varphi_{\mathrm{e}}$.
    We decompose the total error as:
    \[
    \begin{aligned}
    &\sup_{\bX,\mu} 
           \bigl\|G(\bX, \langle \bg, \mu \rangle) 
                 - \varphi_{\mathrm{i}}(\bX,  \langle \varphi_{\mathrm{e}}^{(1:r)}, \mu \rangle)\bigr\|_{\mathbb{R}^d} \\
        &\le \sup_{\bX,\mu} 
           \bigl\|G(\bX, \langle \bg, \mu \rangle) 
                 - G(\bX,  \langle \varphi_{\mathrm{e}}^{(1:r)}, \mu \rangle)\bigr\|_{\mathbb{R}^d} \\
        &\quad + \sup_{\bX,\mu} 
           \bigl\|G(\bX,  \langle \varphi_{\mathrm{e}}^{(1:r)}, \mu \rangle) 
                 - \varphi_{\mathrm{i}}(\bX,  \langle \varphi_{\mathrm{e}}^{(1:r)}, \mu \rangle)\bigr\|_{\mathbb{R}^d} 
    \end{aligned}
    \]
    For the first term, since $G$ is $L_G$-Lipschitz in its second argument, we have:
    \[
    \begin{aligned}
        & \sup_{\bX,\mu} 
           \bigl\|G(\bX, \langle \bg, \mu \rangle) 
                 - G(\bX,  \langle \varphi_{\mathrm{e}}^{(1:r)}, \mu \rangle)\bigr\|_{\mathbb{R}^d}\\
                 \le &
        L_G \sup_{\mu \in U} 
        \bigl\| \langle \bg, \mu \rangle  - \langle \varphi_{\mathrm{e}}^{(1:r)}, \mu \rangle \bigr\|_{\mathbb{R}^r}\\ 
        =& L_G \left\| \int \bigl(\bg(\bY) - \varphi_{\mathrm{e}}^{(1:r)}(\bY)\bigr) \,\mu(d\bY)\right\|_{\mathbb{R}^r} \\
        \le& L_G\int \bigl\|\bg(\bY) - \varphi_{\mathrm{e}}^{(1:r)}(\bY)\bigr\|_{\mathbb{R}^r} \,\mu(d\bY) \\
        \le & L_G \sup_{\bY \in [0,1]^d} 
            \bigl\|\bg(\bY) - \varphi_{\mathrm{e}}^{(1:r)}(\bY)\bigr\|_{\mathbb{R}^r}.
        \end{aligned}
    \]
    We will enforce:
    \[
        \sup_{\bY \in [0,1]^d} 
        \bigl\|\bg(\bY) - \varphi_{\mathrm{e}}^{(1:r)}(\bY)\bigr\|_{\mathbb{R}^r}
        \le \epsilon_{\mathrm{e}} := \frac{\epsilon}{2 L_G},
    \]
    so that \begin{equation}\label{equ:e1}
    \sup_{\bX,\mu} 
           \bigl\|G(\bX, \langle \bg, \mu \rangle) 
                 - G(\bX,  \langle \varphi_{\mathrm{e}}^{(1:r)}, \mu \rangle)\bigr\|_{\mathbb{R}^d} \le \epsilon/2.
                 \end{equation}
                 
     By Theorem~1 in \cite{yarotsky2017error}, for any $\epsilon_{\mathrm{e}} > 0$, there exists a deep ReLU network 
    \[
        \varphi_{\mathrm{e}} : \mathbb{R}^d \to \mathbb{R}^k, \quad k \ge r,
    \]
    has the depth ast most $O(\log(\epsilon_{\mathrm{e}}^{-1})) = O(\log(\epsilon^{-1}))$ and width at most $O\bigl(r \cdot \epsilon_{\mathrm{e}}^{-d} \bigr)
        = O\bigl(r \cdot \epsilon^{-d}\bigr)$ and the total number of parameters at most $O(r \cdot  \epsilon^{-d} \log(\epsilon^{-1}) )$, such that
    where the constant depends on $L_G,d$.
    For the second term, consider the compact domain
    \[
        \mathcal{D} := [0,1]^d \times [-R,R]^r \subset \mathbb{R}^{d+r}.
    \] where \( R := M_{\bg} + \epsilon_{\mathrm{e}}\)
    with both $\langle \bg, \mu \rangle$ and $\langle \varphi_{\mathrm{e}}^{(1:r)}, \mu \rangle$ lie in the box $[-R,R]^r$ for all $\mu \in U$. Again by Theorem~1 in \cite{yarotsky2017error}, for any $\epsilon_{\mathrm{i}} > 0$ there exists a deep ReLU network:
    \[
        \varphi_{\mathrm{i}} : \mathbb{R}^{d+r} \to \mathbb{R}^d
    \]
    with depth $L_{\mathrm{int}}$ and width $W_{\mathrm{int}}$ at most:
      \[
        L_{\mathrm{int}} =  O(\log(\epsilon^{-1})),
        \qquad
        W_{\mathrm{int}} =  O\bigl(d\cdot\epsilon^{-(d+r)} \bigr).
    \]
    such that:
     \begin{equation}\label{equ:e2}
        \sup_{(\bX,\bz) \in \mathcal{D}} 
        \bigl\|G(\bX,\bz) - \varphi_{\mathrm{i}}(\bX,\bz)\bigr\|_{\mathbb{R}^d}
        \le \epsilon/2.
    \end{equation}
  
Combining the two estimates in Equation \eqref{equ:e1}  and \eqref{equ:e2} gives
\[
\sup_{X\in[0,1]^d,\ \mu\in U}
\|\bb^\star(\bX,\mu)-\bb_\theta(\bX,\mu)\|_{\mathbb{R}^d}
\le \frac{\varepsilon}{2}+\frac{\varepsilon}{2}
=\varepsilon.
\]
This completes the proof.
\end{proof}
\end{document}